\newcommand{\cL}{{\mathcal L}}
\newcommand{\jbx}{\langle \alpha\rangle}
\newtheorem{theorem}{Theorem}[section]
\newtheorem{lemma}[theorem]{Lemma}
\newtheorem{proposition}[theorem]{Proposition}
\newtheorem{corollary}[theorem]{Corollary}
\theoremstyle{remark}
\newtheorem{remark}{Remark}
\numberwithin{equation}{section}
\numberwithin{figure}{section}
\title{Two dimensional solitary water waves with constant vorticity, Part I: the deep gravity case }
\author{James Rowan}
\address{Department of Mathematics, University of California at Berkeley}
\curraddr{}
\email{jrowan@math.berkeley.edu}
\author{Lizhe Wan}
\address{Department of Mathematics, University of Wisconsin - Madison}
\curraddr{}
\email{lwan33@wisc.edu}
\keywords{solitary waves, constant vorticity, maximal height condition.}
\subjclass[2020]{76B15, 35Q35}
\pgfplotsset{compat=1.17}
\begin{document}
\maketitle

\begin{abstract}
  We consider the two dimensional pure gravity water waves with nonzero constant vorticity in infinite depth, working in the holomorphic coordinates introduced in~\cite{hunter2016two}.
  We show that close to the critical velocity corresponding to zero frequency, a solitary wave exists.
  We use a fixed point argument to construct the solitary wave whose profile resembles a rescaled Benjamin-Ono soliton.
  The solitary wave is smooth and has an asymptotic expansion in terms of powers of the Benjamin-Ono soliton.
\end{abstract}

\section{Introduction}

We consider two dimension gravity water waves with nonzero constant vorticity and infinite depth, but without surface tension or viscosity.
The fluid occupies a time dependent domain $\Omega (t) \subset \mathbb{R}^2$ with infinite depth and a free upper boundary $\Gamma(t)$ which is asymptotically approaching $y=0$.
Denoting the fluid velocity by $\mathbf{u}(t,x,y) = (u(t,x,y), v(t,x,y))$, the pressure by $p(t,x,y)$, and the constant vorticity by $\gamma \neq 0$, the equations inside $\Omega_t$ are
\begin{equation*}
\left\{
             \begin{array}{lr}
            u_t +uu_x +vu_y = -p_x &  \\
            v_t + uv_x +vv_y = -p_y -g& \\
            u_x +v_y =0 & \\
            \omega = u_y -v_x = -\gamma.
             \end{array}
\right.
\end{equation*}
On the boundary $\Gamma_t$ we have the dynamic boundary condition
\begin{equation*}
    p =0,
\end{equation*}
and the kinematic boundary condition
\begin{equation*}
    \partial_t +\mathbf{u}\cdot \nabla \text{ is tangent to }\Gamma_t.
\end{equation*}
Here $g>0$ is the gravitational constant. 

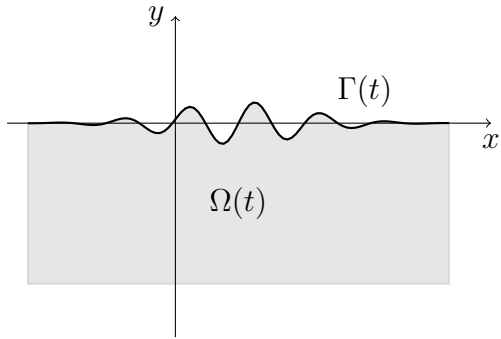
\begin{figure}
  \centering

  \begin{tikzpicture}
    \begin{axis}[ xmin=-12, xmax=12.5, ymin=-2.5, ymax=1.5, axis x
      line = none, axis y line = none, samples=100 ]

      \addplot+[mark=none,domain=-10:10,stack plots=y]
      {0.2*sin(deg(2*x))*exp(-x^2/20)};
      \addplot+[mark=none,fill=gray!20!white,draw=gray!30,thick,domain=-10:10,stack
      plots=y] {-1.5-0.2*sin(deg(2*x))*exp(-x^2/20)} \closedcycle;
      \addplot+[black, thick,mark=none,domain=-10:10,stack plots=y]
      {1.5+0.2*sin(deg(2*x))*exp(-x^2/20)}; 

      \draw[->] (axis cs:-3,-2) -- (axis cs:-3,1) node[left] {\(y\)};
      \draw[->] (axis cs:-11,0) -- (axis cs:12,0) node[below] {\(x\)};
      \filldraw (axis cs:-3,-1.5)  node[above left]
      {\(\)}; \node at (axis cs:0,-0.75) {\(\Omega(t)\)}; \node at
      (axis cs:6,0.3) {\(\Gamma(t)\)};

    \end{axis}
  \end{tikzpicture}
  \caption{The fluid domain.}
\end{figure}

In \cite{MR4462478} and\cite{MR3869381}, the above system was expressed in holomorphic position/velocity potential coordinates $(W,Q)$, and the water wave equations with constant vorticity have the following form:
\begin{equation}
\left\{
             \begin{array}{lr}
             W_t + (W_\alpha +1)\underline{F} +i\dfrac{\gamma}{2}W = 0 &  \\
             Q_t - igW +\underline{F}Q_\alpha +i\gamma Q +\mathbf{P}\left[\dfrac{|Q_\alpha|^2}{J}\right]- i\dfrac{\gamma}{2}T_1 =0,&  
             \end{array}
\right.\label{e:CVWW}
\end{equation}
where $J := |1+ W_\alpha|^2$ and $\mathbf{P}$ is the projection onto negative frequencies, namely
\begin{equation*}
    \mathbf{P} = \frac{1}{2}(\mathbf{I} - iH),
\end{equation*}
with $H$ denoting the Hilbert transform, and
\begin{equation*}
\begin{aligned}
&F: = \mathbf{P}\left[\frac{Q_\alpha - \Bar{Q}_\alpha}{J}\right], \quad &F_1 = \mathbf{P}\left[\frac{W}{1+\Bar{W}_\alpha}+\frac{\Bar{W}}{1+W_\alpha}\right],\\
&\underline{F}: =F- i \frac{\gamma}{2}F_1,  \quad &T_1: = \mathbf{P}\left[\frac{W\Bar{Q}_\alpha}{1+\Bar{W}_\alpha}-\frac{\Bar{W}Q_\alpha}{1+W_\alpha}\right].
\end{aligned}
\end{equation*}

We seek to find solitary wave solutions of~\eqref{e:CVWW}, in other words nonzero solutions having the form 
\begin{equation*}
    \left(W(\alpha, t), Q(\alpha, t)\right) = \left(W(\alpha + ct), Q(\alpha + ct)\right)
\end{equation*}
for $t\geq 0$.
Here we assume that $\lim_{\alpha\to\infty}W(\alpha)=0$, so that we only consider localized solitary waves and not periodic traveling waves, but both the solitary and periodic traveling waves problems have been widely studied.

The presence or absence of physical parameters such as gravity, surface tension, (infinite) depth, and vorticity can lead to or prevent the formation of solitary waves.
Beginning with the pioneering results in \cite{MR65317, MR445136,MR629699}, solitary waves are known to exist in finite depth for both two and higher dimensions. 
Gravity and gravity-capillary water waves have shown on a vast number of literature, including \cite{MR963906, MR1720395, MR1378603, MR2867413, MR1133302, MR2263898, MR2073504, MR2069635, MR4412559,MR4246394, MR3780138, MR3461361}.
As for the existence of solitary waves for water waves with constant vorticity, most results are on the gravity-capillary case, especially in finite depth, see for instance \cite{MR3961580, MR3415532, MR1364730, MR1297863}.

\par The mathematical theory of periodic travelling waves is rich and well-developed for different types of waves.
Steady periodic water waves with vorticity were considered in for instance \cite{MR2027299,MR2835865, MR2969824, MR2754336}.
It was shown that for  steady periodic gravity water waves with vorticity, steady periodic surface water waves with vorticity, and steady periodic deep water waves with vorticity, the solutions are symmetric, see \cite{MR2362244,MR2256915,MR2144685, MR2329142}.
Furthermore, for rotational travelling water waves, the solution is even analytic, see \cite{MR2754336,MR2753609,MR2902476}.

A variety of different coordinate systems have been used to study the existence of solitary waves; we mention two other works that also use holomorphic coordinates.
In the case of zero vorticity $\gamma = 0$ and infinite depth, Ifrim and Tataru showed in \cite{MR4151415} that there are no nontrivial solitary waves.
Then using a similar method in \cite{MR4455193}, Ifrim-Pineau-Tataru-Taylor showed that  no pure capillary solitary waves exist in finite depth.

The authors, together with Ifrim and Tataru showed in \cite{MR4462478} that for solutions concentrated at the amplitude and frequency scale $\epsilon$,  a rescaling of the Benjamin-Ono equation
\begin{equation}
    (\partial_t+\partial_x|D|)u + uu_x =0 \label{BenjaminOno}
\end{equation}
gives a good and stable approximation to the system \eqref{e:CVWW} on the cubic time scale $\left[0,T\epsilon^{-2}\right]$ where well-posedness was proved in~\cite{MR3869381}.
Here $|D|$ is a Fourier multiplier defined in \eqref{DDefinition}.

The approximation can be heuristically justified on the linear level by looking at the linearization of~\eqref{e:CVWW} around the zero solution,
\begin{equation*}
\left\{
             \begin{array}{lr}
             w_t + q_\alpha = 0 &  \\
             q_t - igw  +i\gamma q =0,&  
             \end{array}
\right.\label{e:CVWWlinearized}
\end{equation*}
as well as its dispersion relation
\begin{equation*}
    \tau^2+\gamma\tau+g\xi=0.
\end{equation*}
Taking a quadratic approximation for the top branch of the dispersion relation (recall that $\xi\le 0$ in the holomorphic coordinate setting), we have
\begin{equation*}
    \tau=-\frac{g}{\gamma}\xi-\frac{g^2}{\gamma^3}\xi^2,
\end{equation*}
so that $-\frac{g}{\gamma}$ corresponds to $0$ frequency.
The velocity $-\frac g \gamma$ will be the critical velocity in the sequel.

It is well known that the Benjamin-Ono equation has the unique single 
soliton solutions
\begin{equation}
    u (t,c) =c\rho(c(x-ct)), \quad \mbox{ where } \rho(x)=\frac{4}{1+x^2}, \label{e:rhodefinition}
\end{equation}
and $c$ is the velocity of the waves in the horizontal direction.
By the result in \cite{MR4462478}, the system \eqref{e:CVWW} has solitary-wave-like solutions on a cubic time scale $[0,T\epsilon^{-2}]$.
A natural question then arises, are there exact solitary waves solutions of \eqref{e:CVWW} close to (after a suitable series of rescalings) the Benjamin-Ono soliton?
We give an affirmative answer to this question.

Instead of working with the solitary wave ansatz in \eqref{e:CVWW} in the complex setting,
we derive and then work with a real-valued differential equation, the \emph{Babenko equation}, for $U: = \Im W$:
\begin{align}
    (g+ c\gamma-c^2|D|)U = - \frac{\gamma^2}{2}U^2-gU|D|U -\frac{g}{2}|D|U^2 +\frac{\gamma^2}{2}\left(U|D|U^2 - U^2|D|U - \frac{1}{3}|D|U^3\right). \label{e:BabenkoEqnLR}
\end{align}
Historically,  Babenko studied periodic traveling waves~\cite{MR899856, MR898306}, but the approach of finding an elliptic equation solved by the solitary wave profile works in the nonperiodic case as well.
The existence of a solitary wave for \eqref{e:CVWW} is equivalent to the existence of a nontrivial solution of the equation \eqref{e:BabenkoEqnLR}, as we will show in Section~\ref{s:Babenko}.

\par Our first result is that under some suitable conditions, the equation \eqref{e:BabenkoEqnLR} is an elliptic equation, so that the solutions enjoy
good Sobolev regularity. 
\begin{theorem}
Assume that \eqref{e:BabenkoEqnLR} has a solution $\Im W\in H^2$. If $g+c\gamma <0$ and $2g \sup\Im W < c^2$, the solution of \eqref{e:BabenkoEqnLR} is in $\cap_{k=2}^\infty H^k$. \label{t:TheoremTwo}
\end{theorem}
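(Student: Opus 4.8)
The plan is to run a bootstrap argument: starting from $U := \Im W \in H^2$, I would show that the equation \eqref{e:BabenkoEqnLR} forces $U \in H^3$, then $H^4$, and so on, with each gain of one derivative coming from inverting the linear operator on the left-hand side. First I would define $L := g + c\gamma - c^2 |D|$ and observe that its symbol is $g + c\gamma - c^2|\xi|$. Under the hypothesis $g + c\gamma < 0$ the symbol is strictly negative for all $\xi$ (it equals $g+c\gamma < 0$ at $\xi = 0$ and decreases as $|\xi|$ grows), so $L$ is invertible with $L^{-1}$ a Fourier multiplier of order $-1$: more precisely $|g+c\gamma - c^2|\xi||^{-1} \lesssim \langle \xi \rangle^{-1}$, so $L^{-1} : H^s \to H^{s+1}$ is bounded for every $s$. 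Rewrite \eqref{e:BabenkoEqnLR} as
\begin{equation*}
  U = L^{-1} N(U),
\end{equation*}
where $N(U)$ collects all the quadratic and cubic terms on the right, each of which is a product of factors of the form $U$, $|D|U$, $|D|U^2$, $|D|U^3$, etc.

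The main step is the inductive claim: if $U \in H^k$ for some $k \ge 2$, then $N(U) \in H^{k-1}$, whence $U = L^{-1}N(U) \in H^k$... which is not yet a gain. The point is that $N(U)$ is actually one derivative better than the naive count suggests once we use that $H^k$ for $k \ge 2$ is an algebra in one dimension (embedding $H^k \hookrightarrow L^\infty$ and the Moser/Kato–Ponce product estimates). Indeed, terms like $U^2$, $U|D|U$, $|D|U^2$, $U|D|U^2$, $U^2|D|U$, $|D|U^3$ all lie in $H^{k-1}$ when $U \in H^k$, $k\ge 2$ (the worst terms carry one factor of $|D|$, which costs one derivative, so $H^k \to H^{k-1}$). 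Therefore $N(U) \in H^{k-1}$ and $L^{-1}N(U) \in H^{k}$, giving $U \in H^k$ — consistent, but to bootstrap we need to extract an extra derivative. The resolution is that the highest-order contribution on the right is of the form $c^{-2}|D|$ applied to quadratic expressions, and more carefully: moving the term $-g U |D| U - \tfrac g2 |D| U^2$ and the cubic $|D|$-terms around, one sees that $N(U)$ modulo lower order equals a sum of $|D|$ of products, and $L^{-1}|D|$ is a multiplier of order $0$ (bounded on every $H^s$), not order $-1$. So in fact the correct statement is $L^{-1} N(U) \in H^{k}$ gives no loss, and the genuine gain comes from: if $U \in H^k$ then the quadratic products like $U^2$, $U|D|U$ are in $H^{k-1}$ but $|D|$ of them is in $H^{k-2}$ — wait, that loses. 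The clean way, which I would adopt, is to treat the highest-frequency term $-\tfrac g2 |D| U^2$ together with $L$: write \eqref{e:BabenkoEqnLR} as $(g + c\gamma - c^2|D| + \tfrac g2 |D| \cdot (U \,\cdot))U = \ldots$, i.e. absorb the quasilinear top-order term, and use the hypothesis $2 g \sup U < c^2$ to guarantee that the resulting operator $g + c\gamma - (c^2 - g U)|D|$ — with variable but strictly positive coefficient $c^2 - gU \ge c^2 - 2g\sup U \cdot(\tfrac12) > 0$ wait, more precisely $c^2 - g U > c^2 - g \sup U > c^2 - \tfrac{c^2}{2} = \tfrac{c^2}{2} > 0$ using $2g\sup U < c^2$ — remains uniformly elliptic of order $1$. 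This variable-coefficient first-order elliptic (nonlocal) operator is invertible on $H^s$ into $H^{s+1}$ by the standard paradifferential/symbol calculus (its symbol $g + c\gamma - (c^2 - gU(\alpha))|\xi|$ is $\lesssim -\langle \xi\rangle$ uniformly), and the remaining right-hand side $-\tfrac{\gamma^2}{2}U^2 - g U|D|U + (\text{cubic terms})$ is genuinely one order below, i.e. in $H^{k-1}$ when $U \in H^k$; hence $U \in H^{k+1}$ and induction closes.

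The hard part will be making the variable-coefficient elliptic inversion rigorous: one must check that the hypothesis $2g\sup U < c^2$ (which I would derive from the $H^2 \hookrightarrow L^\infty$ bound, noting $\sup U$ is controlled) indeed yields a uniformly elliptic symbol, and that the nonlocal operator $g + c\gamma - (c^2 - gU)|D|$ — which is \emph{not} a classical pseudodifferential operator because $|D|$ has a non-smooth symbol at $\xi = 0$, and because the coefficient depends on $\alpha$ — can still be inverted with a gain of one derivative on Sobolev spaces. I would handle this by a paraproduct decomposition (Bony linearization): the paradifferential part $T_{c^2 - gU}|D|$ has a symbol in the right class (the low-frequency cutoff in the paraproduct tames the $\xi = 0$ singularity of $|D|$), so it admits a parametrix gaining one derivative, while the remainder terms $U|D|U - T_U |D| U$ etc. are smoothing by one derivative relative to the naive count (again using $U \in H^k$, $k \ge 2$, and bilinear estimates). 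Once the parametrix is in hand, the bootstrap $H^k \Rightarrow H^{k+1}$ is immediate and we conclude $U \in \bigcap_{k \ge 2} H^k$. A secondary technical point is to verify the cubic terms $U|D|U^2 - U^2|D|U - \tfrac13 |D|U^3$ really are of lower order — here one uses that each is a product of \emph{three} factors, at most one of which carries a $|D|$, so it lies in $H^{k-1}$ whenever $U \in H^k \cap L^\infty$, with $k \ge 2$; this is a routine Moser estimate that I would not expand in detail.
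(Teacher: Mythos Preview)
Your overall strategy---absorb the quasilinear top-order piece into a variable-coefficient elliptic operator, use a paradifferential decomposition to show the remainder is lower order, then bootstrap---is exactly the paper's approach. But the bookkeeping is off in two places, and as written the induction does not close.

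First, you must absorb \emph{both} quadratic $|D|$-terms, not just $\tfrac{g}{2}|D|U^2$. At the paradifferential level, $gU|D|U$ contributes a leading term $gT_U|D|U$ and $\tfrac{g}{2}|D|U^2$ contributes another $gT_U|D|U$ (via $|D|(T_UU) = T_U|D|U + [\,|D|,T_U]U$), for a combined principal part $2gT_U|D|U$. The correct operator on the left is therefore $(g+c\gamma) - (c^2-2gU)|D|$, not $(c^2-gU)|D|$; this is precisely why the hypothesis reads $2g\sup U<c^2$, not $g\sup U<c^2$. With your coefficient $c^2-gU$, the term $-gU|D|U$ you leave on the right still carries a principal piece $gT_U|D|U$ of order $1$, so the right-hand side is only $H^{k-1}$, and inverting an order-$1$ elliptic operator on $H^{k-1}$ data returns $U\in H^k$---no gain. (Your sentence ``RHS in $H^{k-1}$, hence $U\in H^{k+1}$'' is off by one derivative.) To gain, you need the right-hand side in $H^k$, which requires absorbing all order-$1$ contributions.

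Second, the cubic block $U|D|U^2 - U^2|D|U - \tfrac{1}{3}|D|U^3$ is not merely ``in $H^{k-1}$''---that would again stall the bootstrap. It has a null-form cancellation: its combined principal paradifferential symbol vanishes (heuristically, $2U^2|D|U - U^2|D|U - U^2|D|U = 0$), so after paraproduct expansion it is genuinely in $H^k$. The paper makes this explicit by writing each cubic term with commutators like $[T_U,|D|]$ and high--high remainders $\Pi(\cdot,\cdot)$, then estimating each in $H^s$. You need to invoke this cancellation, not just the Moser product estimate.

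Once you fix the coefficient to $c^2-2gU$ and record the cubic cancellation, your argument coincides with the paper's Section~\ref{s:ellipticity}.
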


Note in particular that the sign of the vorticity $\gamma$ determines the sign of the possible velocities $c$.
If $\gamma>0$, then the velocity $c<-\frac g \gamma$, and if $\gamma<0$, then $c>-\frac{g}{\gamma}$.
In both cases, the solitary wave velociy $c$ is just outside the range of velocities for dispersive waves.

\par Theorem \ref{t:TheoremTwo} motivates us to find a nontrivial
$H^2$ solution of \eqref{e:BabenkoEqnLR} in order to gain better Sobolev regularity.
Our main result shows that such a nontrivial solution does exist for certain velocities:
\begin{theorem} 
Given any nonzero vorticity $\gamma$ and positive constant $g$, there exists a small positive constant $\epsilon_0$ such that for any velocity $c$ satisfying $-\epsilon_0 <g+c\gamma <0$, the system \eqref{e:CVWW} has a unique nontrivial $H^2$ solitary waves solution in the sense that $\Im W$ is close to the Benjamin-Ono soliton after rescaling and space translation; its imaginary part is an even function, and its real part is an odd function.  \label{t:TheoremOno}  
\end{theorem}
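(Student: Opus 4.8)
The plan is to recast the Babenko equation~\eqref{e:BabenkoEqnLR} as a small perturbation of the Benjamin--Ono soliton equation, solve the latter by a contraction argument in the class of even functions, transfer the conclusion back to~\eqref{e:CVWW} by the equivalence from Section~\ref{s:Babenko}, and bootstrap the regularity with Theorem~\ref{t:TheoremTwo}. Set $\mu := -(g+c\gamma)$, so that $\mu \in (0,\epsilon_0)$, and note $c = -(g+\mu)/\gamma$ is a smooth function of $\mu$ with $c(0) = -g/\gamma \neq 0$. The left side of~\eqref{e:BabenkoEqnLR} is then $-(\mu + c^2|D|)U$. I would look for a solution of the form $U(\alpha) = \lambda\, V(\beta\alpha)$ with $\beta = \mu/c^2$ and $\lambda = \mu/\gamma^2$; using that $|D|$ is homogeneous of degree one, so $|D|\bigl(V(\beta\cdot)\bigr) = \beta\,(|D|V)(\beta\cdot)$, a direct computation rewrites~\eqref{e:BabenkoEqnLR}, in the variable $z = \beta\alpha$, as
\begin{equation}
  (1+|D|)V = \tfrac12 V^2 + \mu\,\mathcal R(V,\mu), \label{e:rescaledBabenko}
\end{equation}
where $\mathcal R(\cdot,\mu)$ is built from the quadratic terms $V|D|V$, $|D|V^2$ and the cubic terms $V|D|V^2$, $V^2|D|V$, $|D|V^3$, with coefficients smooth and bounded in $\mu$ near $0$. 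Since $H^1(\mathbb R)$ is an algebra in one dimension and $|D|\colon H^2 \to H^1$ is bounded, $\mathcal R(\cdot,\mu)\colon H^2 \to H^1$ is bounded on bounded sets uniformly for small $\mu$, and at $\mu = 0$ equation~\eqref{e:rescaledBabenko} is exactly $(1+|D|)V = \tfrac12 V^2$, solved by the Benjamin--Ono profile $V_0 = \rho$ of~\eqref{e:rhodefinition}.

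Writing $V = \rho + h$ and subtracting the profile identity $(1+|D|)\rho = \tfrac12\rho^2$ turns~\eqref{e:rescaledBabenko} into
\begin{equation}
  \mathcal L h = \tfrac12 h^2 + \mu\,\mathcal R(\rho + h,\mu), \qquad \mathcal L := (1+|D|) - \rho. \label{e:perturbedProfile}
\end{equation}
The structural input I would invoke is the non-degeneracy of the Benjamin--Ono soliton: the kernel of $\mathcal L$ on $H^2(\mathbb R)$ is spanned by the odd function $\rho'$ (Amick--Toland, Weinstein). Since $(1+|D|)\colon H^2 \to H^1$ is an isomorphism and multiplication by the decaying function $\rho$ is compact from $H^2$ to $H^1$, the operator $\mathcal L$ is Fredholm of index zero on $H^2 \to H^1$; being symmetric on $L^2$ and commuting with the reflection $z \mapsto -z$, its kernel and cokernel both equal $\mathrm{span}\{\rho'\} \subset H^2_{\mathrm{odd}}$, so its restriction $\mathcal L\colon H^2_{\mathrm{even}} \to H^1_{\mathrm{even}}$ is an isomorphism.

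Applying $\mathcal L^{-1}$ to~\eqref{e:perturbedProfile} and using that $\rho$, the maps $h \mapsto h^2$ and $h \mapsto \mathcal R(\rho+h,\mu)$, and $\mathcal L^{-1}$ all preserve evenness, one obtains a map $\Phi_\mu\colon H^2_{\mathrm{even}} \to H^2_{\mathrm{even}}$ whose fixed points are the even solutions of~\eqref{e:rescaledBabenko} near $\rho$. The algebra and multiplier bounds give, on the ball of radius $\delta$, estimates of the form $\|\Phi_\mu(h_1) - \Phi_\mu(h_2)\|_{H^2} \le C(\mu + \delta)\|h_1 - h_2\|_{H^2}$ and $\|\Phi_\mu(0)\|_{H^2} \le C\mu$, so that fixing $\delta$ small and then $\epsilon_0$ small makes $\Phi_\mu$ a contraction of the ball into itself; its unique fixed point $h = h(\mu)$ has $\|h\|_{H^2} \le C\mu$, so $V = \rho + h \neq 0$, and $U = \lambda\,V(\beta\cdot) \in H^2$ is a nontrivial solution of~\eqref{e:BabenkoEqnLR} with $\|U - \lambda\rho(\beta\cdot)\|_{H^2}$ small, i.e.\ close after rescaling to the Benjamin--Ono soliton. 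By Section~\ref{s:Babenko} this gives a solitary wave of~\eqref{e:CVWW} with $\Im W = U$ even, and since $\Re W$ and $\Im W$ are Hilbert transforms of one another in the holomorphic setting, $\Re W$ is odd. As $g + c\gamma = -\mu < 0$ and $0 < \sup\Im W \le \lambda(\sup\rho + C\delta) = O(\mu) < c^2$ for $\epsilon_0$ small, Theorem~\ref{t:TheoremTwo} upgrades $\Im W$ to $\cap_{k=2}^\infty H^k$, hence to a smooth profile. Uniqueness up to translation follows from the same argument run with the Lyapunov--Schmidt orthogonality condition $h \perp \rho'$ in place of evenness: for each small $\mu$ there is a unique small $h \perp \rho'$, the equation~\eqref{e:rescaledBabenko} is translation invariant, and reflection symmetry forces the centered solution to coincide with the even one constructed above.

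The main obstacle is the invertibility step: the whole scheme rests on the non-degeneracy of the Benjamin--Ono soliton together with the Fredholm and compactness bookkeeping for $\mathcal L$, since this is precisely what converts~\eqref{e:rescaledBabenko} into a problem that a contraction mapping can handle. One must either cite the known non-degeneracy or reprove it in the present setting, and then check carefully that every term of $\mathcal R$ is genuinely $O(\mu)$ in the right topology uniformly as $\mu \to 0$ — routine, but sensitive to the way the rescaling $\beta \to 0$ interacts with the nonlocal operator $|D|$.
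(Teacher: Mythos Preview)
Your proposal is correct and follows essentially the same route as the paper: rescale the Babenko equation so that it becomes the Benjamin--Ono soliton equation plus an $O(\mu)$ perturbation, linearize around $\rho$, exploit that the kernel of $\mathcal L=(1+|D|)-\rho$ is spanned by the odd function $\rho'$, restrict to even functions, and run a contraction on a small ball in $H^2_e$. The only notable differences are that the paper normalizes $g=1$, $\gamma=-1$ before rescaling, invokes the explicit spectrum of $\mathcal L$ (from \cite{bennett1983solitary,kenig2009stability}) rather than your Fredholm/compactness argument to get invertibility on even functions, and carries out the fixed point in the slightly larger weighted space $X=\{u:\ u,\ \alpha u\in H^2_e\}$---the weight being there to feed into the asymptotic expansion of Theorem~\ref{TheoremThree}, not because it is needed for Theorem~\ref{t:TheoremOno} itself.
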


Here the even and odd functions should be regarded as the class of functions modulo space translations.
For example, we can define the $H^2_e(\mathbb{R})$, the $H^2$ space of even functions on $\mathbb{R}$ as
\begin{equation*}
    H^2_e(\mathbb{R}): = \{f\in H^2(\mathbb{R}): f(\alpha) = f(2\alpha_0 -\alpha) \text{  a.e. for some constant } \alpha_0\in\mathbb{R}\}.
\end{equation*}
Without loss of generality, we can simply let $\alpha_0 = 0$.

We will show in Section \ref{s:slowexist} that $\Im W$ and $\Im Q$ are even functions.
A key observation is that the Hilbert transform turns the even functions into odd functions.
Since $\Re W = H\Im W$, and $\Re Q = H\Im Q$,  we know that $\Re W$ and $\Re Q$ are both odd functions.
By the discussion of Section $6$ of \cite{MR4436142}, we know the the profile of the water waves surface $\eta$ in Zakharov-Craig-Sulem formulation is proportional to $\Im W$ in the holomorphic coordinates.
This shows that the profile of the solitary wave surface is an even function modulo space translations.

\begin{remark}
In a recent paper \cite{Lokharu22Amplitude}, Lokharu, Wahlén and Weber showed that for two-dimensional steady pure-gravity water waves with finite depth of positive constant vorticity $\gamma$,   $\sup \Im W - \inf \Im W <\frac{2g}{\gamma^2}$. 
Since $0$ is in the range of $\Im W$,
we have $\sup\Im W<\frac{2g}{\gamma^2}$, and $\inf \Im W > -\frac{2g}{\gamma^2}$.
Their result does not contradict our Theorem \ref{t:TheoremTwo}, since in our setting, $\left|c+\frac{g}{\gamma}\right|$ is small and positive, so that $\frac{2g}{\gamma^2}<\frac{2c^2}{g}$, which gives an upper bound on $\Im W$.
\end{remark}

\begin{remark}
We consider in this article the simple case where the solitary waves have a simple profile with velocity $c$.
However,  the simplified model Benjamin-Ono equation  \eqref{BenjaminOno} itself has $N$-soliton solutions,  with $N$ different velocities. 
It is natural to ask whether the system \eqref{e:CVWW} also has multi-solitary waves solutions, but this is beyond the scope of the present article here.
\end{remark}

Furthermore, we show that the profile of the solution can be expressed as a power series in the Benjamin-Ono soliton $\rho$ \eqref{e:rhodefinition}.

\begin{theorem}
Let $\epsilon : = \left|\frac{g}{\gamma}+c\right|$, then the profile of water waves surface $\Im W$ modulo translations has an asymptotic expansion in terms of $\rho$ of the form, $\forall N \geq 1$,
   \begin{equation}
        \Im W(\alpha)=\sum_{k=1}^{N} b_k\rho^k\left(\frac{\epsilon\alpha}{(1+\epsilon)^2}\right)+g_N,\label{e:ClaimedFormOne}
    \end{equation} 
where  the coefficients $b_k$ depend on $\epsilon$ and the remainder term $g_N$ decays faster than $\jbx^{-2N}$; 
in particular, $\Im W(\alpha)= O_{L^\infty}\left((1+\alpha^2)^{-1}\right)$ asymptotically. \label{TheoremThree}
\end{theorem}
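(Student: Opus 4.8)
The plan is to pass to the rescaled Babenko equation, show that its solution is \emph{polyhomogeneous} at infinity — it admits a genuine asymptotic expansion in the even inverse powers $\alpha^{-2},\alpha^{-4},\dots$ that may be differentiated term by term — and then re-expand that series in the triangular family $\{\rho^k\}$ to obtain \eqref{e:ClaimedFormOne}. First I would rescale: set $y=\epsilon\alpha/(1+\epsilon)^2$ and let $V(y)$ be the associated rescaling of $\Im W$, absorbing the amplitude factor (which is $O(\epsilon)$) into $V$, so that, as in Section~\ref{s:slowexist}, \eqref{e:BabenkoEqnLR} becomes
\begin{equation*}
  (I+|D|)V=\tfrac12V^2+\epsilon\,\mathcal N_\epsilon(V),
\end{equation*}
where $\mathcal N_\epsilon$ is a polynomial of degree at most $3$ in $V$ and $|D|V$ with coefficients analytic in $\epsilon$ near $0$. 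By Theorems~\ref{t:TheoremTwo} and~\ref{t:TheoremOno}, $V$ is smooth, even, and $\|V-\rho\|_{H^2}\lesssim\epsilon$. I would then work from the fixed-point form $V=(I+|D|)^{-1}\Phi$, $\Phi:=\tfrac12V^2+\epsilon\mathcal N_\epsilon(V)$.

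Next, let $\mathcal S$ be the space of smooth even functions $f$ admitting an asymptotic expansion $f(y)\sim\sum_{k\ge1}e_k(f)\,y^{-2k}$, in the sense that $\partial^j\big(f-\sum_{k=1}^{M}e_k(f)\,y^{-2k}\big)=O(\langle y\rangle^{-2M-2-j})$ for all $M,j$. This is a filtered algebra, preserved by $|D|$ and by $(I+|D|)^{-1}$: the Fourier transforms of elements of $\mathcal S$ are smooth off $\xi=0$ with rapid decay and, near $\xi=0$, of the form ($C^\infty$ even) plus a series of odd powers of $|\xi|$, and multiplication by the symbols $|\xi|$ and $(1+|\xi|)^{-1}=1-|\xi|+\cdots$ preserves this structure, with no logarithms ever produced. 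Crucially, the $|\xi|$-term of $(1+|\xi|)^{-1}$ forces a $y^{-2}$ tail into $(I+|D|)^{-1}\Phi$ (with coefficient $\propto\int\Phi$) even when $\Phi$ decays faster, so $(I+|D|)^{-1}$ lands in $\mathcal S$ with order exactly $-2$ — this is what reproduces the $\rho$-like leading profile. Granting that $V$ already decays with all derivatives like $\langle y\rangle^{-2}$, so that $V\in\mathcal S$, every term of $\Phi$ lies in $\mathcal S$, and therefore $V=(I+|D|)^{-1}\Phi\in\mathcal S$; thus $V(y)\sim\sum_{k\ge1}e_k(\epsilon)\,y^{-2k}$ term by term.

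For the conclusion, note that $\rho^k(y)=\big(\tfrac{4}{1+y^2}\big)^k=4^ky^{-2k}+O(y^{-2k-2})$, so $\{\rho,\dots,\rho^N\}$ spans, modulo $O(\langle y\rangle^{-2N-2})$, the same space of tails as $\{y^{-2},\dots,y^{-2N}\}$, via an invertible upper-triangular change of basis. Hence there are unique reals $b_1(\epsilon),\dots,b_N(\epsilon)$ with $g_N:=V-\sum_{k=1}^Nb_k\rho^k=O(\langle y\rangle^{-2N-2})$, and by the previous paragraph this $g_N$ and its derivatives decay faster than $\langle y\rangle^{-2N}$. Undoing the rescaling gives \eqref{e:ClaimedFormOne}, with the $O(\epsilon)$ amplitude absorbed into the $b_k$ and with $g_N$ the correspondingly rescaled remainder (still decaying like $\langle\alpha\rangle^{-2N-2}$); taking $N=1$ and using $\rho(\epsilon\alpha/(1+\epsilon)^2)=O_{L^\infty}((1+\alpha^2)^{-1})$ gives the final claim.

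The main obstacle is the hypothesis that starts the bootstrap: that $V$, together with all its derivatives, already decays like $\langle y\rangle^{-2}$. Because $|D|$ and $(I+|D|)^{-1}$ are nonlocal, this decay cannot be read off pointwise from the equation and has to be produced by a weighted contraction/bootstrap argument, exploiting that the convolution kernel of $(I+|D|)^{-1}$ itself decays like $\langle y\rangle^{-2}$ and that conjugating $|D|$ by $\langle y\rangle^{2}$ costs only lower-order commutator errors. If the existence proof behind Theorem~\ref{t:TheoremOno} is already carried out in a weighted Sobolev space with weight $\langle y\rangle^{2-}$, this decay is in hand and the remaining steps — the symbol calculus for $\mathcal S$ and the triangular matching against $\{\rho^k\}$ — are essentially bookkeeping.
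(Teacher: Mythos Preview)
Your approach is correct and runs closely parallel to the paper's, but with a different choice of expansion basis and one circularity in the write-up that you should repair.

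The paper also passes to the rescaled equation and runs an induction on the order of the expansion, but it works directly in powers of $\rho$ rather than in $y^{-2k}$. The key algebraic input (Lemma~\ref{l:structureofmodDrho}) is that $|D|\rho^k$ is itself a polynomial in $\rho$, proved by partial fractions and $H(\alpha\pm i)^{-j}=\mp i(\alpha\pm i)^{-j}$; this lets the paper keep the entire bootstrap inside the $\rho$-calculus. For $|D|$ and $(1+|D|)^{-1}$ acting on general even $f\in H^{k,2N}$, the paper expands the symbols $|\xi|$ and $(1+|\xi|)^{-1}$ in the basis $|\xi|^{j-1}e^{-|\xi|}$ (the Fourier transforms of $|D|^{j-1}\rho$, hence polynomials in $\rho$) plus a $C^{2N}$ remainder --- this is exactly your polyhomogeneous symbol observation, just written in the $\rho$-adapted basis rather than in bare odd powers of $|\xi|$. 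Your route (expand in $y^{-2k}$, then triangularly match to $\rho^k$ at the end) is cleaner conceptually; the paper's route avoids the final change of basis and gives the $\rho$-coefficients directly.

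Two points to tighten. First, the sentence ``Granting that $V$ already decays with all derivatives like $\langle y\rangle^{-2}$, so that $V\in\mathcal S$'' is circular: membership in $\mathcal S$ is the conclusion, not a consequence of $O(\langle y\rangle^{-2})$ decay. What you want is the induction on the truncation order $M$: the base case is $\partial^j V=O(\langle y\rangle^{-2-j})$, and one pass through $V\mapsto\Phi\mapsto(1+|D|)^{-1}\Phi$ upgrades an order-$(M-1)$ expansion to order $M$ (because $\Phi$ is at least quadratic, hence one order better, and $(1+|D|)^{-1}$ reinjects a single $\langle y\rangle^{-2}$ tail with coefficient $\propto\int\Phi$). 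The paper makes this induction explicit. Second, for the base case the paper does not rely solely on the weighted existence space $X$ (which only gives weight $\langle\alpha\rangle$): it upgrades to the pointwise $\langle\alpha\rangle^{-2}$ bound by convolving with the explicit fundamental solution $\Phi(\alpha)=\frac{1}{\pi}\int_0^\infty \frac{\tau e^{-\tau}}{\alpha^2+\tau^2}\,d\tau$ of $1+|D|$, which itself decays like $\alpha^{-2}$. Your last paragraph anticipates this correctly; just note that the weight in $X$ is $\langle y\rangle^{1}$, not $\langle y\rangle^{2-}$, and the extra decay comes from this kernel estimate.
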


A more precise functional framework and statement for this theorem will be given in Section~\ref{s:property}.

Finally we prove the following theorem, giving the characterization of $\Im W$ at or near the maximal speed:

\begin{theorem}\label{t:maximal}
    Let $\left(c_M,-\frac g \gamma\right)$ for $\gamma>0$ (or $\left(-\frac g \gamma,c_M\right)$ for $\gamma<0$) be the maximal interval of velocities for which a continuous one-parameter family of even solitary wave solutions to~\eqref{e:CVWW} extending the solutions of Theorem~\ref{t:TheoremOno} exists in a weighted Sobolev space $X$.
    Then one of the following must happen:
    \begin{enumerate}
        \item the interval of possible velocities is infinite, so that $c_M=-\infty$ for $\gamma>0$ (or $c_M=\infty$ for $\gamma<0$); 
        
        \item the norm of the solitary wave profiles in a weighted Sobolev space $X$ is increasing to infinity along a subsequence as $c$ approaches $c_M$; 

        \item the solitary wave is approaching the maximal height along a subsequence as $c$ approaches $c_M$;  

        \item an eigenvalue (other than the one corresponding to translation symmetry) for the linearization of~\eqref{e:BabenkoEqnLR} around the solitary wave is approaching $0$ along a subsequence as $c$ approaches $c_M$.
    \end{enumerate}
\end{theorem}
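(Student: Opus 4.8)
The plan is to run a standard continuation (global bifurcation) argument for the family of solitary waves produced by Theorem~\ref{t:TheoremOno}, and to show that the only obstructions to extending it are the four alternatives listed. First I would set up the solution operator: for each velocity $c$ in the admissible range (i.e.\ $g+c\gamma<0$), write the Babenko equation~\eqref{e:BabenkoEqnLR} as $\mathcal{F}(U,c)=0$ where $\mathcal{F}\colon X\times(-\infty,-g/\gamma)\to Y$ is a smooth map between the weighted Sobolev space $X$ (chosen so that the decay statement of Theorem~\ref{TheoremThree} is captured, e.g.\ a space with $\jbx^{2}$-type weights and two derivatives) and a suitable target space $Y$. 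One checks that $\mathcal{F}$ is real-analytic in $(U,c)$ on the set where the ellipticity condition $2g\sup U<c^2$ of Theorem~\ref{t:TheoremTwo} holds, using that the nonlinearity is polynomial and $|D|$ is bounded on the relevant weighted spaces after conjugation; Theorem~\ref{t:TheoremTwo} then guarantees that any $H^2$ solution upgrades to lie in $X$, so solutions of $\mathcal{F}=0$ are genuinely in the space where we want to continue.

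Next I would invoke the implicit function theorem along the branch. At the starting velocity the linearization $D_U\mathcal{F}(U_c,c)$ is a compact perturbation of the invertible elliptic operator $g+c\gamma-c^2|D|$, hence Fredholm of index zero; its kernel, by the symmetry reduction in Section~\ref{s:slowexist}, is spanned (in the full space, before restricting to even functions) only by the translation mode $U_c'$, which we quotient out by working in $X\cap H^2_e$. So on the even subspace the linearization is invertible for $c$ near the construction regime, and the solution continues to a maximal half-open interval of velocities, $\left(c_M,-g/\gamma\right)$ for $\gamma>0$ (resp.\ $\left(-g/\gamma,c_M\right)$ for $\gamma<0$), with $c_M\in[-\infty,-g/\gamma)$. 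This defines the maximal continuous one-parameter family in the statement.

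Finally I would analyze what can go wrong as $c\downarrow c_M$ (resp.\ $c\uparrow c_M$). If $c_M=-\infty$ (resp.\ $+\infty$) we are in case (1). Otherwise $c_M$ is finite; suppose for contradiction that none of (2), (3), (4) holds along any subsequence. Then along the whole approach the profiles stay bounded in $X$ (negation of (2)), stay bounded away from the maximal height $2g\sup U=c^2$ (negation of (3)), and the linearized operator $D_U\mathcal{F}(U_c,c)$ restricted to the even subspace stays boundedly invertible (negation of (4), together with the Fredholm-index-zero property so that no eigenvalue hitting $0$ means invertibility). Boundedness in $X$ plus the compactness of the embedding of the weighted space into $H^2$ (weights give compactness at spatial infinity, and elliptic regularity from Theorem~\ref{t:TheoremTwo} controls the high-frequency part) lets me extract a subsequence $U_{c_n}\to U_*$ in $H^2$, and the uniform ellipticity bound promotes this to convergence in $X$; passing to the limit in $\mathcal{F}=0$ gives a solution $(U_*,c_M)$ in $X$ that still satisfies the strict maximal-height inequality and at which the even linearization is invertible. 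The implicit function theorem at $(U_*,c_M)$ then extends the branch past $c_M$, contradicting maximality. Hence at least one of (1)–(4) must occur.

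The main obstacle I expect is the compactness step: solitary waves on $\mathbb{R}$ can lose compactness by translating to infinity or by concentration, so I need the weighted space $X$ to genuinely prevent the former (which is why the even-function normalization fixing $\alpha_0=0$ from Section~\ref{s:slowexist} is essential — it pins the center of mass) and the uniform ellipticity from the negation of (3), combined with elliptic estimates for $g+c\gamma-c^2|D|$, to prevent the latter. Making the choice of $X$ precise so that $\mathcal{F}$ maps into $Y$ smoothly, the embedding $X\hookrightarrow H^2$ is compact, and the limiting profile inherits the decay of Theorem~\ref{TheoremThree}, is the delicate bookkeeping; everything else is a routine packaging of the implicit function theorem plus a limiting argument.
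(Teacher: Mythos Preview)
Your approach is correct in spirit and shares the same core ingredient as the paper's --- namely, a local continuation result (your implicit function theorem step is the paper's Proposition~\ref{p:localExtension}, proved there by a fixed-point argument rather than IFT, but these are equivalent) --- yet the endgame is organized differently. You argue by extracting a limiting profile $U_*$ at $c_M$ via compactness, then applying IFT at $(U_*,c_M)$ to overshoot the endpoint. The paper instead never constructs a limit: Proposition~\ref{p:localExtension} gives a \emph{quantitative} extension radius $\epsilon'_c$ depending only on $\|U_c\|_X$, the gap $\frac{c^2}{2g}-\sup U_c$, and the smallest nontrivial eigenvalue $|\lambda_m|$ of $\cL_{U_c}$. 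If none of (2)--(4) occur along any subsequence, all three quantities are uniformly controlled on the whole interval, so $\epsilon'_c\ge\epsilon_0>0$ uniformly, and one can march past any finite $c_M$ --- contradiction.

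The payoff of the paper's route is that it sidesteps exactly the obstacle you flag: no compactness argument, no limiting profile, no worry about escape to spatial infinity or concentration. Your route is more in the style of global real-analytic bifurcation theory and would also work, but the compactness step you correctly identify as delicate (compact embedding of the weighted space, upgrading $H^2$ convergence to $X$ convergence, and checking the limit inherits the strict height inequality) is genuinely extra work that the quantitative version avoids entirely. If you want to streamline, replace your final paragraph with the observation that the local extension radius is bounded below whenever the three quantities are; that is the whole proof.
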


The article is organized as follows. In  Section \ref{s:Babenko} we use the variational approach to derive the Babenko equation \eqref{e:BabenkoEqnLR}, which is a nonlinear first order ODE that describes the profile of the solitary waves of \eqref{e:CVWW}. 
For the following sections, we shall be studying the properties of the Babenko equation \eqref{e:BabenkoEqnLR}.
In Section \ref{s:ellipticity}, we rewrite the Babenko equation \eqref{e:BabenkoEqnLR}, and show that under the \textit{maximal height condition}, the equation is a nonlinear first order elliptic equation.
With this ellipticity condition, we are able to show the solution which we construct in the following section have better Sobolev regularity.
In Section \ref{s:slowexist}, we rescale the Babenko equation, and use the contraction principle to prove the existence of the rescaled equation in the even functional spaces.
In Section \ref{s:property}, we consider the asymptotic behavior of the solitary waves solution and prove it has an $\jbx^{-2}$ spacial decay.
We then give a finer asymptotic decay of the solution profile and shows that it only involve the even powers of $\jbx^{-1}$, which is the result of Theorem \ref{TheoremThree}.
Finally in Section \ref{s:perturbation}, we briefly discuss continuation of the construction for velocities near one where a solitary wave exists, and give conditions in terms of that solitary wave and the linearized Babenko equation around it under which the range of velocities can be extended.

\textbf{Acknowledgments.} The authors would like to thank their Ph.D. advisors, Daniel Tataru and Mihaela Ifrim, for many helpful discussions and suggestions during the preparation of this paper.
The first author was partially supported by the NSF grant DMS-2054975 as well as the Simons Foundation (via Daniel Tataru's Simons Investigator Grant).

\section{Derivation of the Babenko equation} \label{s:Babenko}
 In this section, we derive the Babenko equation of the water waves system \eqref{e:CVWW}, which describes the profile of solitary waves.
 The system \eqref{e:CVWW} is expressed in holomorphic coordinates, and $W, Q$ are both holomorphic functions.
A very nice property of holomorphic functions is  that their real parts equal the Hilbert transform of their imaginary parts.
Therefore it suffices to consider just the imaginary parts.
According to the computation carried out in the appendix of \cite{MR3869381},
the total energy of \eqref{e:CVWW} is
\begin{equation}
    \mathcal{E} = \frac{1}{2}\int |D|\Im Q \cdot\Im Q +g(\Im W)^2(1+ |D|\Im W) + \gamma |D|\Im Q \cdot (\Im W)^2 + \frac{\gamma^2}{3}(\Im W)^3(1+|D|\Im W)\, d\alpha, \label{TotalEnergy}
\end{equation}
and the horizontal momentum is
\begin{equation}
    \mathcal{P} = -\int |D|\Im Q \Im W + \frac{\gamma}{2}(\Im W)^2(1+ |D|\Im W) \, d\alpha . \label{Momentum}
\end{equation}
Here, let $PV_{.}$ denotes principle value, the differential operator $|D|$ is defined by
\begin{equation*}
    |D|f(\alpha) = \partial_\alpha Hf(\alpha)  = PV_{.} \frac{1}{\pi}\int \frac{f(\alpha)-f(y)}{|\alpha-y|^2}\,dy.
\end{equation*}
An alternative definition of operator $|D|$ which we will use frequently is given via the Fourier transform
\begin{equation}
    \widehat{|D|f}(\xi) = |\xi|\hat{f}(\xi).\label{DDefinition}
\end{equation}
We derive the Babenko equation with the variational approach.
In \cite{MR688749}, it was observed by Benjamin and Olver that the solution of Babenko equation is characterised as a critical point of the total energy subject to the constraint of fixed momentum.
The velocity $c$ can be viewed as the corresponding Lagrange multiplier.
In order to compute the functional derivative, we will need the following result.
\begin{lemma}
   Given a function $f(x)$ defined on $\mathbb{R}^n$, let $L\left(f(x), \nabla f(x), |D|f(x)\right)$ to be a $C^2_0$ functional of $f$, $\nabla f(x)$, and $|D|f(x)$, and consider the functional defined by
   \begin{equation*}
       J(f) = \int L\left(f(x), \nabla f(x), |D|f(x)\right) dx.
   \end{equation*}
   Then the functional derivative of $J(f)$ is given by
   \begin{equation*}
       \frac{\delta J}{\delta f} = \frac{\partial L}{\partial f}- div \left(\frac{\partial L}{\partial \nabla f}\right)+|D|\left(\frac{\partial L}{\partial |D|f}\right).
   \end{equation*}
\end{lemma}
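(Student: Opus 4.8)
The plan is to run the standard Euler--Lagrange computation, being careful with the nonlocal term $|D|f$. Fix a test function $\phi$ (smooth and compactly supported) and set $g(\epsilon):=J(f+\epsilon\phi)$. The hypothesis that $L$ is a $C^2_0$ functional of its three arguments gives uniform bounds on $\partial L/\partial f$, $\partial L/\partial \nabla f$, $\partial L/\partial |D|f$ and keeps the integrand supported in a fixed compact set, which legitimizes differentiation under the integral sign. Then
$g'(0)=\int \frac{\partial L}{\partial f}\,\phi + \frac{\partial L}{\partial \nabla f}\cdot\nabla\phi + \frac{\partial L}{\partial |D|f}\,|D|\phi \, dx.$

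Next I would integrate by parts in the last two terms to move all derivatives off $\phi$. The gradient term is the classical one: since $\partial L/\partial \nabla f$ is compactly supported, $\int \frac{\partial L}{\partial \nabla f}\cdot\nabla\phi \, dx = -\int \operatorname{div}\!\left(\frac{\partial L}{\partial \nabla f}\right)\phi \, dx$ with no boundary contribution. For the nonlocal term I would invoke the self-adjointness of $|D|$ with respect to the $L^2$ pairing --- its Fourier symbol $|\xi|$ is real and even --- so that $\int \frac{\partial L}{\partial |D|f}\,|D|\phi \, dx = \int |D|\!\left(\frac{\partial L}{\partial |D|f}\right)\phi \, dx$, using Plancherel together with the compact support of $\partial L/\partial |D|f$ and the mild ($|x|^{-2}$) decay of $|D|\phi$ to ensure the pairing is well defined.

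Collecting the three pieces gives $g'(0)=\int \left(\frac{\partial L}{\partial f} - \operatorname{div}\!\left(\frac{\partial L}{\partial \nabla f}\right) + |D|\!\left(\frac{\partial L}{\partial |D|f}\right)\right)\phi \, dx$, and since $\phi$ is arbitrary the bracketed expression is by definition the functional derivative $\delta J/\delta f$, which is the claimed formula.

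The only genuinely delicate point is the nonlocal term: unlike $\nabla$, the operator $|D|$ does not preserve compact support, so the ``integration by parts'' here is really the self-adjointness identity for a Fourier multiplier and must be justified on the Fourier side (or via the principal-value kernel) rather than by a naive no-boundary-terms argument; everything else is routine. I would also note that the proof uses nothing about $\nabla$ and $|D|$ beyond skew-adjointness and self-adjointness respectively, which is all that is needed when this lemma is applied to the energy and momentum functionals in the next section.
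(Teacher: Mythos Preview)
Your proposal is correct and follows essentially the same route as the paper: expand $J(f+\epsilon\phi)-J(f)$ to first order in $\epsilon$, then integrate by parts in the $\nabla\phi$ term and use the self-adjointness of $|D|$ in the $|D|\phi$ term. The paper's own proof is terser and simply says ``using integration by parts and the self-adjoint property of operator $|D|$,'' whereas you spell out why that self-adjointness step is legitimate despite $|D|$ not preserving compact support; this extra care is welcome but does not constitute a different argument.
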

\begin{proof}
    For any smooth test function $\phi(x)$ and small constant $\epsilon$, we have
    \begin{align*}
        \delta J(f,\epsilon \phi):=& J(f+\epsilon\phi) - J(f) \\
        =& \int L\left(f(x)+\epsilon\phi(x), \nabla f(x)+\epsilon\nabla \phi(x), |D|f(x)+\epsilon|D|\phi(x)\right) -L\left(f(x), \nabla f(x), |D|f(x)\right)\,dx \\
        =& \int \frac{\partial L}{\partial f}\epsilon \phi + \frac{\partial L}{\partial \nabla f} \epsilon \nabla\phi + \frac{\partial f}{\partial |D|f}\epsilon |D|\phi\, dx +O(\epsilon^2)\\
        =& \epsilon \int \left(\frac{\partial L}{\partial f}- div\left(\frac{\partial L}{\partial \nabla f}\right)+|D|\left(\frac{\partial L}{\partial |D|f}\right)\right) \phi\, dx+O(\epsilon^2),
    \end{align*}
using integration by parts and the self-adjoint property of operator $|D|$.
The result follows from the definition
    \begin{equation*}
        \int \frac{\delta J}{\delta f} \phi(x) dx = \frac{\delta J(f,\epsilon \phi)}{\epsilon}\Big |_{\epsilon = 0}.
    \end{equation*}
\end{proof}
Considering the functional derivative with respect to $\Im Q$, the first equation is
\begin{equation*}
\frac{\delta \mathcal{E}}{\delta \Im Q} = c \frac{\delta \mathcal{P}}{\delta \Im Q},
\end{equation*}
we have
\begin{equation}
\Im Q = -\frac{\gamma}{2}(\Im W)^2 - c\Im W. \label{e:BabenkoQ}
\end{equation}

Similarly, for the functional derivative with respect to $\Im W$, the second equation is
\begin{equation*}
\frac{\delta \mathcal{E}}{\delta \Im W} = c \frac{\delta \mathcal{P}}{\delta \Im W}.
\end{equation*}
We have by computation
\begin{align*}
  \frac{\delta \mathcal{E}}{\delta \Im W} =& g\Im W(1+|D|\Im W) +\frac{1}{2}g|D|(\Im W)^2 +\gamma |D|\Im Q \Im W + \frac{\gamma^2}{2}(\Im W)^2(1+|D|\Im W) + \frac{\gamma^2}{6}|D|(\Im W)^3,\\
  \frac{\delta \mathcal{P}}{\delta \Im W} =& -|D|\Im Q -\gamma \Im W(1+|D|\Im W)-\frac{\gamma}{2}|D|(\Im W)^2,
\end{align*}
which leads to
\begin{align*}
    g\Im W(1+|D|\Im W) +\frac{1}{2}g|D|(\Im W)^2 +\gamma |D|\Im Q \Im W + \frac{\gamma^2}{2}(\Im W)^2(1+|D|\Im W) \nonumber \\
    + \frac{\gamma^2}{6}|D|(\Im W)^3 +c|D|\Im Q +c\gamma \Im W(1+|D|\Im W)+ c\frac{\gamma}{2}|D|(\Im W)^2 =0. 
\end{align*}

Therefore by eliminating $\Im Q$ using \eqref{e:BabenkoQ}, we obtain that $U:=\Im W$ solves the  Babenko equation
\eqref{e:BabenkoEqnLR}.

\par As a consequence of the derivation, we immediately obtain
\begin{corollary}
If a nonzero $U\in H^2_0$  solves \eqref{e:BabenkoEqnLR}, then \eqref{e:CVWW} has a solitary wave solution modulo space translations
\begin{equation*}
    \left( HU(\alpha + ct) + iU(\alpha + ct),   cHU(\alpha + ct)-\frac{\gamma}{2}H(U^2)(\alpha + ct)+icU(\alpha + ct)-\frac{i\gamma}{2}U^2(\alpha + ct)\right).
\end{equation*}
\end{corollary}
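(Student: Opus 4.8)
The plan is to reverse the derivation of Section~\ref{s:Babenko}, using that \eqref{e:CVWW} has a Hamiltonian structure whose energy and momentum are the functionals $\mathcal E$ of \eqref{TotalEnergy} and $\mathcal P$ of \eqref{Momentum} (see \cite{MR3869381}), with $\mathcal P$ generating the spatial translations $\alpha\mapsto\alpha+s$. So, let $U\in H^2_0$ be a nonzero solution of \eqref{e:BabenkoEqnLR}, and let $V$ be the companion function defined through \eqref{e:BabenkoQ}. Since $H^2(\mathbb R)$ is a Banach algebra and its elements vanish at infinity, $U^2\in H^2_0$, hence $V\in H^2_0$. Reconstruct
\begin{equation*}
 W:=HU+iU,\qquad Q:=HV+iV,
\end{equation*}
which are well defined because the Hilbert transform is bounded on every $H^s(\mathbb R)$; using $\mathbf{P}=\tfrac12(I-iH)$ and $H^2=-I$ one checks $HW=iW$ and $HQ=iQ$, so $W$ and $Q$ are holomorphic with $\Im W=U$, $\Im Q=V$, $\Re W=HU$, $\Re Q=HV$. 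Taking the time-dependent fields to be $W(\alpha+ct)$ and $Q(\alpha+ct)$, the reconstructed pair is exactly the one displayed in the statement, and it tends to $0$ at infinity because $U,V\in H^2_0\hookrightarrow C_0$ and $HU,HV\in H^2\hookrightarrow C_0$.

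Read backwards, the computation of Section~\ref{s:Babenko} shows that $(\Im W,\Im Q)=(U,V)$ is a critical point of $\mathcal E-c\mathcal P$: the Euler--Lagrange equation obtained by varying $\Im Q$ is precisely \eqref{e:BabenkoQ}, which holds by the choice of $V$, while the Euler--Lagrange equation obtained by varying $\Im W$ becomes, after eliminating $\Im Q$ with \eqref{e:BabenkoQ}, exactly the Babenko equation \eqref{e:BabenkoEqnLR}, which holds by hypothesis. Every step in that derivation is an equivalence, so nothing is lost by running it in reverse; the only point requiring care is that the additive constants produced when one inverts $|D|$ or passes between a holomorphic function and its imaginary part are killed by the decay $U,V\in H^2_0$.

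By the Benjamin--Olver characterization combined with the Hamiltonian structure, a profile translating with speed $c$ solves the $\mathcal E$-flow \eqref{e:CVWW} if and only if the Hamiltonian vector field of $\mathcal E-c\mathcal P$ vanishes at that profile, equivalently it is a critical point of $\mathcal E-c\mathcal P$. The previous paragraph supplies such a critical point, so $\bigl(W(\alpha+ct),Q(\alpha+ct)\bigr)$ solves \eqref{e:CVWW}; it is nonzero and localized since $U\not\equiv 0$ and $U\in H^2_0$, so it is a solitary wave in the required sense, the ``modulo space translations'' clause being just the freedom in the base point of the reconstruction. A reader preferring to avoid the Hamiltonian formalism can instead substitute the ansatz $W_t=cW_\alpha$, $Q_t=cQ_\alpha$ directly into \eqref{e:CVWW}: the first equation rearranges to $(1+W_\alpha)(\underline F+c)=c-i\tfrac\gamma2 W$, which one verifies against the definition $\underline F=F-i\tfrac\gamma2 F_1$ using $\Re W=HU$, $\Re Q=HV$ and \eqref{e:BabenkoQ}, and the second then collapses, after substituting this $\underline F$ together with $J=|1+W_\alpha|^2$, $Q_\alpha$, and $T_1$ and applying $\mathbf{P}$, to a holomorphic identity whose imaginary part is \eqref{e:BabenkoEqnLR}.

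I expect the bookkeeping in that last, direct route to be the only real obstacle: one must track the projections $\mathbf{P}$ and the nonlocal quadratic terms $\mathbf{P}[|Q_\alpha|^2/J]$ and $T_1$ carefully, and one must know that $J=|1+W_\alpha|^2$ is bounded away from $0$ so that every fraction makes sense --- which is automatic for the small-amplitude solutions of Theorem~\ref{t:TheoremOno} and, more generally, follows from the maximal-height hypothesis of Theorem~\ref{t:TheoremTwo}. Apart from this, the argument is essentially a mechanical inversion of the derivation already carried out.
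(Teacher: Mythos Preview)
Your proposal is correct and follows essentially the same route as the paper: the paper offers no proof beyond the sentence ``As a consequence of the derivation, we immediately obtain'' the corollary, and your argument is precisely the reversal of that variational derivation, spelled out in detail. The additional content you supply --- the explicit reconstruction $W=HU+iU$, $Q=HV+iV$, the invocation of the Benjamin--Olver principle, and the sketch of a direct-substitution alternative --- simply makes explicit what the paper leaves implicit.
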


\section{Elliptic regularity} \label{s:ellipticity}

In this section, we show that the Babenko equation \eqref{e:BabenkoEqnLR} is essentially an elliptic differential equation under some suitable conditions, so that any $H^2$ solution is a-priori smooth. 
The construction of such a non-trivial $H^2$ solution is performed in Section \ref{s:slowexist}.
\par Assume for the moment for $s\geq 2$, we have found a nontrivial $H^s$ solution of the Babenko equation \eqref{e:BabenkoEqnLR}.
We rewrite \eqref{e:BabenkoEqnLR} as
\begin{align}
    (g+c\gamma &-c^2|D|+ 2gT_{U} |D|)U  = -\frac{\gamma^2}{2}U^2 -\frac{1}{2}g|D|\Pi(U, U) - \frac{\gamma^2}{2}|D|\Pi(U,U) U \nonumber\\
    &   - \frac{\gamma^2}{6}|D|\Pi(U,U,U)  -g[T_{U}, |D|]U  +\gamma^2U[T_{U}, |D|]U - \frac{\gamma^2}{2}[T_{U^2}, |D|]U \nonumber\\
     & -gT_{|D|U}U - g\Pi(U, |D|U) -\frac{\gamma^2}{2}T_{|D|U}U^2 -\frac{\gamma^2}{2}\Pi(|D|U, U^2) \nonumber\\
     & + \gamma^2 T_{T_{U}|D|U}U + \gamma^2 \Pi(|D|U, U^2) +\gamma^2(T_{U^2}-T_{U}T_{U})|D|U. \label{e:Paradifferential}
\end{align}
Here, $T_u v$ is the low-high frequency component of paraproduct, and $\Pi(u,v)$ is the high-high frequency component of paraproduct, please see for instance Chapter $2$ of \cite{MR2768550} for definition and further introduction.
\par We define the elliptic operator $L$ by
\begin{equation*}
    LU:= (g+c\gamma -c^2|D|+ 2gT_{U} |D|)U.
\end{equation*}

By writing the Babenko equation \eqref{e:BabenkoEqnLR} as \eqref{e:Paradifferential}, each cubic term on the right hand side has either cancellation or can be estimated in $H^s$ norm as indicated below.

We now estimate the right hand side of \eqref{e:Paradifferential}.
The first quadratic term can be bounded by $\| U\|_{H^s}^2$.
For the next three terms of the right hand side of \eqref{e:Paradifferential}, we use the following lemma on paraproduct.
\begin{lemma}[high-high paraproduct] \label{l:highHigh}
 Let $\alpha, \beta \in \mathbb{R}$. If $\alpha + \beta >0$, then 
 \begin{equation*}
     \|\Pi(u,v)\|_{H^{\alpha+\beta - \frac{d}{2}}(\mathbb{R}^d)} \lesssim \| u\|_{H^{\alpha}(\mathbb{R}^d)}\| v\|_{H^{\beta}(\mathbb{R}^d)}.
 \end{equation*}
\end{lemma}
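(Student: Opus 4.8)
The plan is a direct Littlewood--Paley computation, following the paraproduct calculus of \cite{MR2768550}. Write the high--high component as $\Pi(u,v)=\sum_{j}\Delta_j u\,\widetilde\Delta_j v$, where $\Delta_j$ denotes the dyadic Fourier projection to $\{|\xi|\sim 2^j\}$ and $\widetilde\Delta_j:=\sum_{|j'-j|\le N_0}\Delta_{j'}$, so that each block $g_j:=\Delta_j u\,\widetilde\Delta_j v$ has Fourier support in a \emph{ball} $\{|\xi|\lesssim 2^j\}$ (this is the feature that distinguishes the high--high interaction from the paraproducts $T_uv$, $T_vu$). Introduce the nonnegative $\ell^2$ sequences $u_j:=2^{j\alpha}\|\Delta_j u\|_{L^2}$ and $v_j:=2^{j\beta}\|\widetilde\Delta_j v\|_{L^2}$; by the Littlewood--Paley characterization of Sobolev norms, $\|(u_j)_j\|_{\ell^2}\lesssim\|u\|_{H^\alpha}$ and $\|(v_j)_j\|_{\ell^2}\lesssim\|v\|_{H^\beta}$.

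The heart of the argument is the per-frequency bound: for a fixed constant $C$ and all $k,j$ with $j\ge k-C$,
\begin{equation*}
\|\Delta_k g_j\|_{L^2}\lesssim 2^{kd/2}\|\Delta_k g_j\|_{L^1}\lesssim 2^{kd/2}\|g_j\|_{L^1}\le 2^{kd/2}\|\Delta_j u\|_{L^2}\|\widetilde\Delta_j v\|_{L^2}=2^{kd/2}\,2^{-j(\alpha+\beta)}u_j v_j,
\end{equation*}
using the $L^1\to L^2$ Bernstein inequality at frequency $2^k$, the $L^1$-boundedness of $\Delta_k$, and Cauchy--Schwarz; note that $\Delta_k g_j=0$ once $j<k-C$, since $g_j$ is supported at frequencies $\lesssim 2^j$. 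Summing over $j$ and recalling $s:=\alpha+\beta-\tfrac d2$, so that $s+\tfrac d2=\alpha+\beta$, gives
\begin{equation*}
2^{ks}\|\Delta_k\Pi(u,v)\|_{L^2}\;\lesssim\;\sum_{j\ge k-C}2^{k(\alpha+\beta)}2^{-j(\alpha+\beta)}u_j v_j\;=\;\sum_{j\ge k-C}2^{(k-j)(\alpha+\beta)}u_j v_j .
\end{equation*}
Since $\alpha+\beta>0$ and $k-j\le C$, the convolution kernel $2^{(k-j)(\alpha+\beta)}\mathbf 1_{\{j\ge k-C\}}$ is summable in $k-j$, so Young's inequality on $\ell^2$ together with Cauchy--Schwarz yields
\begin{equation*}
\|\Pi(u,v)\|_{H^s}\;\sim\;\big\|\,2^{ks}\|\Delta_k\Pi(u,v)\|_{L^2}\,\big\|_{\ell^2_k}\;\lesssim\;\|(u_j v_j)_j\|_{\ell^2}\;\le\;\|(u_j)_j\|_{\ell^\infty}\|(v_j)_j\|_{\ell^2}\;\lesssim\;\|u\|_{H^\alpha}\|v\|_{H^\beta},
\end{equation*}
which is the assertion. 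The low-frequency block and the precise choice of Littlewood--Paley conventions contribute only harmless bounded factors, handled exactly as in \cite{MR2768550}.

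I do not anticipate a genuine obstacle: this is a standard estimate. The one step worth isolating is the dyadic bound above, where it is essential to estimate $\|\Delta_k g_j\|_{L^2}$ via the Bernstein gain $2^{kd/2}$ at the \emph{output} frequency $2^k$ rather than bounding $\|g_j\|_{L^2}\lesssim 2^{jd/2}\|\Delta_j u\|_{L^2}\|\widetilde\Delta_j v\|_{L^2}$ directly: the former keeps the decaying factor $2^{(k-j)(\alpha+\beta)}$ whose summability is precisely the hypothesis $\alpha+\beta>0$, and it makes the argument insensitive to the sign of the output index $s=\alpha+\beta-\tfrac d2$. In the present paper the lemma is applied with $\alpha=\beta\ge 2$ and $d=1$, so $s$ is comfortably positive and even the cruder summation would do.
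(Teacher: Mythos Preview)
Your proof is correct and follows the standard Littlewood--Paley argument. The paper does not supply its own proof of this lemma; it is stated as a known result with a pointer to Chapter~2 of \cite{MR2768550}, and your argument is exactly the one found there (Bernstein at the output frequency, then summation via Young's inequality using $\alpha+\beta>0$).
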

Then for $s\geq \frac{3}{2}$,
\begin{align*}
    \|g|D|\Pi(U, U) \|_{H^s}&\lesssim \|\Pi(U, U)\|_{H^{s+1}} \lesssim \|U\|_{H^s}^2,\\
    \||D|\Pi(U,U) U \|_{H^s}&\lesssim \|U\|_{H^s}\|\Pi(U, U) \|_{H^{s+1}} \lesssim \|U\|_{H^s}^3,\\
    \||D|\Pi(U,U,U) \|_{H^s}&\lesssim \|\Pi(U,U,U) \|_{H^{s+1}} \lesssim  \|U\|_{H^s}^3.
\end{align*}
Then we estimate three commutator terms.
\begin{lemma}
\begin{enumerate}
    \item  For any $s\in \mathbb{R}$,  $u\in C^1(\mathbb{R})$ and $f\in H^s(\mathbb{R})$, 
    \begin{equation*}
        \| [T_u, |D|]f\|_{H^s} \lesssim \| u\|_{C^1}\|f\|_{H^s}.
    \end{equation*}
    \item For a function $u(x)\in L^\infty$, then $T_u$ is an operator of order 0, and  $\forall s\in \mathbb{R}$,
    \begin{equation*}
        \| T_u f\|_{H^s} \lesssim \|u\|_{L^\infty}\|f\|_{H^s}.
    \end{equation*}
\end{enumerate}
\end{lemma}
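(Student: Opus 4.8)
\emph{Proof proposal.} Both statements are standard facts of the Littlewood--Paley/paraproduct calculus, so the plan is to reduce each estimate to a single frequency-localized ``building block'' bound and then reassemble. Fix a Littlewood--Paley partition of unity $\mathrm{Id}=\sum_j\Delta_j$ with $\widehat{\Delta_j v}$ supported in $\{|\xi|\sim 2^j\}$, set $S_k=\sum_{j\le k}\Delta_j$, and recall $T_u v=\sum_j S_{j-N}(u)\,\Delta_j v$ for a fixed integer $N$ (cf.\ Chapter~2 of \cite{MR2768550}). Two elementary facts will be used repeatedly: first, $\|v\|_{H^s}^2\sim\sum_j 2^{2js}\|\Delta_j v\|_{L^2}^2$; second, if $w_j$ has Fourier support in a fixed dilate of the annulus $\{|\xi|\sim 2^j\}$, then $\Delta_k\big(\sum_j w_j\big)=\sum_{|j-k|\le C_0}\Delta_k w_j$, whence by Cauchy--Schwarz $\big\|\sum_j w_j\big\|_{H^s}\lesssim\big(\sum_j 2^{2js}\|w_j\|_{L^2}^2\big)^{1/2}$.

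Part (2) is immediate: since $S_{j-N}(u)=\check\chi_{j-N}*u$ with $\check\chi_{j-N}$ normalized in $L^1$ uniformly in $j$, one has $\|S_{j-N}(u)\|_{L^\infty}\lesssim\|u\|_{L^\infty}$, so $\|S_{j-N}(u)\Delta_j f\|_{L^2}\lesssim\|u\|_{L^\infty}\|\Delta_j f\|_{L^2}$; the summand $S_{j-N}(u)\Delta_j f$ is frequency-localized at $\sim 2^j$, so applying the two facts above with $w_j=S_{j-N}(u)\Delta_j f$ gives $\|T_uf\|_{H^s}\lesssim\|u\|_{L^\infty}\|f\|_{H^s}$ for every $s$; in particular $T_u$ is bounded on each $H^s$, i.e.\ of order $0$.

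For part (1) write $[T_u,|D|]f=\sum_j\big[\,|D|,\,S_{j-N}(u)\,\big]\Delta_j f$ and estimate the $j$-th block. The key observation is that $g:=S_{j-N}(u)$ carries frequency $\lesssim 2^{j-N}$, far below the frequency $\sim 2^j$ of $\Delta_j f$ and of $g\,\Delta_j f$. Choose a fixed bump $\psi$ that \emph{vanishes near the origin} and equals $1$ on a fixed enlargement of the annulus $\{|\xi|\sim 1\}$, with $N$ large enough that $\psi(2^{-j}\cdot)\equiv 1$ on the Fourier supports of both $\Delta_j f$ and $g\,\Delta_j f$. Then $|D|\Delta_j f=|D|\psi(2^{-j}D)\Delta_j f$ and $|D|(g\,\Delta_j f)=|D|\psi(2^{-j}D)(g\,\Delta_j f)$, so the block equals $\big[\,|D|\psi(2^{-j}D),\,g\,\big]\Delta_j f$. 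The gain from working with the annular truncation is that $\sigma(\xi):=|\xi|\psi(\xi)\in C_c^\infty(\mathbb{R})$ (the conical singularity of $|\xi|$ at $0$ has been removed), so $|D|\psi(2^{-j}D)=2^{j}\sigma(2^{-j}D)$ has convolution kernel $k_j(x)=2^{2j}\check\sigma(2^{j}x)$ with $\check\sigma$ Schwartz, and the first-moment bound $\||\cdot|\,k_j\|_{L^1}=\||\cdot|\,\check\sigma\|_{L^1}<\infty$ holds uniformly in $j$. Since $\big[\,|D|\psi(2^{-j}D),\,g\,\big]h(x)=\int k_j(x-y)\big(g(y)-g(x)\big)h(y)\,dy$, the mean value inequality $|g(y)-g(x)|\le\|\partial_x g\|_{L^\infty}|x-y|$ with $\|\partial_x g\|_{L^\infty}=\|S_{j-N}(\partial_x u)\|_{L^\infty}\lesssim\|\partial_x u\|_{L^\infty}\le\|u\|_{C^1}$, followed by Young's inequality, gives $\big\|\big[\,|D|,\,S_{j-N}(u)\,\big]\Delta_j f\big\|_{L^2}\lesssim\|u\|_{C^1}\|\Delta_j f\|_{L^2}$ uniformly in $j$ --- the derivative gained from the commutator exactly compensates the one lost to $|D|$. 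Each block is again localized at frequency $\sim 2^j$, so the two facts above yield $\|[T_u,|D|]f\|_{H^s}\lesssim\|u\|_{C^1}\|f\|_{H^s}$.

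The only step beyond routine bookkeeping --- and the conceptual core of part (1) --- is this reduction of $|D|$ to the annular truncation $|D|\psi(2^{-j}D)$: one must check that $\psi(2^{-j}\cdot)\equiv 1$ on the Fourier supports of $\Delta_j f$ and $S_{j-N}(u)\Delta_j f$ (forcing $N$ to be a fixed number of units large), and that truncating away from the origin turns $\sigma$ into a genuine $C_c^\infty$ symbol, so that the rescaled kernels $k_j$ obey a $j$-independent first-moment bound (had one kept $|\xi|$ near $0$, the kernel of $|D|$ would behave like $|x|^{-2}$ and Schur's test would fail). Everything else is the standard $\ell^2$ Littlewood--Paley summation, and the argument is insensitive to the dimension, so the same proof delivers the $\mathbb{R}^d$ analogues with $\|u\|_{C^1(\mathbb{R}^d)}$ on the right.
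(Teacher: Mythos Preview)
Your proof is correct. The paper does not supply a proof of this lemma at all; it is stated as a standard fact from paradifferential calculus (with the reader implicitly referred to Chapter~2 of \cite{MR2768550}) and then used immediately to bound the commutator terms in \eqref{e:Paradifferential}. Your proposal therefore goes beyond what the paper does by giving a self-contained Littlewood--Paley argument. The approach you take---reducing to frequency-localized blocks, replacing $|D|$ by the annular truncation $|D|\psi(2^{-j}D)$ so that the resulting symbol is $C_c^\infty$, and then applying the kernel/first-moment commutator estimate---is exactly the standard route to such paraproduct commutator bounds, and all the steps (the uniform $L^1$ bound on $|\cdot|k_j$, the almost-orthogonal summation via the annular Fourier support of each block) check out. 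One small point worth making explicit for completeness is that in the usual conventions the sum defining $T_u f$ begins at some $j\ge j_0$ for which $\Delta_j$ is genuinely annular, so the truncation away from $\xi=0$ is always legitimate; you allude to this but it could be stated once at the outset.
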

For $s>\frac{3}{2}$, $H^s(\mathbb{R})$ can be embedded into $L^\infty(\mathbb{R})$, then we have
\begin{align*}
    \|g[T_{U}, |D|]U \|_{H^s} &\lesssim \|U\|_{H^s}^2, \\
    \|\gamma^2U[T_{U}, |D|]U \|_{H^s}&\lesssim \|U\|_{H^s}\| [T_{U}, |D|]U\|\lesssim \|U\|_{H^s}^3, \\
    \| \frac{\gamma^2}{2}[T_{U^2}, |D|]U\|_{H^s} & \lesssim \|U^2\|_{H^s}\|U\|_{H^s}\lesssim \|U\|_{H^s}^3.
\end{align*}

We now estimate the last seven terms of \eqref{e:Paradifferential}.
For $s>\frac{3}{2}$, 
\begin{align*}
    \|gT_{|D|U}U \|_{H^s} &\lesssim \||D|U \|_{L^\infty}\|U \|_{H^s}\lesssim\| U\|_{H^s}^2,\\
    \|g \Pi(U, |D|U)\|_{H^s} &\lesssim \||D|U \|_{H^{s-1}}\|U \|_{H^s}\lesssim\| U\|_{H^s}^2,\\
    \| -\frac{\gamma^2}{2}T_{|D|U} U^2\|_{H^s}&\lesssim \||D|U \|_{L^\infty}\|U^2 \|_{H^s}\lesssim\| U\|_{H^s}^3,\\
    \| \frac{\gamma^2}{2}\Pi(|D|U, U^2)\|_{H^s}&\lesssim \||D|U \|_{H^{s-1}}\|U^2 \|_{H^s}\lesssim\| U\|_{H^s}^3,\\
    \|\gamma^2 T_{T_{U}|D|U}U \|_{H^s}&\lesssim \|T_{U}|D|U \|_{L^\infty}\|U \|_{H^s}\lesssim\| U\|_{H^s}^3,\\
    \| \gamma^2 \Pi(|D|U, U^2)\|_{H^s}&\lesssim \||D|U \|_{H^{s-1}}\|U^2 \|_{H^s}\lesssim\| U\|_{H^s}^3,\\
    \|\gamma^2(T_{U^2}-T_{U}T_{U})|D|U \|_{H^s}&\lesssim \|U\|_{H^s}^3.
\end{align*}
Furthermore, we want to replace the $T_{U}$ by $U$ in $L$, to do this , we need the following lemma.
\begin{lemma}[\cite{MR3260858} Proposition 2.10] \label{l:paraproductDifference}
   If $s_0, s_1, s_2\in \mathbb{R}$ satisfies  $s_0< s_1 + s_2 -\frac{d}{2}$ and $s_0 \leq s_1$, then for all $a\in H^{s_1}(\mathbb{R}^d)$ and $u\in H^{s_2}(\mathbb{R}^d)$,
 \begin{equation*}
     \|au -T_au\|_{H^{s_0}(\mathbb{R}^d)} \lesssim \| a\|_{H^{s_1}(\mathbb{R}^d)}\| u\|_{H^{s_2}(\mathbb{R}^d)}.
 \end{equation*}  
\end{lemma}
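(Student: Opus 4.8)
The plan is to prove the estimate via Bony's paraproduct decomposition. With the conventions of the paper, $uv = T_u v + T_v u + \Pi(u,v)$, where $T_u v$ is the low--high component and $\Pi(u,v)$ the high--high component; applied to the product $a\cdot u$ this gives $au - T_a u = T_u a + \Pi(a,u)$. It then suffices to bound each of $T_u a$ and $\Pi(a,u)$ in $H^{s_0}(\mathbb{R}^d)$ by $\|a\|_{H^{s_1}}\|u\|_{H^{s_2}}$.

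The high--high piece follows at once from an estimate already recorded in the paper. By Lemma~\ref{l:highHigh} (whose hypothesis $s_1+s_2>0$ is part of the setting, being needed even for $\Pi(a,u)$ to make sense) one has $\|\Pi(a,u)\|_{H^{s_1+s_2-\frac{d}{2}}} \lesssim \|a\|_{H^{s_1}}\|u\|_{H^{s_2}}$. Since $\langle\xi\rangle\ge1$, the inhomogeneous Sobolev scale is nested, $H^{t}(\mathbb{R}^d)\hookrightarrow H^{s_0}(\mathbb{R}^d)$ for $t\ge s_0$; as $t := s_1+s_2-\tfrac{d}{2} > s_0$ by hypothesis, this upgrades the bound to $\|\Pi(a,u)\|_{H^{s_0}} \lesssim \|a\|_{H^{s_1}}\|u\|_{H^{s_2}}$.

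The low--high piece $T_u a$ requires the actual work. Decomposing dyadically, $T_u a = \sum_k (S_{k-N}u)\,\Delta_k a$, so that $\|\Delta_j(T_u a)\|_{L^2} \lesssim \sum_{|k-j|\le2}\|S_{k-N}u\|_{L^\infty}\|\Delta_k a\|_{L^2}$, and Bernstein's inequality bounds the low-frequency factor by $\|u\|_{H^{s_2}}$ if $s_2>\tfrac{d}{2}$, by $\langle j\rangle^{1/2}\|u\|_{H^{s_2}}$ if $s_2=\tfrac{d}{2}$, and by $2^{j(\frac{d}{2}-s_2)}\|u\|_{H^{s_2}}$ if $s_2<\tfrac{d}{2}$. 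Writing $a_j := 2^{js_1}\|\Delta_j a\|_{L^2}$, which forms an $\ell^2$ sequence of norm $\simeq\|a\|_{H^{s_1}}$, one obtains $2^{js_0}\|\Delta_j(T_u a)\|_{L^2} \lesssim m_j\,\|u\|_{H^{s_2}}\,\tilde a_j$, where $\{\tilde a_j\}_j$ is a finite overlap of shifts of $\{a_j\}_j$ (hence again $\ell^2$ with norm $\lesssim\|a\|_{H^{s_1}}$) and the multiplier $m_j$ equals $2^{j(s_0-s_1)}$ — times $\langle j\rangle^{1/2}$ in the borderline case $s_2=\tfrac{d}{2}$, where however $s_0<s_1$ holds strictly — when $s_2\ge\tfrac{d}{2}$, and equals $2^{j(s_0+\frac{d}{2}-s_2-s_1)}$ when $s_2<\tfrac{d}{2}$. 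The two hypotheses are precisely what is needed here: $s_0\le s_1$ gives $s_0-s_1\le0$, while $s_0<s_1+s_2-\tfrac{d}{2}$ gives $s_0+\tfrac{d}{2}-s_2-s_1<0$ and, in the borderline case, the strict inequality $s_0<s_1$ that absorbs the logarithm. Thus $\{m_j\}_j\in\ell^\infty$, so $\{2^{js_0}\|\Delta_j(T_u a)\|_{L^2}\}_j\in\ell^2$ with norm $\lesssim\|u\|_{H^{s_2}}\|a\|_{H^{s_1}}$, i.e.\ $\|T_u a\|_{H^{s_0}}\lesssim\|u\|_{H^{s_2}}\|a\|_{H^{s_1}}$. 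Adding the two contributions would complete the proof.

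The only delicate point — the main obstacle — is the low--high piece in the regime $s_2\le\tfrac{d}{2}$, where $S_{j-N}u$ is not bounded in $L^\infty$ uniformly in $j$; one wins precisely by spending the positive slack $s_1+s_2-\tfrac{d}{2}-s_0>0$ against the $2^{j(\frac{d}{2}-s_2)}$ growth coming from Bernstein. Everything else is bookkeeping or already in hand (Lemma~\ref{l:highHigh}; boundedness of $T_u$ on $H^s$ when $s_2>\tfrac{d}{2}$). Alternatively, one may simply cite the statement as \cite[Proposition~2.10]{MR3260858}.
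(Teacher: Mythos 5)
The paper does not prove this lemma at all --- it is quoted verbatim as Proposition 2.10 of \cite{MR3260858} --- so your Littlewood--Paley proof is necessarily a ``different route,'' and it is a correct and complete one. The decomposition $au-T_au=T_ua+\Pi(a,u)$, the reduction of the high--high piece to Lemma~\ref{l:highHigh} plus the nesting $H^{s_1+s_2-\frac d2}\hookrightarrow H^{s_0}$, and the dyadic estimate of $T_ua$ (including the correct use of $s_0\le s_1$ when $s_2>\tfrac d2$, of the strict inequality $s_0<s_1$ forced by the hypothesis in the borderline case $s_2=\tfrac d2$, and of the slack $s_1+s_2-\tfrac d2-s_0>0$ against the Bernstein loss when $s_2<\tfrac d2$) are all standard and correctly executed. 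One point deserves emphasis rather than a parenthesis: the hypothesis $s_1+s_2>0$ that you import for the high--high term is genuinely \emph{not} implied by $s_0<s_1+s_2-\tfrac d2$ and $s_0\le s_1$ (take $d=1$, $s_1=0$, $s_2=-1$, $s_0=-2$), and without it the stated estimate actually fails --- for $a$ a real unit $L^2$ bump at frequency $2^J$ and $u=2^Ja$ one has $T_au=T_ua=0$ while $\|au\|_{H^{-2}}\gtrsim 2^J$ and $\|a\|_{L^2}\|u\|_{H^{-1}}\approx 1$. So you are right that $s_1+s_2>0$ must be regarded as part of the setting; it does hold in every application made in this paper ($s_1=s\ge 2$, $s_2=s-1$, $d=1$), so nothing downstream is affected, but strictly speaking your argument proves the lemma under this additional (and necessary) hypothesis rather than the literal statement.
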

Then we have
\begin{equation*}
    \|(T_{U}-U)|D|U\|_{H^s}\lesssim \| U\|_{H^s}\||D|U\|_{H^{s-1}} \lesssim   \| U\|_{H^s}^2.
\end{equation*}
Define 
\begin{equation*}
    \tilde{L}u  = (g+c\gamma -c^2|D|+ 2gU |D|)u,
\end{equation*}
and $f$ equals the right hand side of \eqref{e:Paradifferential} plus $(T_{U}-U)|D|U$
We have thus shown that $ \tilde{L}U  = f\in H^s(\mathbb{R})$.
\par To gain more regularity of the solution, we need to assume the following sign condition and the (possible) maximal height condition:
\begin{equation}
    g+c\gamma<0, \label{SignCondition}
\end{equation}
\begin{equation}
    2gU < c^2. \label{MaximalHeight}
\end{equation}
 \begin{proposition} \label{p:maximumheight}
    If the sign condition \eqref{SignCondition}
    and the maximal height condition \eqref{MaximalHeight}
    both hold, then the operator $\tilde{L}$ is elliptic of order $1$.
\end{proposition}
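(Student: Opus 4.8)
The plan is to read off the symbol of $\tilde L$ and check its non-degeneracy directly from the two hypotheses. Writing $\tilde L = (g+c\gamma) + (2gU-c^2)|D|$ and regarding $2gU|D|$ as the composition of multiplication by $U$ (an operator of order $0$, bounded on $H^t$ for $|t|\le s$ since $U\in H^s$, $s\ge 2$) with the order-one Fourier multiplier $|D|$, the operator $\tilde L$ is of order $1$ with principal symbol
\[
p(\alpha,\xi) = (2gU(\alpha) - c^2)|\xi|,
\]
and full leading symbol $\sigma(\alpha,\xi) = (g+c\gamma) + (2gU(\alpha)-c^2)|\xi|$; the lower-order terms produced by commuting $U$ past $|D|$ involve only derivatives of $U$ and are harmless since $U\in H^s$. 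Ellipticity of order $1$ amounts to a two-sided bound $c_1\langle\xi\rangle \le |\sigma(\alpha,\xi)| \le C_1\langle\xi\rangle$ uniform in $\alpha$, so the content of the proposition is that $\sigma$ never vanishes and stays comparable to $\langle\xi\rangle$.

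First I would pin down the coefficient of $|\xi|$. Since $s\ge 2 > \tfrac12$, Sobolev embedding gives $U\in L^\infty$, so $c^2 - 2gU(\alpha)$ is a bounded continuous function; reading the maximal height condition \eqref{MaximalHeight} as $2g\sup U < c^2$, together with $g>0$, yields
\[
0 < 2\delta_0 := c^2 - 2g\sup_\alpha U \ \le\ c^2 - 2gU(\alpha) \ \le\ c^2 + 2g\|U\|_{L^\infty} =: M \qquad\text{for all }\alpha.
\]
(If one prefers the pointwise reading $2gU(\alpha)<c^2$, the same uniform lower bound follows from compactness, using $U\in H^2$, hence $U(\alpha)\to 0$ as $|\alpha|\to\infty$.) Thus the principal symbol obeys $2\delta_0|\xi| \le |p(\alpha,\xi)| \le M|\xi|$ and has the fixed sign $p(\alpha,\xi)\le 0$: it is a uniformly elliptic symbol of order $1$.

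Next I would bring in the sign condition \eqref{SignCondition}. Because $g+c\gamma<0$, the zeroth-order term of $\sigma$ carries the same sign as $p$, so there is no cancellation at any frequency:
\[
\sigma(\alpha,\xi) = -\bigl(|g+c\gamma| + (c^2 - 2gU(\alpha))|\xi|\bigr) \le -\bigl(|g+c\gamma| + 2\delta_0|\xi|\bigr) < 0 .
\]
Hence $|\sigma(\alpha,\xi)| \ge |g+c\gamma| + 2\delta_0|\xi| \gtrsim \langle\xi\rangle$, while on the other hand $|\sigma(\alpha,\xi)| \le |g+c\gamma| + M|\xi| \lesssim \langle\xi\rangle$, both uniform in $\alpha$. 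This is exactly the assertion that $\tilde L$ is elliptic of order $1$; moreover the symbol is bounded away from $0$ everywhere, which — after the standard paradifferential parametrix construction, comparing $\tilde L$ with its paraproduct version $g+c\gamma-c^2|D|+2gT_U|D|$ up to the lower-order error $(U-T_U)|D|U$ already absorbed into $f$ — furnishes an approximate inverse of order $-1$, so that $\tilde L U = f\in H^s$ bootstraps to $U\in H^{s+1}$.

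There is no serious obstacle; the only points requiring care are the uniformity of the lower bound $c^2 - 2gU(\alpha)\ge 2\delta_0>0$ (this is where $U\in H^2$, hence bounded with decay, is used, and why it matters that \eqref{MaximalHeight} controls the supremum rather than holding merely pointwise without a gap), and the observation that the zeroth-order term $g+c\gamma$, which would otherwise create a resonance at $|\xi|=(g+c\gamma)/c^2$, cannot do so precisely because \eqref{SignCondition} forces it to have the sign opposite to that resonance — this is the role of the sign condition.
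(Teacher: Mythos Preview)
Your argument is correct. The paper does not actually supply a proof of this proposition; it is stated and immediately used. Your direct symbol computation --- writing $\sigma(\alpha,\xi)=(g+c\gamma)+(2gU(\alpha)-c^2)|\xi|$, observing that the maximal height condition makes the order-one coefficient uniformly negative while the sign condition forces the zeroth-order term to share that sign, so that $|\sigma|\gtrsim\langle\xi\rangle$ with no cancellation --- is exactly the intended verification, and your remark that the pointwise hypothesis \eqref{MaximalHeight} upgrades to a uniform gap via $U\in H^2\hookrightarrow C_0$ is a useful clarification the paper leaves implicit.
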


For elliptic equations, we recall the classical lemma of elliptic regularity.
\begin{lemma}[Elliptic regularity]
 For an  elliptic operator $\Phi$ of order $m$, if $u, \Phi u \in H^s$, then $u\in H^{s+m}$, and 
 \begin{equation*}
     \|u\|_{H^{s+m}}\lesssim \|u\|_{H^{s}} + \|\Phi u\|_{H^{s}}. \label{t:Elliptic}
 \end{equation*} 
\end{lemma}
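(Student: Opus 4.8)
The plan is to prove the regularity gain and the accompanying a priori bound simultaneously, by constructing a parametrix for $\Phi$, i.e. an approximate left inverse of order $-m$. I read ``elliptic operator of order $m$'' in the (para)differential sense: $\Phi=\mathrm{Op}(\phi)$ for a symbol $\phi$ of order $m$ --- a Kohn--Nirenberg symbol when the coefficients are smooth, or a paradifferential symbol in the sense of Bony--M\'etivier when they have only finite regularity, as happens for the operator $\tilde L$ of Section~\ref{s:ellipticity} --- and ellipticity means $|\phi(x,\xi)|\gtrsim\langle\xi\rangle^m$ for $|\xi|\ge R$ with some $R>0$. The argument uses only two facts from the symbolic calculus, each quantitatively controlled by finitely many symbol seminorms: (i) a symbol of order $k$ defines an operator bounded $H^s\to H^{s-k}$ for every $s$; and (ii) the composition rule $\mathrm{Op}(a)\,\mathrm{Op}(b)=\mathrm{Op}(ab)+(\text{an operator of order one below }\mathrm{ord}\,a+\mathrm{ord}\,b)$. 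In the limited-regularity paradifferential setting the gain in (ii) is still one full derivative, which is all that is needed here since $\tilde L$ has order $1$.

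First I would fix a smooth cutoff $\chi(\xi)$ equal to $1$ on $\{|\xi|\ge 2R\}$ and to $0$ on $\{|\xi|\le R\}$ and set $\psi_0(x,\xi):=\chi(\xi)/\phi(x,\xi)$, which by ellipticity is a genuine symbol of order $-m$. By fact (ii), $\mathrm{Op}(\psi_0)\,\Phi=\mathrm{Op}(\psi_0\phi)+(\text{order }-1)=\mathrm{Op}(\chi)+(\text{order }-1)=I+E_1$, where $E_1$ is the sum of an operator of order $-1$ and the smoothing operator $\mathrm{Op}(\chi-1)$ (note that $\chi-1$ is compactly supported in $\xi$), hence has order $-1$. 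If $m\le 1$ --- the case relevant to this paper --- this already suffices: take $\Psi:=\mathrm{Op}(\psi_0)$, which has order $-m$, and $R:=-E_1$, which has order $-1\le -m$. For general $m$ one iterates, choosing correction symbols $\psi_{-1},\psi_{-2},\dots,\psi_{-(m-1)}$ of orders $-m-1,-m-2,\dots,-2m+1$ recursively so as to kill the successive error symbols, and obtains $\Psi:=\mathrm{Op}(\psi_0+\psi_{-1}+\cdots+\psi_{-(m-1)})$ of order $-m$ with $\Psi\Phi=I+R$ and $R$ of order $\le -m$.

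With the parametrix in hand I would conclude at once. Writing $u=\Psi\Phi u-Ru$: since $\Phi u\in H^s$ and $\Psi$ has order $-m$, fact (i) gives $\Psi\Phi u\in H^{s+m}$ with $\|\Psi\Phi u\|_{H^{s+m}}\lesssim\|\Phi u\|_{H^s}$; since $u\in H^s$ and $R$ has order $\le -m$, fact (i) gives $Ru\in H^{s+m}$ with $\|Ru\|_{H^{s+m}}\lesssim\|u\|_{H^s}$. Adding, $u\in H^{s+m}$ and $\|u\|_{H^{s+m}}\le\|\Psi\Phi u\|_{H^{s+m}}+\|Ru\|_{H^{s+m}}\lesssim\|\Phi u\|_{H^s}+\|u\|_{H^s}$, which is the claim. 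Note that the membership $u\in H^{s+m}$ is produced by the argument itself, so there is no circularity in asserting the estimate.

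The only genuine difficulty is making sure the symbolic calculus is actually available for the operator to which the lemma is applied. In Section~\ref{s:ellipticity} that operator is $\tilde L u=(g+c\gamma)u-(c^2-2gU)|D|u$, whose coefficient is only as regular as $U\in H^s$, so one must use the paradifferential rather than the smooth calculus --- for which facts (i) and (ii) hold with one derivative of gain, exactly enough for this first-order operator. For this specific $\tilde L$ one can even bypass the calculus: by Proposition~\ref{p:maximumheight} the ellipticity hypothesis --- here precisely the maximal height condition $2gU<c^2$ together with $g+c\gamma<0$ --- forces $c^2-2gU$ to be bounded below by a positive constant, so solving $\tilde L u=f$ algebraically gives $|D|u=(c^2-2gU)^{-1}\bigl(f-(g+c\gamma)u\bigr)$; the right-hand side lies in $H^s$ since $H^s(\mathbb{R})$ is an algebra for $s>\tfrac12$ and the Moser estimate for Nemytskii operators puts $(c^2-2gU)^{-1}$ in $c^{-2}+H^s$; and $\|u\|_{H^{s+1}}^2=\|u\|_{H^s}^2+\||D|u\|_{H^s}^2$ by Plancherel (using $\langle\xi\rangle^2=1+|\xi|^2$), which yields both $u\in H^{s+1}$ and the estimate with $m=1$.
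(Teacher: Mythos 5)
Your proof is correct. The paper itself offers no argument for this lemma --- it is invoked as a ``classical lemma of elliptic regularity'' and immediately applied to $\tilde L$ --- so there is nothing to compare line by line; what you have done is supply the standard proof that the authors left implicit. The parametrix construction is the textbook route and is carried out correctly: the one-step error $E_1$ of order $-1$ already suffices when $m\le 1$, and your recursive correction (or, equivalently, bootstrapping the one-derivative gain $m$ times) handles general integer order. The only caveats worth recording are the ones you already flag: the lemma is false without \emph{uniform} ellipticity and some minimal regularity of the coefficients, so for $\tilde L u=(g+c\gamma)u-(c^2-2gU)|D|u$ one must either invoke the paradifferential calculus (available since $U\in H^s$, $s\ge 2$, is Lipschitz, giving the full one-derivative gain in the composition) or, better, use your direct argument. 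That second argument is in fact the cleanest match to what the paper needs: since $c^2-2gU$ is bounded below by a positive constant under the maximal height condition and $U$ decays at infinity, dividing through and using that $H^s(\mathbb{R})$ is an algebra with the Moser estimate for $(c^2-2gU)^{-1}-c^{-2}$ gives $|D|u\in H^s$, hence $u\in H^{s+1}$ by Plancherel, with the stated bound. (Two trivial slips: the algebraic solution should read $|D|u=(c^2-2gU)^{-1}\bigl((g+c\gamma)u-f\bigr)$, a sign that does not affect the estimate; and the sign condition $g+c\gamma<0$ is needed for invertibility of the full symbol rather than for the principal-symbol ellipticity used in this regularity step.)
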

Given that $U\in H^s(\mathbb{R})$, above classical theory of elliptic regularity shows that $U \in H^{s+1}(\mathbb{R})$.
Iterating this process gives $U \in H^N(\mathbb{R})$, for any $N>0$.

\section{Existence of nontrivial solution to the Babenko equation}\label{s:slowexist}

In this section, we use the Benjamin-Ono soliton` to construct solitary waves for~\eqref{e:CVWW} with velocities slightly above the critical velocity $-\frac{g}{\gamma}$.

We note that the constant vorticity water waves problem admits a two-parameter family of scaling symmetries, and that we have a sign reversal symmetry where we can change the signs of both $\gamma$ and $c$.
This means we can rescale the equations so that $g=1$ and $\gamma=-1$.
After this rescaling, the assumption that the velocity is close to the critical velocity means we can write
\begin{equation*}
    c= 1+\epsilon
\end{equation*}
for $0<\epsilon<\epsilon_0$.

The rescaled version of the Babenko equation~\eqref{e:BabenkoEqnLR}, we have  
\begin{equation*}
    -\epsilon U-(1+\epsilon)^2|D|U +\frac{1}{2}U^2+U|D|U +\frac{1}{2}|D|U^2
    -\frac{1}{2}U|D|U^2  + \frac{1}{2}U^2|D|U + \frac{1}{6}|D|U^3=0.
\end{equation*}

We now rescale again, using the scaling 
\begin{equation}
    U (\alpha) = \epsilon \phi\left(\frac{\epsilon \alpha}{(1+\epsilon)^2}\right) \label{UPhiRelation}
\end{equation}
and dividing by $\epsilon^2$, treating the terms with higher powers of $\epsilon$ as perturbative:
\begin{equation}
    \begin{split}
        -\phi-|D|\phi +\frac{1}{2}\phi^2&= \frac{\epsilon}{(1+\epsilon)^2}\left(-\phi|D|\phi-\frac{1}{2}|D|\phi^2\right)\\
        &\quad +\frac{\epsilon^2}{(1+\epsilon)^2}\left(\frac{1}{2}\phi|D|\phi^2-\frac{1}{2}\phi^2|D|\phi-\frac{1}{6}|D|\phi^3\right).
    \end{split}\label{e:babenkoeqnrescaled}
\end{equation}

Now consider \begin{equation*}
    \rho(\alpha)=\frac{4}{1+\alpha^2}.
\end{equation*}
This is the single soliton solution of velocity $1$ for the Benjamin-Ono equation; it satisfies the equation
\begin{equation*}
    -\rho-|D|\rho+\frac{1}{2}\rho^2=0.
\end{equation*}

When $\epsilon$ is small, we will construct solutions to~\eqref{e:babenkoeqnrescaled} close to $\rho$.
Subtracting the equations for $\phi$ and $\rho$, we find that $v=\phi-\rho$ solves the following equation:
\begin{equation}
    -v-|D|v+\rho v=-\frac{1}{2}v^2+\tilde g_\epsilon(v+\rho).\label{e:veqn}
\end{equation}
where $\tilde g_\epsilon(v+\rho)$ is the right-hand side of~\eqref{e:babenkoeqnrescaled}

Let 
\begin{equation*}
    \mathcal L=-1-|D| +\rho
\end{equation*}
denote the differential operator on the left-hand side of \eqref{e:veqn}.
This operator arises in the study of the linearization of the Benjamin-Ono equation around its soliton solution $\rho$.
Its principal part is elliptic, and it sends functions in $H^1$ to $L^2$.

The spectrum of the operator $\mathcal L$ is well-understood, see for example ~\cite{bennett1983solitary} or ~\cite{kenig2009stability}.
There are four eigenvalues \begin{equation*}
    \left\{-1,\frac{1-\sqrt 5}{2},0,\frac{1+\sqrt 5}{2}\right\},
\end{equation*} along with a continuous spectrum $(-\infty,-1]$, and the eigenvalue $0$ is simple with $\rho'$ as the corresponding eigenfunction.
The eigenfunction corresponding to the eigenvalue  $0$ is also an odd function which can be computed explicitly.

 Since the kernel of the operator $\mathcal{L}$ is spanned by $\rho^{'}$, where $\rho^{'}$ is an odd function, we work in spaces based on $H^2_e(\mathbb R)$, the space of all even functions in $H^2(\mathbb R)$.
Note that $H^2_e(\mathbb R)$ is an algebra, and that $|D|$ maps $H^k_e(\mathbb R)$ into the space $H^{k-1}_e(\mathbb R)$ of even functions in $H^{k-1}(\mathbb R)$.
In this setting, the inverse of $\mathcal{L}$ can be uniquely determined.

We write,
\begin{equation*}
    P_\epsilon(v)=\mathcal L^{-1}\left(-\frac 1 2 v^2+\tilde g_\epsilon(v+\rho)\right).
\end{equation*}
If we can find a fixed point of this operator, then we have a solution to~\eqref{e:veqn}.

Define the weighted Sobolev space $X$ to be the space of all even functions such that
\begin{equation}
    \|u\|^2_{X}=\|u\|^2_{H^2}+\eta^2\|\alpha u\|^2_{H^2}\label{e:fixedpointspace}
\end{equation}
is finite, where $\eta$ is a small constant to be chosen later.

We will show that $P_\epsilon$ is a contraction on a small ball $B(0,\delta)$ in $X$, so that it has a unique fixed point in that ball.
We work in the weighted space $X$ rather than merely $H^2$ to get better control on the asymptotic $\alpha$ behavior of $v$ and thus a clearer picture of the shape of the solitary wave.

We have \begin{equation*}
    P_\epsilon(u)-P_\epsilon(v)=\mathcal L^{-1}\left(-\frac 1 2 (u-v)(u+v)+\tilde g_\epsilon(u+\rho)-\tilde g_\epsilon(v+\rho)\right).
\end{equation*}

We first show that 
\begin{equation}
        \left\|-\frac 1 2 (u-v)(u+v)+\tilde g_\epsilon(u+\rho)-\tilde g_\epsilon(v+\rho)\right\|_{H^1}\le C(\delta,\epsilon) \|u-v\|_{H^2}.\label{e:innerfixedpointbound}
\end{equation}
Recall the definition 
\begin{equation*}
    \begin{split}
        \tilde g_\epsilon(\phi)&=\frac{\epsilon}{(1+\epsilon)^2}\left(-\phi|D|\phi-\frac{1}{2}|D|\phi^2\right)\\
        &\quad +\frac{\epsilon^2}{(1+\epsilon)^2}\left(\frac{1}{2}\phi|D|\phi^2-\frac{1}{2}\phi^2|D|\phi-\frac{1}{6}|D|\phi^3\right).
    \end{split}
\end{equation*} 
In investigating the difference
\begin{equation*}
    \tilde g_\epsilon(u+\rho)-\tilde g_\epsilon(v+\rho),
\end{equation*} we work with each term individually, also tracking how many powers of $\rho$ appear.

\textbf{Terms not in $\tilde g_\epsilon(u+\rho)-\tilde g_\epsilon(v+\rho)$}: We have 
\begin{equation*}
    \left\|-\frac 1 2 (u-v)(u+v)\right\|_{H^1}\le \delta \|u-v\|_{H^2}.
\end{equation*}

\textbf{Second-order terms}: The second-order terms without any factors of $\rho$ contribute
\begin{equation*}
    \frac{\epsilon}{(1+\epsilon)^2}\left\|-u|D|u+v|D|v\right\|_{H^1}=\frac{\epsilon}{(1+\epsilon)^2}\|u|D|(u-v)+(u-v)|D|v\|_{H^1}\le 2\frac{\epsilon}{(1+\epsilon)^2}\delta\|u-v\|_{H^2}
\end{equation*}
 and
\begin{equation*}
    \frac{\epsilon}{(1+\epsilon)^2}\left\|-\frac{1}{2}|D|u^2+\frac{1}{2}|D|v^2\right\|_{H^1}=\frac{\epsilon}{2(1+\epsilon)^2}\||D|(u^2-v^2)\|_{ H^1}\le \frac{\epsilon}{(1+\epsilon)^2}\delta \|u-v\|_{H^2}.
\end{equation*}
The second-order terms with one factor of $\rho$ contribute
\begin{align*}
    \frac{\epsilon}{(1+\epsilon)^2}\left\|-u|D|\rho+v|D|\rho\right\|_{H^1}&\le \frac{\epsilon}{(1+\epsilon)^2}\|\rho\|_{ H^2}\|u-v\|_{H^2},\\
    \frac{\epsilon}{(1+\epsilon)^2}\left\|-\rho|D|u+\rho|D|v\right\|_{H^1}&\le \frac{\epsilon}{(1+\epsilon)^2}\|\rho\|_{L^\infty}\|u-v\|_{H^2},\text{ and}\\
    \frac{\epsilon}{(1+\epsilon)^2}\left\|-|D|(\rho u)+|D|(\rho v)\right\|_{H^1}&\le \frac{\epsilon}{(1+\epsilon)^2}\|\rho\|_{H^2}\|u-v\|_{H^2}.
\end{align*}

\textbf{Third-order terms}: The third-order terms with no factors of $\rho$ contribute
\begin{align*}
    \frac{\epsilon^2}{(1+\epsilon)^2}\left\|\frac{1}{2}u|D|u^2-\frac{1}{2}v|D|v^2\right\|_{H^1}&\le \frac{\epsilon^2}{2(1+\epsilon)^2}\left\|(u-v)|D|u^2+v|D|(u^2-v^2)\right\|_{H^1}\nonumber\\
    &\le\frac{3\epsilon^2\delta^2}{2(1+\epsilon)^2}\|u-v\|_{H^2},\\
    \frac{\epsilon^2}{(1+\epsilon)^2}\left\|-\frac{1}{2}u^2|D|u+\frac{1}{2}v^2|D|v\right\|_{H^1}&\le \frac{\epsilon^2}{2(1+\epsilon)^2}\left\|(u^2-v^2)|D|u+v^2|D|(u-v)\right\|_{H^1}\nonumber\\
    &\le\frac{3\epsilon^2\delta^2}{2(1+\epsilon)^2}\|u-v\|_{H^2}, \text{ and}\\
        \frac{\epsilon^2}{(1+\epsilon)^2}\left\|-\frac{1}{6}|D|u^3+\frac{1}{6}|D|v^3\right\|_{H^1}&\le \frac{\epsilon^2}{6(1+\epsilon)^2}\left\|u^3-v^3\right\|_{H^2}\le\frac{\epsilon^2\delta^2}{2(1+\epsilon)^2}\|u-v\|_{H^2}.
\end{align*}
The third-order terms with one factor of $\rho$ contribute
\begin{align*}
    \frac{\epsilon^2}{(1+\epsilon)^2}\left\|\frac{1}{2}\rho|D|u^2-\frac{1}{2}\rho|D|v^2 \right\|_{H^1}&\le \frac{\epsilon^2}{(1+\epsilon)^2}\delta \|\rho\|_{H^1}\|u-v\|_{H^2},\\
    \frac{\epsilon^2}{(1+\epsilon)^2}\left\|u|D|(u\rho)-v|D|(v\rho) \right\|_{H^1}&\le \frac{\epsilon^2}{(1+\epsilon)^2}\delta \|\rho\|_{H^2}\|u-v\|_{H^2},\\
    \frac{\epsilon^2}{(1+\epsilon)^2}\left\|-(u\rho)|D|u+(v\rho)|D|v \right\|_{H^1}&\le \frac{\epsilon^2}{(1+\epsilon)^2}\delta \|\rho\|_{H^2}\|u-v\|_{H^2},\\
    \frac{\epsilon^2}{(1+\epsilon)^2}\left\|-\frac{1}{2}u^2|D|\rho+\frac{1}{2}v^2|D|\rho \right\|_{H^1}&\le \frac{\epsilon^2}{(1+\epsilon)^2}\delta \|\rho\|_{H^2}\|u-v\|_{H^2},\text{ and}\\
    \frac{\epsilon^2}{(1+\epsilon)^2}\left\|-\frac{1}{2}|D|(u^2\rho)+\frac{1}{2}|D|(v^2\rho)\right\|_{H^1}&\le \frac{\epsilon^2}{(1+\epsilon)^2}\delta \|\rho\|_{H^2}\|u-v\|_{H^2}.
\end{align*}
The third-order terms with two factors of $\rho$ contribute
\begin{align*}
    \frac{\epsilon^2}{(1+\epsilon)^2}\left\|\rho|D|(u\rho)-\rho|D|(v\rho) \right\|_{H^1}&\le  \frac{\epsilon^2}{(1+\epsilon)^2}\|\rho\|_{H^2}^2\|u-v\|_{H^2},\\
    \frac{\epsilon^2}{(1+\epsilon)^2}\left\|\frac{1}{2}u|D|\rho^2-\frac{1}{2}v|D|\rho^2 \right\|_{H^1}&\le \frac{\epsilon^2}{2(1+\epsilon)^2} \|\rho\|_{H^2}^2\|u-v\|_{H^2},\\
    \frac{\epsilon^2}{(1+\epsilon)^2}\left\|-\frac{1}{2}\rho^2|D|u+\frac{\epsilon^2}{2(1+\epsilon)^2}\rho^2|D|v \right\|_{H^1}&\le \frac{\epsilon^2}{2(1+\epsilon)^2} \|\rho\|_{H^2}^2\|u-v\|_{H^2},\\
    \frac{\epsilon^2}{(1+\epsilon)^2}\left\|-(u\rho)|D|\rho+(v\rho)|D|\rho \right\|_{H^1}&\le \frac{\epsilon^2}{(1+\epsilon)^2}\|\rho\|_{H^1} \|\rho\|_{ H^2}\|u-v\|_{H^2},\text{ and}\\
    \frac{\epsilon^2}{(1+\epsilon)^2}\left\|-\frac{1}{2}|D|(u\rho^2)+\frac{1}{2}|D|(v\rho^2)\right\|_{H^1}&\le \frac{\epsilon^2}{2(1+\epsilon)^2} \|\rho\|_{H^2}^2\|u-v\|_{H^2}.
\end{align*}

Combining above estimates, we have the desired bound~\eqref{e:innerfixedpointbound}.

Because the operator $\mathcal L$, when restricted to even functions, has two nonzero eigenvalues in its discrete spectrum 
\begin{equation*}
    \frac{1+\sqrt 5}{2}\text{ and }\frac{1-\sqrt 5}{2}
\end{equation*} 
a continuous spectrum contained in $(-\infty,-1]$, we know that
\begin{equation}
    \|\mathcal L^{-1}g\|_{L^2}\le \left(\frac{\sqrt 5-1}{2}\right)^{-1}\|g\|_{L^2}.\label{e:eigenvalueboundforLinv}
\end{equation}

Since the operator $1+|D|$ is elliptic, we know that
\begin{equation}
    \|g\|_{H^1}\lesssim \|g\|_{L^2}+\|(1+|D|) g\|_{L^2},\label{e:ellipticRegularityStart}
\end{equation}
where the implicit constant does not depend on $g$.
Moreover, since $\rho$ is bounded, this elliptic regularity statement tells us that if 
\begin{equation*}
    (1+|D|)f=\rho f-g,
\end{equation*}
then
\begin{equation*}
    \|f\|_{H^1}\lesssim \|f\|_{L^2}+\|\rho f-g\|_{L^2}\lesssim \|f\|_{L^2}+\|g\|_{L^2},
\end{equation*}
or, in other words, that
\begin{equation}
    \|\mathcal L^{-1}g\|_{H^1}\lesssim\|\mathcal L^{-1}g\|_{L^2}+\|g\|_{L^2}.\label{e:initialEllipticRegularity}
\end{equation}
By applying the spectral bound~\eqref{e:eigenvalueboundforLinv}, we have
\begin{equation*}
    \|\mathcal L^{-1}g\|_{L^2}\lesssim \|g\|_{L^2},
\end{equation*}
so that~\eqref{e:initialEllipticRegularity} becomes
\begin{equation*}
    \|\mathcal L^{-1}g\|_{H^1}\lesssim \|g\|_{L^2}.
\end{equation*}
Running the same elliptic regularity argument again, we have \begin{equation}
    \|\mathcal L^{-1}g\|_{H^2}\lesssim\|\mathcal L^{-1}g\|_{H^1}+\|g\|_{H^1}.\label{e:ellipticRegularityEnd}
\end{equation}
Applying this to 
\begin{equation*}
    P_\epsilon(u)-P_\epsilon(v)=\mathcal L^{-1}\left(-\frac 1 2 (u-v)(u+v)+\tilde g_\epsilon(u+\rho)-\tilde g_\epsilon(v+\rho)\right)
\end{equation*}
and using~\eqref{e:innerfixedpointbound}, we have
\begin{equation}
    \left\|P_{\epsilon}(u)-P_{\epsilon}(v)\right\|_{H^2}\le C(\delta,\epsilon) \|u-v\|_{H^2},\label{e:fixedpointboundwithconstants}
\end{equation}
so we have a contraction on $H^2_e$ as long as $\delta$ and $\epsilon$ are small enough.

We now turn to the weighted space $(\eta\alpha)^{-1} H^2_e$.
Multiplying~\eqref{e:veqn} by $\eta\alpha$, we have
\begin{equation*}
    \eta\alpha v-|D|(\eta\alpha v)+\rho (\eta\alpha v)=\eta[\alpha,|D|]v-\frac 1 2 v(\eta\alpha v)+\eta\alpha \tilde g_{\epsilon}(v+\rho).\label{e:veqnweighted}
\end{equation*}

Writing $w=\eta\alpha v$, then
\begin{equation}
    \mathcal L w=\eta[\alpha,|D|]v-\frac 1 2 vw+\eta\alpha \tilde g_{\epsilon}(v+\rho).\label{e:weqn}
\end{equation}

Let
\begin{equation*}
    Q_{\epsilon}(w)=\mathcal L^{-1}\left(\eta\eta\alpha,|D|]\left(\frac{1}{\eta\alpha} w\right)-\frac 1 2 \left(\frac{w}{\eta\alpha}\right)w+\eta\alpha \tilde g_{\epsilon}\left(\frac{w}{\eta\alpha}+\rho\right)\right).
\end{equation*}
If we can show that $Q_\epsilon$ is a contraction from $X$ onto $H_e^2$,
then together with the fact that $P_\epsilon$ is a contraction on $H^2_e$,
we can show that $P_\epsilon$ is a contraction on $X$.

We now show, for $w_u=\eta\alpha u$, $w_v=\eta\alpha v$ solving~\eqref{e:weqn},
\begin{equation}
        \left\|\eta[\alpha,|D|](u-v)-\frac{1}{2}\eta\alpha\left(u^2-v^2\right)+\eta\alpha\left(\tilde g_\epsilon(u+\rho)-\tilde g_\epsilon(v+\rho)\right)\right\|_{H^1}\le C \| (u-v)\|_{X}\label{e:innerfixedpointboundweighted}
\end{equation}
for small enough values of $\eta$, where the constant $C$ depends on $\eta$, $\delta$ and $\epsilon$.

\textbf{Commutator terms:}
A computation shows that, for $f\in H^2$ that vanishes at infinity,
\begin{equation*}
    \left[\alpha,|D|\right]f=\alpha H(f_\alpha)-H(\partial_\alpha(\alpha f))=-Hf+[\alpha,H]f_\alpha=-Hf+2\int f_\alpha d\alpha =-Hf,
\end{equation*} 
where we work on the Fourier side to conclude that (in the sense of distributions) \begin{equation*}
    \mathcal F [\alpha,H]=[\frac 1 i \partial_\xi, i\operatorname{sgn} \xi]=2\delta_0,
\end{equation*} 
so that \begin{equation*}
[\alpha,H]f_\alpha=\int 1\cdot f_\alpha d\alpha.
\end{equation*}
giving us the estimate
\begin{equation}
    \left\|[\alpha,|D|]f\right\|_{H^2}\le \|f\|_{H^2},\label{e:commutatorweightboundintermediate}
\end{equation}
so that, as long as $\eta$ is chosen small enough, we have 
\begin{equation*}
    \left\|\eta [\alpha,|D|](u-v)\right\|_{H^2}\le \frac{1}{4} \|u-v\|_{H^2}.
\end{equation*}

\textbf{Remaining terms}: the remaining terms can be handled as in the previous unweighted argument, with the $\eta\alpha$ in front placed with whichever difference term is measured $H^1$ or $H^2$.
This proves the rest of~\eqref{e:innerfixedpointboundweighted}.

Arguing with elliptic rgularity as before, we have
\begin{equation*}
    \left\|Q_\epsilon(w_u)-Q_{\epsilon}(w_v)\right\|_{H^2}\le C(\delta,\epsilon)\|w_u-w_v\|_{H^2}.
\end{equation*}
As long as $\delta$ and $\epsilon$ are chosen small enough to ensure the contribution of the non-commutator terms is at least as small as that of the commutator terms, we have that $Q_\epsilon$ is a contraction on $H^2_e$,
and thus $P_\epsilon$ is a contraction on $X$.

We now need to choose a $\delta=\delta(\epsilon)$ such that $P_\epsilon$ sends the ball of radius $\delta$ in $X$ into itself.

Set $\delta=k\epsilon$ for some positive constant $k$ to be chosen later.

We have, arguing as above, 
\begin{equation*}
    \|P_\epsilon(u)\|_{H^2}\lesssim\left(\frac 1 2 \|u^2\|_{H^2}+\|\tilde g_\epsilon(u+\rho)\|_{H^2}\right).
\end{equation*}

For the $u^2$ term, we have
\begin{equation*}
    \|u^2\|_{H^2}\le \|u\|_{H^2}^2<k^2\epsilon^2.
\end{equation*} 

For the terms with no factors of $u$, we have 
\begin{equation*}
    \|\tilde g_\epsilon(\rho)\|_{H^2}=C\epsilon+O_{H^2}(\epsilon^2)
\end{equation*} 
for some positive constant $C$.

For all other terms, there will be at least one factor of $u$.
This factor can be bounded in $H^1$, $L^\infty$, or $H^2$ by a factor of $k\epsilon$, so, coupled with the factors of $\epsilon$ in the definition of $\tilde g_\epsilon$, we can guarantee that 
\begin{equation*}
        \|\tilde g_\epsilon(\rho+u)\|_{H^2}=C\epsilon+O_{H^2}(\epsilon^2).
\end{equation*}

We now work in the weighted space $(\eta\alpha)^{-1}H^2$.
We have
\begin{equation*}
    \|Q_\epsilon(\eta\alpha u)\|_{H^2}\lesssim \left(\frac{1}{2\eta} \|u\alpha u\|_{H^2}+\|\eta[\alpha,|D|]u\|_{H^2}+\|\eta\alpha \tilde g_\epsilon(u+\rho)\|_{H^2}\right).
\end{equation*}
Since $\alpha\rho$ and $\eta\alpha u$ are bounded in $H^2$, we can argue as above.
The only change is the addition of the commutator term which can be handled using the estimate~\eqref{e:commutatorweightboundintermediate}.
We thus have
\begin{equation*}
    \|Q_\epsilon(\eta\alpha u)\|\le C'\epsilon+O_{H^2}(\epsilon^2),
\end{equation*}
so by choosing $k$ to be a sufficiently large numerical constant, we can ensure that for small enough $\epsilon$, 
\begin{equation*}
    \|P_\epsilon(u)\|_{H^2}+\|Q_\epsilon(\eta\alpha u)\|_{H^2}<k\epsilon=\delta
\end{equation*} 
for $u$ in a ball of radius $\delta$ around the origin in $X$.

Plugging this choice of $\delta$ into the right-hand sides of above related inequalities, we see that by picking $\epsilon_0$ small enough, we can guarantee that the constant on the right-hand side of~\eqref{e:fixedpointboundwithconstants} is smaller than $1$ for all $0<\epsilon<\epsilon_0$. 
This means that we have found a $\delta$ such that $P_\epsilon$ is a contraction on the ball of radius $k\epsilon$ around the origin in $X$ for all $\epsilon<\epsilon_0$.

After untangling the definitions and rescalings, this guarantees the existence of a solitary wave of velocity 
\begin{equation*}
    c=-(1+\epsilon)\frac{g}{\gamma}
\end{equation*} 
for the constant vorticity water wave problem for a small open interval $\epsilon\in(0,\epsilon_0)$.

\section{Asymptotic decay of the profile} \label{s:property}

In this section, we give a detailed analysis of the asymptotic decay of the function $\phi$ constructed in Section \ref{s:slowexist}.

For $\epsilon$ small enough, from \eqref{UPhiRelation} we see that the maximal height $\|U\|_{L^\infty}=O(\epsilon)$ is also small.
The velocity is close to $-\frac{g}{\gamma}$, so that $\frac{c^2}{2g}\approx \frac{g}{2\gamma^2}\gg \epsilon$.
The maximal height condition \eqref{MaximalHeight} holds in this case.
By the result in Section \ref{s:ellipticity}, we conclude that $U\in H^N(\mathbb{R})$, for any $N>0$.
Hence, we should also expect $\phi$ to have nice regularity and decay properties when $\epsilon$ is small.

Before going to a more refined asymptotic analysis, we first give a qualitative bound of asymptotic decay for $\phi$. 
We recall the equation \eqref{e:veqn}, which we write as
\begin{equation*}
    -(1+|D|)v=\mathcal N(v).\label{e:vFromNofv}
\end{equation*}
We have
\begin{equation}
    \mathcal N(v)=-\rho v-\frac 1 2 v^2+\frac{\epsilon}{(1+\epsilon)^2}\mathcal N_1(v+\rho)+\frac{\epsilon^2}{(1+\epsilon)^2}\mathcal N_2(v+\rho),\label{e:nonlinearitygeneral}
\end{equation}
where
\begin{equation}
    \mathcal N_1(\phi)=-\phi|D|\phi-\frac 1 2 |D|\phi^2,\qquad \mathcal N_2(\phi)=\left(\frac 1 2 \phi|D|\phi^2-\frac 1 2 \phi^2 |D|\phi -\frac 1 6 |D|\phi^3\right).\label{e:nonlinearityexplicitdecay}
\end{equation}

According to the result in \cite{MR1106251}, the operator $1+|D|$ is an elliptic operator of order $1$ with fundamental solution
\begin{equation*}
    \Phi(\alpha)=\frac 1 \pi \int_0^\infty \frac{\tau e^{-\tau}}{\alpha^2+\tau^2}\, d\tau.
\end{equation*}
$\Phi(\alpha)$  is a positive function that satisfies the properties
\begin{equation*}
    \int_{-\infty}^\infty \Phi(\alpha) d\alpha = 1,\quad \Phi(\alpha) \lesssim \frac{1}{\alpha^2}.
\end{equation*}
Using Lemma \ref{l:structureofmodDrho} and Lemma \ref{l:structureofmodDf} below, a direct computation show that $|\mathcal{N}(v)|\lesssim \frac{\epsilon}{(1+\epsilon)^2} \rho$,
so that
\begin{equation*}
   |v| = |\Phi \ast \mathcal{N}(v)|\leq \Phi \ast |\mathcal{N}(v)| \lesssim  \frac{\epsilon}{(1+\epsilon)^2}\Phi \ast \rho.
\end{equation*}

We bound the convolution $\Phi\ast \rho$ by 
\begin{equation*}
 |\Phi \ast \rho| \lesssim \int \Phi(t)\frac{1}{1+|\alpha-t|^2}dt = \int_{|t-\alpha|\geq \frac{\alpha}{2}} \Phi(t)\frac{1}{1+|\alpha-t|^2}dt + \int_{|t-\alpha|< \frac{\alpha}{2}} \Phi(t)\frac{1}{1+|\alpha-t|^2}dt.
\end{equation*}
For the first integral on the right-hand side,
\begin{equation*}
   \int_{|t-\alpha|\geq \frac{\alpha}{2}}\Phi(t)\frac{1}{1+|\alpha-t|^2}dt \lesssim  \frac{1}{1+\alpha^2}\int\Phi(t) dt = \frac{1}{1+\alpha^2}.
\end{equation*}
For the second integral on the right-hand side, for large $\alpha$
\begin{equation*}
  \int_{|t-\alpha|< \frac{\alpha}{2}} \Phi(t)\frac{1}{1+|\alpha-t|^2}dt \lesssim  \frac{1}{\alpha^2} \int_{\frac{\alpha}{2}}^{\frac{3\alpha}{2}}\frac{1}{1+|\alpha-t|^2}dt \lesssim \pi \frac{1}{1+\alpha^2}.
\end{equation*}
Therefore, we show that $|v|\lesssim \frac{(1+\pi)\epsilon}{(1+\epsilon)^2}\jbx^{-2}.$

In order to give a more precise asymptotic characterization, we work in the weighted Sobolev spaces \begin{equation*}
H^{k,\sigma} = \{ u : (1+|\alpha|^2)^{\frac{\sigma}{2}}u \in H^k  \},
\end{equation*}
and prove the main result of this section, which is a rescaled version of Theorem \ref{TheoremThree}.

\begin{theorem}\label{t:rescaledAsymptoticExpansion}
    Let $\epsilon>0$ be small enough and there exists a solitary wave solution $\phi$ to~\eqref{e:babenkoeqnrescaled}.
    For any positive integer $N\geq 1$, there exists real numbers $a_j$, $1\le j\le N$ depending on $\epsilon$ and a function $g_N\in H^{2,2N}$ such that $\phi$ can be expressed as
    \begin{equation}
        \phi=\rho+\sum_{j=1}^{N} a_j\rho^j+g_N.\label{e:claimedform}
    \end{equation}

\end{theorem}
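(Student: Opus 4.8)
The plan is to prove the expansion~\eqref{e:claimedform} by induction on $N$, bootstrapping the spatial decay one power of $\jbx^{-1}$ at a time using the fundamental solution $\Phi$ of $1+|D|$ and carefully tracking the structure of $|D|$ applied to powers of $\rho$. We already know from the preceding argument that $|v|\lesssim \frac{\epsilon}{(1+\epsilon)^2}\jbx^{-2}$, and in fact $v\in H^{2,2}$ once the stated structural lemmas (Lemma~\ref{l:structureofmodDrho} and Lemma~\ref{l:structureofmodDf}) are in place. The base case $N=1$ amounts to finding $a_1$ so that $g_1:=\phi-\rho-a_1\rho=v-a_1\rho\in H^{2,2}$; since $v$ already lies in $H^{2,2}$ this is immediate with $a_1=0$, but the real content is that the next correction can be peeled off, so it is cleaner to phrase the induction as: assuming $\phi=\rho+\sum_{j=1}^N a_j\rho^j+g_N$ with $g_N\in H^{2,2N}$, we produce $a_{N+1}$ and $g_{N+1}\in H^{2,2N+2}$.

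The inductive step goes as follows. First I would substitute the ansatz $\phi=\rho+\sum_{j=1}^N a_j\rho^j+g_N$ into the rescaled Babenko equation~\eqref{e:babenkoeqnrescaled}, written in the form $-(1+|D|)v=\mathcal N(v)$ with $v=\phi-\rho$ and $\mathcal N$ as in~\eqref{e:nonlinearitygeneral}--\eqref{e:nonlinearityexplicitdecay}. The point is to expand $\mathcal N(v)$ and collect all contributions that decay exactly like $\jbx^{-(2N+2)}$; by the structural lemmas, applying $|D|$ to a function of the form $\rho^k$ (times smooth localized factors) produces again a function decaying like $\jbx^{-2k}$ plus a faster-decaying error of the expected weighted Sobolev regularity, and crucially the leading $\jbx^{-2k}$ part is again a rational multiple of $\rho^k$ up to lower order. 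Products of such terms with $\rho$ or with each other raise the decay power additively. So the ``new'' piece of $\mathcal N(v)$ at order $\jbx^{-(2N+2)}$ is of the form $P_{N+1}\,\rho^{N+1}+(\text{remainder in }H^{1,2N+2})$ for some explicit real coefficient $P_{N+1}$ depending on $\epsilon$ and on $a_1,\dots,a_N$. Then I would invert $1+|D|$: using the identity $(1+|D|)\rho^{N+1}=\rho^{N+1}+|D|\rho^{N+1}$, and again the structural lemma for $|D|\rho^{N+1}$, one can solve $-(1+|D|)(a_{N+1}\rho^{N+1})=P_{N+1}\rho^{N+1}+(\text{faster})$ for $a_{N+1}$ — here one needs that $1+|D|$ acting on the model profile $\rho^{N+1}$ reproduces $\rho^{N+1}$ with a nonzero coefficient to leading order in decay, which follows because $|D|\rho^{N+1}$ decays strictly faster is \emph{false} at the same rate, so one must instead argue that the equation $(1+|D|)f = \rho^{N+1}$ has a solution $f$ whose leading $\jbx^{-(2N+2)}$ asymptotics is a nonzero multiple of $\rho^{N+1}$, via the convolution $f=\Phi\ast\rho^{N+1}$ together with the same split of the convolution integral used above for $\Phi\ast\rho$. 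Setting $g_{N+1}=v-\sum_{j=1}^{N+1}a_j\rho^j$ and applying $(1+|D|)^{-1}=\Phi\ast$ to the remaining right-hand side, which now lies in $H^{1,2N+2}$, gives $g_{N+1}\in H^{2,2N+2}$ by elliptic regularity for $1+|D|$ (order $1$) together with the weighted mapping properties of $\Phi\ast$; the weighted bound $\|\jbx^{2N+2}\Phi\ast h\|_{L^2}\lesssim\|\jbx^{2N+2}h\|_{L^2}$ comes again from the decomposition of the convolution into the near and far regions, exactly as in the base case computation already carried out in the excerpt.

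The main obstacle — and the step that needs the most care — is the bookkeeping of which terms in the nonlinearity $\mathcal N(v)$ actually contribute at the critical decay rate $\jbx^{-(2N+2)}$ and the verification that $|D|$ applied to $\rho^k$ (and to products $\rho^j g_m$) behaves as claimed, i.e.\ produces a leading term that is again a rational multiple of $\rho^k$ plus a genuinely-faster-decaying tail in the right weighted space. This is precisely the content of Lemma~\ref{l:structureofmodDrho} and Lemma~\ref{l:structureofmodDf}, so once those are granted the argument is a clean induction; without them one would have to compute $|D|\rho^k$ more or less explicitly (for instance via the Fourier transform of $\rho^k$, which is a polynomial times a decaying exponential, so that $|\xi|$ times it is again controllable). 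A secondary subtlety is ensuring the coefficient $P_{N+1}$ is well-defined independently of how one groups the lower-order terms, which is automatic once one fixes the convention that $a_1,\dots,a_N$ are determined by the earlier stages of the induction. Finally, translating back from the rescaled profile $\phi$ to $\Im W$ via~\eqref{UPhiRelation} and collecting the coefficients $b_k=\epsilon\,(a_k+[k=1])$ recovers the stated form~\eqref{e:ClaimedFormOne} of Theorem~\ref{TheoremThree}, with $g_N$ inheriting the decay faster than $\jbx^{-2N}$ after the change of variables.
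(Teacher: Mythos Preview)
Your proposal contains a genuine gap: the weighted mapping property you assert for $\Phi\ast$, namely
\[
\|\jbx^{2N+2}\,\Phi\ast h\|_{L^2}\lesssim\|\jbx^{2N+2}h\|_{L^2},
\]
is false for $N\ge 1$. The kernel $\Phi$ decays only like $\alpha^{-2}$, so whenever $\int h\neq 0$ one has $(\Phi\ast h)(\alpha)\sim(\int h)\,\Phi(\alpha)\sim C\alpha^{-2}$ at infinity, \emph{not} $\alpha^{-(2N+2)}$. This is precisely why the paper cannot simply convolve the remainder with $\Phi$ and instead needs Lemma~\ref{l:structureOfInverseOperator}: that lemma absorbs the slowly-decaying tail of $\Phi$ into a polynomial in $\rho$ and only then places the rest in $H^{k+1,2N}$.

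The same misconception appears earlier when you write that $|D|\rho^k$ ``produces again a function decaying like $\jbx^{-2k}$ plus a faster-decaying error.'' Lemma~\ref{l:structureofmodDrho} says $|D|\rho^k=\sum_{j=1}^{k+1}r_j\rho^j$, with the sum beginning at $j=1$; already for $k=1$ one has $|D|\rho=-\rho+\tfrac12\rho^2$. So both $|D|$ and $(1+|D|)^{-1}$ mix all the $\rho$-powers down to $\rho^1$, and your attempt to solve $-(1+|D|)(a_{N+1}\rho^{N+1})=P_{N+1}\rho^{N+1}+\text{faster}$ cannot work: adding $a_{N+1}\rho^{N+1}$ to the ansatz perturbs all of the lower-order coefficients, not just the one at order $N+1$. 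The paper's proof avoids this by never trying to pin down a single new coefficient. Instead it shows that the class ``(polynomial in $\rho$ of degree $\le N$) $+$ (remainder in $H^{\cdot,2N}$)'' is preserved under $\mathcal N$ (via Lemmas~\ref{l:structureofmodDrho} and~\ref{l:structureofmodDf}) and under $(1+|D|)^{-1}$ (via Lemma~\ref{l:structureOfInverseOperator}); the induction upgrades the decay exponent of the remainder from $2(N-1)$ to $2N$ wholesale, with the coefficients $a_j$ allowed to change from step to step.
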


\begin{remark}
From \eqref{e:claimedform}, no odd powers of $\jbx^{-1}$ appear in the asymptotic expansion.
Additionally, the proof of the theorem goes through identically in the case that $\phi$ and $\alpha\phi$ lie in $H^k$ for some $k>2$, with the remainder term $g_{N}$ lying in $H^{k,2N}$.
\end{remark}

To prove this theorem, we will need a few lemmas about the operator $|D|$ and the function $\rho$.
The first result is that the set of polynomials in $\rho: = \frac{4}{1+\alpha^2}$ is closed under $|D|$:
\begin{lemma}\label{l:structureofmodDrho}
    For any integer $k\ge 1$, there exists a sequence of coefficients $r_j$, $1\le j\le k+1$ such that
    \begin{equation*}  |D|\rho^k=\sum_{j=1}^{k+1}r_j \rho^j.
    \end{equation*}
\end{lemma}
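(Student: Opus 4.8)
The plan is to prove the statement by induction on $k$, the key point being that $\rho$ satisfies the algebraic identity $|D|\rho = \rho - \tfrac12\rho^2$ coming from the Benjamin-Ono soliton equation, and that the relation $\rho(\alpha) = 4/(1+\alpha^2)$ together with $|D| = \partial_\alpha H$ lets one compute $|D|$ of any power of $\rho$ explicitly. For the base case $k=1$ we simply read off $|D|\rho = \rho - \tfrac12\rho^2$ directly from the equation $-\rho - |D|\rho + \tfrac12\rho^2 = 0$, which gives $r_1 = 1$, $r_2 = -\tfrac12$.

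For the inductive step, the natural approach is to use the Leibniz-type structure for $|D|$ acting on products, or more robustly to exploit the rational structure. Write $\rho^k = 4^k(1+\alpha^2)^{-k}$. The cleanest route is to use that $H$ applied to rational functions decaying at infinity is again rational: specifically $H\big((1+\alpha^2)^{-1}\big) = -\alpha(1+\alpha^2)^{-1}$ (up to normalization of $H$), and by differentiating and using partial fractions one sees that $H\big((1+\alpha^2)^{-k}\big)$ is $\alpha$ times a polynomial in $(1+\alpha^2)^{-1}$ of degree $k$. Applying $\partial_\alpha$ then produces a polynomial in $(1+\alpha^2)^{-1}$ of degree $k+1$ with no constant term (since $|D|\rho^k$ decays), which translates back into $\sum_{j=1}^{k+1} r_j \rho^j$. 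Alternatively, one can argue purely by induction: assuming $|D|\rho^{k-1} = \sum_{j=1}^{k} r_j^{(k-1)}\rho^j$, write $|D|\rho^k$ in terms of $|D|(\rho \cdot \rho^{k-1})$ using the paraproduct/commutator decomposition together with the identity $\rho \,|D|\rho^{k-1} + \rho^{k-1}|D|\rho - |D|(\rho \cdot \rho^{k-1})$ being expressible via the explicit Fourier/Hilbert-transform computation; but this is messier, so I would favor the direct rational-function computation.

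The main obstacle is making the rational-function computation rigorous and bookkeeping the coefficients: one must verify that $H$ maps the span of $\{\alpha^{\epsilon}(1+\alpha^2)^{-j}\}$ into itself in the appropriate graded sense, and that $\partial_\alpha$ of $\alpha \cdot (\text{polynomial in }(1+\alpha^2)^{-1})$ lands in the polynomial span of $\rho, \rho^2, \dots$ with no constant term. This follows from $\partial_\alpha(1+\alpha^2)^{-1} = -2\alpha(1+\alpha^2)^{-2}$ and $\partial_\alpha(\alpha(1+\alpha^2)^{-1}) = (1+\alpha^2)^{-1} - 2\alpha^2(1+\alpha^2)^{-2} = 2(1+\alpha^2)^{-2} - (1+\alpha^2)^{-1}$, i.e. everything stays inside polynomials in $(1+\alpha^2)^{-1}$, hence inside polynomials in $\rho$; the absence of a constant term is forced by decay at infinity. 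Once this closure property is in hand, existence of the coefficients $r_j$ is immediate and the degree bound $k+1$ follows by tracking how the top-degree term $(1+\alpha^2)^{-(k+1)}$ is generated. I would not attempt to give closed-form expressions for the $r_j$, since only their existence is needed in the sequel.
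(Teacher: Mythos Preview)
Your proposal is correct and follows the same underlying idea as the paper---exploit the rational structure of $\rho$ and the fact that the Hilbert transform preserves the relevant class of rational functions---but the execution differs in a way worth noting. The paper works with the \emph{complex} partial fraction decomposition
\[
\rho^\ell=\sum_{j=1}^\ell \beta_j^{(\ell)}(\alpha-i)^{-j}+\gamma_j^{(\ell)}(\alpha+i)^{-j},
\]
and then uses that $(\alpha\mp i)^{-j}$ are eigenfunctions of $H$ (with eigenvalue $\mp i$), so $H$ acts diagonally on these building blocks and $|D|=\partial_\alpha H$ is computed in one line. Your approach via the real building blocks $(1+\alpha^2)^{-j}$ and $\alpha(1+\alpha^2)^{-j}$ works, but as you correctly anticipate, it forces you to track even and odd pieces simultaneously and to verify closure of $H$ on this two-parameter span---bookkeeping that the complex decomposition avoids entirely. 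The complex route also makes the degree bound $k+1$ immediate, whereas in your setup it requires tracing how the top term is generated.

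Two minor points: your base case has a sign slip (from $-\rho-|D|\rho+\tfrac12\rho^2=0$ one gets $|D|\rho=-\rho+\tfrac12\rho^2$, not $\rho-\tfrac12\rho^2$), and the inductive framing you open with is not actually used in your preferred ``direct rational-function'' argument---the latter is a direct computation for each $k$, not a step from $k-1$ to $k$. Neither affects the correctness of the overall plan.
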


For the second result, we give a decomposition of $\rho^\ell \ast f$. 
\begin{lemma} \label{c:structureofrhofconvolutions}
     Let $f\in H^{k,2N}$ for some positive integers $k,N\ge 1$ be an even function, then there exists a sequence of coefficients $q_j$, $\ell \le j\le N$ and a function $g_N\in H^{\infty,2N}$ such that
    \begin{equation}
        \rho^\ell\ast f=
    \sum_{j=\ell}^{N}q_j\rho^j + g_N.\label{e:structureOfRhoAstf}
    \end{equation}
\end{lemma}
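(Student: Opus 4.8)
The plan is to induct on $N$, using the decay of $f$ together with the "denominator-clearing" identity behind the partial-fraction structure of $\rho$. The starting point is the algebraic fact that $\rho(\alpha)=4/(1+\alpha^2)$ has poles only at $\alpha=\pm i$, so that for any $\ell\ge 1$ the function $\rho^\ell$ is a rational function whose only poles are at $\pm i$, with pole order $\ell$. Consequently every polynomial in $\rho$ of degree $\le N$ lies in the span of $\{\rho,\rho^2,\dots,\rho^N\}$ (once one notes $\rho^0=1$ cannot appear because each $\rho^j$ decays), and conversely $\alpha^{2m}\rho^{\ell+m}$ is, for each $m$, a polynomial combination of $\rho^\ell,\dots,\rho^{\ell+m}$ since $\alpha^2\rho=4-\rho$ up to constants; more precisely $\alpha^2\rho^{j}= 4\rho^{j-1}-\rho^{j}$. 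This last identity is the engine of the whole argument.

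First I would establish the base localization estimate: since $f\in H^{k,2N}$ is even and $\rho^\ell(\alpha)\lesssim \jbx^{-2\ell}$, the convolution $\rho^\ell\ast f$ is a smooth even function, and a standard splitting of the convolution integral into the regions $|t-\alpha|\ge |\alpha|/2$ and $|t-\alpha|<|\alpha|/2$ (exactly as done for $\Phi\ast\rho$ earlier in this section) shows $|\rho^\ell\ast f(\alpha)|\lesssim \jbx^{-2\ell}$, and differentiating under the integral gives the same for its derivatives, so $\rho^\ell\ast f\in H^{\infty,2\ell}$. If $\ell\ge N$ this already gives the claim with no $\rho^j$ terms and $g_N=\rho^\ell\ast f$. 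Next, for the inductive step, I would extract the leading term: write
\begin{equation*}
    \rho^\ell\ast f = q_\ell\,\rho^\ell + \bigl(\rho^\ell\ast f - q_\ell\,\rho^\ell\bigr),
\end{equation*}
and choose the constant $q_\ell$ so that the bracketed remainder decays one order faster, i.e. lies in $H^{\infty,2\ell+2}$. To see the right choice of $q_\ell$, expand $\rho^\ell\ast f$ at spatial infinity: since $\int f$, $\int \alpha^2 f$, etc. are finite moments (here evenness kills the odd moments), one has an asymptotic expansion $\rho^\ell\ast f(\alpha) = \bigl(\int f\bigr)\rho^\ell(\alpha)\cdot(\text{leading coeff}) + O(\jbx^{-2\ell-2})$, obtained by Taylor-expanding $\rho^\ell(\alpha-t)$ in $t$ and using $\rho^\ell(\alpha)=4^\ell\jbx^{-2\ell}+O(\jbx^{-2\ell-2})$; matching the $\jbx^{-2\ell}$ coefficient fixes $q_\ell$ uniquely. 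The difference $\rho^\ell\ast f - q_\ell\rho^\ell$ is then even and $O(\jbx^{-2\ell-2})$ with all derivatives, so it belongs to $H^{\infty,2\ell+2}$; applying the inductive hypothesis (now with $\ell+1$ in place of $\ell$ and the same $N$) to this new function — which we must re-expose as a convolution, see below — produces $\sum_{j=\ell+1}^N q_j\rho^j + g_N$ with $g_N\in H^{\infty,2N}$, and combining finishes the induction.

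The one genuine subtlety — and the step I expect to be the main obstacle — is that after subtracting $q_\ell\rho^\ell$ the remainder is no longer literally of the form $\rho^{\ell+1}\ast(\text{something in }H^{k,2N})$, so the induction as phrased does not directly apply to it. The clean fix is to restate the induction in the form actually needed: prove by downward induction on $\ell$ (from $\ell=N$ down to $\ell=1$) the statement "every even $h\in H^{k,2\ell}$ with the property that $h=\rho^\ell\ast f$ for some even $f\in H^{k,2N}$, \emph{or more generally} every even $h\in H^{\infty,2\ell}$, can be written as $\sum_{j=\ell}^N q_j\rho^j+g_N$ with $g_N\in H^{\infty,2N}$." In fact, once the base localization estimate is in hand, the convolution structure is only used to get the initial $H^{\infty,2\ell}$ membership of $\rho^\ell\ast f$; thereafter the argument is purely about peeling off $\rho^j$ from even functions in $H^{\infty,2\ell}$ one decay-order at a time, using the moment expansion to pick each coefficient and Lemma \ref{l:structureofmodDrho} nowhere — so I would phrase the lemma's proof as: (i) $\rho^\ell\ast f\in H^{\infty,2\ell}$ by the localization estimate; (ii) a separate sublemma that any even $h\in H^{\infty,2m}$ equals $q_m\rho^m + h'$ with $h'\in H^{\infty,2m+2}$, proved by the Taylor/moment expansion above; (iii) iterate (ii) from $m=\ell$ to $m=N-1$, collecting the $\rho^j$ and leaving $g_N:=h^{(N-\ell)}\in H^{\infty,2N}\subset H^{\infty,2N}$. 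Evenness is used throughout precisely to guarantee the odd-order corrections vanish, which is what keeps every extracted power at $\rho^j$ rather than introducing, say, $\alpha\jbx^{-2j}$ terms; this matches the parenthetical remark after the theorem that only even powers of $\jbx^{-1}$ occur.
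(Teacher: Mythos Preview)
Your proposed sublemma (ii) is false, and this is not a technicality but the heart of the matter. Recall that $H^{k,2m}$ requires $(1+\alpha^2)^m h\in H^k$; since $(1+\alpha^2)^m\rho^m=4^m$ is a nonzero constant, $\rho^m\notin H^{0,2m}$. Hence if $h\in H^{\infty,2m}$ and you write $h=q_m\rho^m+h'$, the requirement $h'\in H^{\infty,2m+2}\subset H^{0,2m}$ forces $q_m\rho^m=h-h'\in H^{0,2m}$, i.e.\ $q_m=0$, and then $h'=h$ gains nothing. Concretely, a function like $h(\alpha)=(1+\alpha^2)^{-m}\bigl(1+\sin\log(1+\alpha^2)\bigr)$ is even, smooth, has $|\partial^j h|\lesssim\jbx^{-2m}$ for all $j$, yet admits no such decomposition because $(1+\alpha^2)^m h$ has no limit at infinity. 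The same bookkeeping shows your step (i) is wrong: the localization argument gives only the pointwise bound $|\rho^\ell\ast f|\lesssim\jbx^{-2\ell}$, and since the leading asymptotic is $(\int f)\,\rho^\ell$ with $\int f$ generically nonzero, $\rho^\ell\ast f\notin H^{0,2\ell}$.

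The moral is that the convolution structure cannot be discarded after one step: what makes $\rho^\ell\ast f$ special among functions with $\jbx^{-2\ell}$ decay is precisely that its full asymptotic expansion at infinity is governed by the even moments of $f$ against the Taylor coefficients of $\rho^\ell$. The paper exploits this directly: it takes a spatial dyadic decomposition $f=\sum_j f_j$, and in the dominant region $2^j\ll|\alpha|$ expands
\[
\frac{1}{(\alpha-\beta)^{2m}}=\sum_{n=2m}^{2N+2}\mu_n\,\alpha^{-n}(-\beta)^{n-2m}+(\text{remainder of order }-2N-3),
\]
integrates against $f_j(\beta)$ (evenness kills the odd $n$), and then rewrites each surviving $\alpha^{-n}$ as a polynomial in $\rho$ plus an $H^{\infty,2N}$ error. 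The regions $2^j\approx|\alpha|$ and $2^j\gg|\alpha|$ are shown by direct size estimates to land in $H^{0,2N}$ after summing in $j$. Regularity of $g_N$ is then obtained by throwing derivatives onto the smooth factor $\rho^\ell$ in the convolution. If you want to salvage your inductive scheme, the honest version of the step is to show that $\rho^\ell\ast f - q_\ell\rho^\ell$ is a finite combination of $\rho^{\ell+1}\ast(\beta^2 f)$, $\rho^{\ell+2}\ast(\beta^2 f)$, plus a controlled remainder, and then iterate with $\beta^2 f\in H^{k,2N-2}$; but carrying this out carefully amounts to the paper's expansion done one order at a time.
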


A direct consequence of above lemma where we choose $f = \rho^n$ is that   the set of polynomials in $\rho$ is closed under convolution, up to an error term controllable in an appropriate $H^{k,\sigma}$ space.
\begin{corollary}\label{l:structureofrhoconvolutions}
    Let $1\le m\le n$ be two positive integers, and let $N\ge m$ be a third positive integer.
    Then there exists a sequence of coefficients $p_j$, $m\le j\le N$ and a function $g_N\in H^{\infty,2N}$ such that
    \begin{equation}
        \rho^m\ast \rho^n=\sum_{j=m}^N p_j\rho^j + g_N.
    \end{equation}
\end{corollary}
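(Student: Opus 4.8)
The plan is to deduce this corollary directly from Lemma~\ref{c:structureofrhofconvolutions} by choosing $f=\rho^n$, so the main work is really to verify that the hypotheses of that lemma are satisfied and to collect the resulting terms. First I would observe that $\rho(\alpha)=\tfrac{4}{1+\alpha^2}$ is smooth, even, and decays like $\jbx^{-2}$; hence $\rho^n$ is smooth, even, and decays like $\jbx^{-2n}$, so for \emph{any} $k\ge 1$ and any $N$ with $N\le n$ we have $\rho^n\in H^{k,2N}$ (indeed $\rho^n\in H^{\infty,2n}\subset H^{k,2N}$ whenever $N\le n$). Since the corollary hypothesizes $1\le m\le n$ and $N\ge m$, I would split into the two regimes $N\le n$ and $N>n$; in the first, Lemma~\ref{c:structureofrhofconvolutions} applies immediately with $\ell=m$ and $f=\rho^n\in H^{k,2N}$.

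For the regime $N>n$, I would note that $\rho^n\in H^{\infty,2n}$ but possibly not in $H^{k,2N}$, so I cannot feed $N$ directly into the lemma. Instead I would apply Lemma~\ref{c:structureofrhofconvolutions} with the truncated exponent $N'=n$ (legitimate since $m\le n=N'$ and $f=\rho^n\in H^{k,2n}=H^{k,2N'}$), obtaining
\begin{equation*}
    \rho^m\ast\rho^n=\sum_{j=m}^{n}p_j\rho^j+\tilde g,\qquad \tilde g\in H^{\infty,2n}.
\end{equation*}
This already has the claimed polynomial-in-$\rho$ structure; the only gap is that the error $\tilde g$ only lies in $H^{\infty,2n}$, not $H^{\infty,2N}$. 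To upgrade it I would use a bootstrap on the convolution identity itself: writing $h=\rho^m\ast\rho^n$, one has $h=\Phi\ast\bigl((1+|D|)h\bigr)$ is the wrong operator here, so instead I would exploit that $\rho^m\ast\rho^n$ can be computed more precisely. Actually the cleanest route is to re-examine the proof of Lemma~\ref{c:structureofrhofconvolutions}: the decomposition $\rho^\ell\ast f=\sum_{j=\ell}^N q_j\rho^j+g_N$ with $g_N\in H^{\infty,2N}$ holds for $f\in H^{k,2N}$, and since $\rho^n$ has \emph{arbitrarily fast} polynomial decay away from the scale where it is comparable to $\rho^n$, one can push $N$ as large as one likes at the cost of introducing more coefficients $p_j$ for $n<j\le N$ which, by the fast decay of $\rho^n$, all turn out to vanish; thus the sum genuinely truncates at $j=n$ and the remainder lies in $H^{\infty,2N}$ for every $N$. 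Concretely, in the expansion I would keep $p_j=0$ for $n<j\le N$ and absorb everything beyond scale $\jbx^{-2n}$ into $g_N$, which is consistent because $\rho^m\ast\rho^n$ decays at worst like $\jbx^{-2\min(m,n)}=\jbx^{-2m}$ near the ``leading'' scale while its genuinely-small tail decays faster than any prescribed power once the $\rho^j$, $j\le n$, terms are subtracted.

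The main obstacle I anticipate is precisely this last bookkeeping point: making sure that subtracting the finitely many $\rho^j$ terms ($m\le j\le n$) leaves a remainder that decays faster than $\jbx^{-2N}$ for \emph{every} $N$, not just up to the decay rate of $f=\rho^n$. This is a genuine feature of $\rho^n$ (it is Schwartz-like in its polynomial decay, being a rational function vanishing to infinite polynomial order at infinity in the weighted scale), but one must be careful that the proof of Lemma~\ref{c:structureofrhofconvolutions} does not secretly cap the remainder's decay at the decay of $f$; re-reading that proof, the cap comes only from $f$'s weighted regularity, and since here $f=\rho^n\in\bigcap_N H^{k,2N}$, no cap is imposed. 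Everything else—evenness of $\rho^m\ast\rho^n$ (convolution of even functions is even), and the algebra structure that keeps us inside polynomials in $\rho$—follows immediately from the corresponding properties already recorded for $\rho$ and from Lemma~\ref{l:structureofmodDrho}.
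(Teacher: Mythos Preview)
Your core approach---apply Lemma~\ref{c:structureofrhofconvolutions} with $\ell=m$ and $f=\rho^n$---is exactly the paper's one-line proof, and it goes through cleanly whenever $\rho^n\in H^{k,2N}$, which holds for $N\le n-1$. (A small slip: you claim $\rho^n\in H^{\infty,2n}$, but $(1+\alpha^2)^n\rho^n=4^n$ is a nonzero constant, not in $L^2$, so the borderline $N=n$ already fails.)

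You are right to flag the regime $N\ge n$ as needing more; the paper simply does not address it. But your resolution contains a genuine error. You assert that $\rho^n$ is ``Schwartz-like in its polynomial decay'' and conclude $\rho^n\in\bigcap_N H^{k,2N}$. This is false: $\rho^n(\alpha)=4^n(1+\alpha^2)^{-n}$ decays \emph{exactly} like $\jbx^{-2n}$, no faster, so $\rho^n\in H^{k,2N}$ only for $N\le n-1$. The ``cap'' in Lemma~\ref{c:structureofrhofconvolutions} coming from the weighted regularity of $f$ is real, and your last paragraph does not circumvent it.

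To actually cover $N\ge n$, one needs a different mechanism. One clean route is on the Fourier side: by \eqref{e:structureOfRhoJ} each $\widehat{\rho^j}$ is a polynomial in $|\xi|$ times $e^{-|\xi|}$, so $\widehat{\rho^m\ast\rho^n}=\widehat{\rho^m}\,\widehat{\rho^n}$ is a polynomial in $|\xi|$ times $e^{-2|\xi|}$. Matching its Taylor expansion at $\xi=0$ in powers of $|\xi|$ against the functions $|\xi|^{j-1}e^{-|\xi|}$ (exactly as in the proofs of Lemmas~\ref{l:structureofmodDf} and~\ref{l:structureOfInverseOperator}) leaves a remainder in $C^{2N}$ with rapid decay, whose inverse transform lies in $H^{\infty,2N}$. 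Equivalently, on the physical side one matches the asymptotic expansion of $\rho^m\ast\rho^n$ in even inverse powers of $\alpha$ (all even moments of $\rho^n$ being finite, all odd ones vanishing) against $\sum_{j=m}^N p_j\rho^j$ through order $\alpha^{-2N}$.
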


The next result shows how the operator $|D|$ acts on the even functions of $H^{k,2N}$:
\begin{lemma}\label{l:structureofmodDf}
    Let $f\in H^{k,2N}$ for some positive integers $k,N\ge 1$ be an even function.
    Then there exists a sequence of coefficients $b_j$, $1\le j\le N$ and a function $g_N\in H^{k-1,2N}$ such that
    \begin{equation*}
        |D|f=\sum_{j=1}^N b_j\rho^j+g_N.
    \end{equation*}
\end{lemma}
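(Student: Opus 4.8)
The plan is to pass to the Fourier side, where $\widehat{|D|f}(\xi)=|\xi|\hat f(\xi)$, and to isolate the singular behavior at $\xi=0$, which is the only obstruction to fast decay of $|D|f$. First I would record the regularity of $\hat f$. Since $f\in H^{k,2N}$ with $k,N\ge 1$, multiplication by the bounded multiplier $\langle\alpha\rangle^{-2N}$ shows $f\in H^{k}$, and by Plancherel and elliptic regularity for $(1-\partial_\xi^2)^N$ one gets $\hat f\in H^{2N}(\mathbb R_\xi)$ with, more precisely, $\langle\xi\rangle^{k}\partial_\xi^{2\ell}\hat f\in L^2$ for $0\le \ell\le N$; in particular $\hat f\in C^{2N-1}$. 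Because $f$ is even, $\hat f$ is even, so its Taylor polynomial at $0$ contains only even powers: $\hat f(\xi)=\sum_{j=0}^{N-1}c_j\xi^{2j}+R(\xi)$, where $R\in H^{2N}_{\mathrm{loc}}$ vanishes to order $\ge 2N$ at the origin. Consequently, near $\xi=0$,
\[
|\xi|\hat f(\xi)=\sum_{j=0}^{N-1}c_j|\xi|^{2j+1}+|\xi|R(\xi),
\]
while away from $\xi=0$ the function $|\xi|\hat f$ has the regularity of $\hat f$ and loses exactly one power of decay, lying in $\langle\xi\rangle^{-(k-1)}L^2$ together with $2N$ weighted $\xi$-derivatives there.

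Next I would exploit the structure of $\widehat{\rho^i}$. As $\rho^i=4^i(1+\alpha^2)^{-i}$ is smooth and decays like $4^i|\alpha|^{-2i}$, one has $\widehat{\rho^i}(\xi)=P_i(|\xi|)e^{-|\xi|}$ with $\deg P_i=i-1$ (an $i$-fold convolution of $e^{-|\cdot|}$), and — the $2i$-th moment of $\rho^i$ being the first divergent even moment — the Taylor expansion of $\widehat{\rho^i}$ at $0$ has the form $\widehat{\rho^i}(\xi)=(\text{smooth even function})+c_i|\xi|^{2i-1}+O(|\xi|^{2i+1})$ with $c_i\neq 0$. This triangular structure makes the matching immediate: choose $b_1,\dots,b_N$ inductively so that the singular ($|\xi|$-odd-power) part of $\sum_{i=1}^N b_i\widehat{\rho^i}$ agrees with $\sum_{j=0}^{N-1}c_j|\xi|^{2j+1}$ through order $|\xi|^{2N-1}$. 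Indeed $b_1=c_0/c_1$; then, since $\widehat{\rho^i}$ with $i\ge 3$ contributes nothing at order $|\xi|^3$, the coefficient of $|\xi|^3$ determines $b_2$; continuing, the system is lower triangular with nonzero diagonal entries $c_i$. Set $g_N:=|D|f-\sum_{i=1}^N b_i\rho^i$, which is even, so that $|D|f=\sum_{i=1}^N b_i\rho^i+g_N$ holds by definition.

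It remains to check $g_N\in H^{k-1,2N}$, i.e. $\langle\xi\rangle^{k-1}\partial_\xi^{2\ell}\widehat{g_N}\in L^2$ for $0\le\ell\le N$. Near $\xi=0$, by the matching the singular part of $\widehat{g_N}$ is $|\xi|R(\xi)+O(|\xi|^{2N+1})$; since $R$ and its derivatives up to order $2N-1$ vanish at $0$, and since $\partial_\xi^{2\ell}|\xi|^{2m+1}$ produces a derivative of a Dirac mass only when $m\le\ell-1\le N-1$ — exactly the modes that were matched — the distributions $\partial_\xi^{2\ell}\widehat{g_N}$, $\ell\le N$, carry no Dirac mass or derivative thereof, hence lie in $L^2_{\mathrm{loc}}$. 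Away from $\xi=0$ the $\widehat{\rho^i}$ are Schwartz, and $\partial_\xi^{2\ell}(|\xi|\hat f)=|\xi|\partial_\xi^{2\ell}\hat f+2\ell\,\operatorname{sgn}(\xi)\,\partial_\xi^{2\ell-1}\hat f$, which lies in $\langle\xi\rangle^{-(k-1)}L^2$ by the weighted bounds on $\hat f$ (interpolating for the odd-order derivative). Patching a cutoff around $0$ with the far estimate yields $\langle\xi\rangle^{k-1}\partial_\xi^{2\ell}\widehat{g_N}\in L^2(\mathbb R)$ for all $\ell\le N$, which is the claim.

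I expect the main obstacle to be the bookkeeping in this last step — tracking the exact cancellation of the Dirac-mass terms and the single power of decay lost when $|D|$ acts — and, conceptually, the observation that evenness of $f$ is precisely what forces the singular part of $|\xi|\hat f$ to be built from the odd powers $|\xi|^{2m+1}$, which are exactly the singularities the span of $\{\widehat{\rho^i}\}$ reproduces. A secondary point needing justification is the claim that the leading singular term of $\widehat{\rho^i}$ is $c_i|\xi|^{2i-1}$ with $c_i\neq 0$; this follows from the modified-Bessel-function expression for $\widehat{(1+\alpha^2)^{-i}}$, which makes $P_i(|\xi|)e^{-|\xi|}$ explicit, or from a scaling argument applied to $\int\rho^i(\alpha)\cos(\alpha\xi)\,d\alpha$. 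If one prefers to sidestep the triangular structure, one may instead verify directly that the $N\times N$ matrix of the relevant Taylor coefficients of $\widehat{\rho^1},\dots,\widehat{\rho^N}$ is nonsingular, its entries forming a banded section of the Toeplitz matrix with symbol $e^{2z}$.
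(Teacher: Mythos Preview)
Your argument is correct, but it proceeds differently from the paper. The paper decomposes the \emph{symbol} of the operator rather than the Fourier transform of the input: using that $\widehat{|D|^{j-1}\rho}=|\xi|^{j-1}e^{-|\xi|}$, it writes
\[
|\xi|=c_0+\sum_{j=1}^{2N}c_j|\xi|^{j-1}e^{-|\xi|}+R_{2N}(\xi)
\]
with $R_{2N}\in C^{2N}$, so that $|D|$ acts as a multiple of the identity plus convolution with a polynomial in $\rho$ plus convolution with a kernel that maps $H^{k,2N}$ to $H^{k-1,2N}$. The convolutions $\rho^\ell\ast f$ are then handled by the earlier lemma on $\rho^\ell\ast f$. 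Your approach instead Taylor-expands $\hat f$ at the origin and matches the resulting $|\xi|^{2j+1}$ singularities directly against the singular parts of $\widehat{\rho^i}$, which avoids any appeal to the convolution lemma. The trade-off is that the paper's basis $\{|D|^{j-1}\rho\}$ makes the triangular structure immediate (one sees $|\xi|^{j-1}e^{-|\xi|}$ directly), whereas your basis $\{\rho^i\}$ requires the nontrivial observation that the odd powers $|\xi|,|\xi|^3,\dots,|\xi|^{2i-3}$ cancel in the expansion of $\widehat{\rho^i}=P_i(|\xi|)e^{-|\xi|}$; you correctly flag this as the point needing justification. Either the Bessel-function identity you mention or, more in the spirit of the paper, the change of basis $\rho^j=\sum_{i=0}^{j-1}p_i|D|^i\rho$ (Lemma~\ref{l:structureofmodDrho} applied inductively) settles it cleanly. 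Your direct verification that $|\xi|R(\xi)\in H^{2N}_{\mathrm{loc}}$ using the vanishing of $R^{(m-1)}(0)$ for $m\le 2N$ is a nice self-contained replacement for the paper's more modular bookkeeping.
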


The final lemma, in a similar spirit to the preceding lemma, shows that $(1+|D|)^{-1}$ preserves expressions of the form~\eqref{e:claimedform} up to raising the regularity of the remainder by one:

\begin{lemma}\label{l:structureOfInverseOperator}
    Let $f\in H^{k,2N}$ for some positive integers $k,N\ge 1$.
    Then there exists a sequence of coefficients $c_j$, $1\le j\le N$ and a function $g_N\in H^{k+1,2N}$ such that
    \begin{equation}
        (1+|D|)^{-1}f=\sum_{j=1}^N c_j\rho^j +g_N.\label{e:claimedFormInverse}
    \end{equation}
\end{lemma}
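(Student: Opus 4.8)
The plan is to imitate the proof of Lemma~\ref{c:structureofrhofconvolutions}, since $(1+|D|)^{-1}$ acts by convolution with the fundamental solution $\Phi$ recorded above: $(1+|D|)^{-1}f=\Phi\ast f$. Thus the statement is essentially that convolution against $\Phi$ preserves expressions of the form~\eqref{e:claimedform}, up to a remainder one degree smoother, and the only extra input beyond Lemma~\ref{c:structureofrhofconvolutions} is the elliptic regularity gain: since $1+|D|$ is elliptic of order $1$, if $f\in H^{k,2N}$ then (ignoring decay) $\Phi\ast f\in H^{k+1}$, and we will need to see that the decay $\jbx^{2N}$ is preserved for the remainder piece.

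The key steps, in order, are as follows. First I would split $f$ into a main part capturing its slow decay and a fast-decaying remainder. Concretely, because $f\in H^{k,2N}$ need not itself be a polynomial in $\rho$, I would first apply Lemma~\ref{c:structureofrhofconvolutions} with $\ell=1$ to $\rho\ast f$ — no, more directly: I would not need a preliminary decomposition of $f$, but would instead work with $\Phi$ directly. The cleaner route: write $\Phi\ast f$ and observe that for the purpose of extracting the $\rho^j$ tail it suffices to understand $\Phi\ast\rho^j$ for each $j$, then reduce the general $f$ to this case. So, Step 1: prove that for each integer $j\ge 1$, $\Phi\ast\rho^j=\sum_{i=1}^{j}d_i\rho^i+h_j$ with $h_j\in H^{\infty,2j}$ (in fact $H^{\infty,2N}$ for any $N$, since $\rho^j$ itself is Schwartz-like with all the needed decay); this is proved exactly as the $\Phi\ast\rho$ bound in the text — split the convolution integral into the region $|t-\alpha|\ge\alpha/2$ and its complement — iterated, or more efficiently by noting $\rho^j$ is a rational function decaying like $\jbx^{-2j}$ and using a partial-fractions/residue computation for $\Phi\ast\rho^j$ together with $\int\Phi=1$. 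Step 2: for general $f\in H^{k,2N}$ even, use Lemma~\ref{l:structureofmodDf}-style reasoning is not available (that was for $|D|$); instead, I would decompose $f$ itself. Write the Taylor-type expansion of $f$ at infinity is not legitimate since $f$ is only Sobolev. The correct decomposition is: since $f\in H^{k,2N}$, we have $f=\sum_{j=1}^{N}e_j\rho^j+\tilde f$ is \emph{not} automatic. So Step 2 must instead be: apply $(1+|D|)^{-1}$ directly to $f$, bound $\|(1+|D|)^{-1}f\|_{H^{k+1}}\lesssim\|f\|_{H^k}$ by ellipticity, and then separately control the weighted norm $\|\jbx^{2N}(1+|D|)^{-1}f\|$ by commuting $\jbx^{2N}$ past the convolution: $\jbx^{2N}(\Phi\ast f)=\Phi\ast(\jbx^{2N}f)+[\jbx^{2N},\Phi\ast]f$, where the commutator is an integral operator with kernel $\Phi(\alpha-t)(\jbx^{2N}-\langle t\rangle^{2N})$, whose symbol-like growth is one power lower, producing the $\rho^j$ terms by the same region-splitting argument as in Step 1.

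In practice I would organize it as: (i) reduce to $f$ of the form $\rho^j$ plus a genuinely $H^{k,2N}$ remainder by first writing $(1+|D|)f_0=f$ has no clean form, so actually bootstrap — set $u=(1+|D|)^{-1}f$, so $(1+|D|)u=f$, i.e. $u+|D|u=f$, so $|D|u=f-u$; apply Lemma~\ref{l:structureofmodDf} to... this requires knowing $u$ is already of the claimed form, circular. The honest argument is the direct convolution one: $u=\Phi\ast f$, split $\Phi\ast f$ into $\int_{|t-\alpha|\ge\alpha/2}$ and $\int_{|t-\alpha|<\alpha/2}$ exactly as done for $\Phi\ast\rho$ in the preceding text; the near-diagonal part inherits the decay and smoothness of $f$ (giving a piece in $H^{k+1,2N}$ after using $\int\Phi=1$ to subtract off the local average), while the far part, after expanding $\Phi(\alpha-t)$ and using $\Phi(\alpha)\lesssim\alpha^{-2}$ together with repeated integration of the polynomial-weighted tails of $f$, contributes a finite linear combination of $\jbx^{-2j}=$ const$\cdot\rho^j$ plus faster-decaying smooth error. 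Collecting, $u=\sum_{j=1}^N c_j\rho^j+g_N$ with $g_N\in H^{k+1,2N}$.

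The main obstacle will be Step 2 — showing the remainder picks up the \emph{full} weight $\jbx^{2N}$ rather than only $\jbx^{2N-1}$ or $\jbx^{2N}/\log$, because $\Phi$ decays only like $\alpha^{-2}$ (borderline integrable against $\jbx^{-2}$, producing logarithms if one is careless). The fix is to peel off successive even-power tails $c_j\rho^j$ greedily: having subtracted $c_1\rho+\cdots+c_{j}\rho^{j}$, the running remainder decays like $\jbx^{-2j-2}$, and $\Phi\ast(\jbx^{-2j-2}\,\text{stuff})$ decays like $\jbx^{-2}$ against that, i.e. still only $\jbx^{-2}$ — so one must instead exploit that the \emph{moments} $\int t^{2i}\Phi(t)\,dt$ are finite for all $i$ (clear from the explicit formula, since $\int_0^\infty \tau e^{-\tau}\cdot(\text{rational in }\tau)\,d\tau$ converges) and do a genuine multipole expansion $\Phi(\alpha-t)=\sum_{i<M}\frac{(-t)^i}{i!}\Phi^{(i)}(\alpha)+$ remainder, matching this against the weighted decay of $f$. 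Each $\Phi^{(i)}(\alpha)$ is again $O(\jbx^{-2})$ and, crucially, even/odd according to $i$, so only even $i$ survive when paired with the even function $f$, and $\Phi^{(2i)}(\alpha)$ turns out to be a rational combination of the $\rho^j$ — this is where Lemma~\ref{l:structureofmodDrho} or a direct residue computation closes the loop, and it is the one calculation I would actually need to carry out carefully.
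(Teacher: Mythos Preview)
Your approach is physical-space: analyze $\Phi\ast f$ directly via region-splitting and a multipole expansion of the kernel. The paper instead works on the Fourier side, which sidesteps the difficulty you correctly flag (that $\Phi$ decays only like $\alpha^{-2}$, borderline for preserving $\jbx^{-2N}$ decay).

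There are two concrete errors in your final paragraph that break the argument as written. First, the moments $\int t^{2i}\Phi(t)\,dt$ are \emph{not} finite for $i\ge 1$: since $\Phi(t)\sim(\pi t^2)^{-1}$ at infinity, already $\int t^2\Phi(t)\,dt$ diverges. Your parenthetical justification checks convergence of the inner $\tau$-integral in the formula $\Phi(\alpha)=\frac1\pi\int_0^\infty\frac{\tau e^{-\tau}}{\alpha^2+\tau^2}\,d\tau$, not of the outer $t$-integral. (In a multipole expansion $\Phi(\alpha-t)=\sum_i\frac{(-t)^i}{i!}\Phi^{(i)}(\alpha)+\cdots$ integrated against $f$, what must be finite are the moments of $f$, not of $\Phi$; those are indeed finite up to order $2N-1$, so this part is fixable, but not as you wrote it.) Second, $\Phi^{(2i)}(\alpha)$ is not a rational function and hence not a finite combination of the $\rho^j$: the Fourier transform of $\Phi$ is $(1+|\xi|)^{-1}$, not a polynomial in $|\xi|$ times $e^{-|\xi|}$, so Lemma~\ref{l:structureofmodDrho} does not apply to it. At best $\Phi$ has an \emph{asymptotic} expansion $\Phi(\alpha)\sim\sum_{n\ge1}d_n\alpha^{-2n}$, and turning that into a decomposition with a remainder genuinely in $H^{k+1,2N}$ is exactly the missing work.

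The paper's route avoids both issues by decomposing the symbol rather than the kernel. One writes
\[
(1+|\xi|)^{-1}=\sum_{j=0}^{2N-1}(-1)^j|\xi|^j+\frac{|\xi|^{2N}}{1+|\xi|},
\]
then re-expresses the polynomial part as $\sum_{j=1}^{2N}c_j|\xi|^{j-1}e^{-|\xi|}$ plus a $C^{2N}$ correction, by matching Taylor expansions at $\xi=0$. The point is that $|\xi|^{j-1}e^{-|\xi|}=\mathcal F(|D|^{j-1}\rho)$, which by Lemma~\ref{l:structureofmodDrho} is exactly a polynomial in $\rho$ on the physical side; convolution of such a polynomial with $f$ is then handled by Lemma~\ref{c:structureofrhofconvolutions}. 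The remaining symbol $R_{2N}(\xi)$ is $C^{2N}$ and decays like $|\xi|^{-1}$, so its inverse Fourier transform decays faster than $\alpha^{-2N}$ and gains one derivative, putting that contribution directly in $H^{k+1,2N}$. This is cleaner than a physical-space multipole expansion precisely because the non-smoothness of the symbol at $\xi=0$ (the odd powers of $|\xi|$) is what generates the slow $\alpha^{-2}$ tail of $\Phi$, and on the Fourier side one can subtract it off exactly.
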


We prove Theorem \ref{t:rescaledAsymptoticExpansion} assuming above auxiliary results, and the proofs of these lemmas are deferred to the end of this section.

\begin{proof}[Proof of Theorem~\ref{t:rescaledAsymptoticExpansion}]
The asymptotic expansion is constructed inductively.
For our base case $N=1$, we have shown the pointwise bound  $|v(\alpha)| \lesssim \jbx^{-2}.$

\par We first obtain a pointwise bound for $\mathcal{N}(v)$.
We know $\rho$ is in $H^{k,1}$ for any $k$.
If $v$ is in $H^{k,1}$, then, since 
\begin{equation*}
    \alpha |D|v=|D|(\alpha v)+[\alpha,|D|]v=|D|(\alpha v)-Hv
\end{equation*} for all $v$ decaying at infinity, $|D|v$ is in $H^{k-1,1}$.
The product of two elements of $H^{k,\sigma}$ is in $H^{k,2\sigma}$, as long as $k\ge 1$.
Similarly the product of three elements of $H^{k,\sigma}$ is in $H^{k,3\sigma}$.

These facts and the explicit formulas~\eqref{e:nonlinearitygeneral} and~\eqref{e:nonlinearityexplicitdecay} tell us that for $v\in H^{2,1}$, we can write $\mathcal N(v)$ in the form
\begin{equation*}
    \mathcal N(v)=f_1+|D|f_2,
\end{equation*}
where $f_1\in H^{1,2}$ and $f_2\in H^{2,2}$ by grouping all terms with a $|D|$ in front into the $|D|f_2$ and all products where $|D|$ either doesn't appear or appears in the middle into $f_1$.

Lemma~\ref{l:structureofmodDrho} tells us that all the terms fully in terms of $\rho$ can be expressed as a power series in $\rho$, doing the truncation at order $1$ and placing the remaining terms in $H^{1,2}$.
Lemma~\ref{l:structureofmodDf} tells us that we can write \begin{equation*}
    |D|f_2=b_1 \rho +h_1
\end{equation*}
for some $h_1\in H^{1,2}$.

Given an expansion to order $N-1$ of the form~\eqref{e:claimedform}
\begin{equation*}
    v=\sum_{j=1}^{N-1}c_j\rho^j+g_{N-1},
\end{equation*}
the nonlinear expressions in~\eqref{e:nonlinearityexplicitdecay} are all one of
\begin{itemize}
    \item a product of powers of $\rho$,

    \item a product of $\rho$ with $g_{N-1}$, and thus in $H^{k,2N}$, or

    \item the product $g_{N-1}^2$, which is in $H^{k,2N}$,
\end{itemize}
or the derivative of such a term.

Applying Lemmas~\ref{l:structureofmodDrho} or~\ref{l:structureofmodDf} as appropriate, we get an expansion to order $N$ of the form~\eqref{e:claimedform}
\begin{equation*}
    \mathcal N(v)=\sum_{j=1}^{N}\tilde c_j\rho^j+\tilde g_{N}
\end{equation*}
for some $\tilde g_{N}\in H^{k-1,2N}$.

We now apply Lemma~\ref{l:structureOfInverseOperator} to this expansion of $\mathcal N(v)$ and get an expression of the form
\begin{equation*}
    v=\sum_{j=1}^{N}c_j\rho^j+g_{N}
\end{equation*}
as desired.
\end{proof}

\begin{proof}[Proof of Lemma~\ref{l:structureofmodDrho}]
    The function $\rho$ can be rewritten as
    \begin{equation*}
        \rho(\alpha)=\frac{1}{i}\left(\frac{2}{\alpha-i}-\frac{2}{\alpha+i}\right).   
    \end{equation*}
  As a consequence of fractional decomposition, for any positive integer $\ell$,  $\rho^\ell$ can be decomposed as the sum
    \begin{equation}
      \rho^{\ell}=\sum_{j=1}^{\ell}\beta^{(\ell)}_j (\alpha-i)^{-j}+\gamma^{(\ell)}_j (\alpha+i)^{-j},\label{e:partialfractionsrho}
    \end{equation}
    where $\beta^{(\ell)}_j$ and $\gamma^{(\ell)}_j$ are coefficients such that  $\beta^{(\ell)}_j=\gamma^{(\ell)}_j$ when $j$ is even and $\beta^{(\ell)}_j=-\gamma^{(\ell)}_j$ when $j$ is odd.
    Furthermore, we can write 
    \begin{equation*}
        \partial_\alpha\rho^k=\sum_{j=2}^{k+1}-j\beta^{(\ell)}_j (\alpha-i)^{-j}-j\gamma^{(\ell)}_j (\alpha+i)^{-j}.
    \end{equation*}
An important observation for the Hilbert transform is that 
\begin{equation}
        H\left((\alpha-i)^{-j}\right)=-i(\alpha-i)^{-j},\quad H\left((\alpha+i)^{-j}\right)=i(\alpha+i)^{-j}.\label{e:hilbertofinversepower}
    \end{equation}
   Applying~\eqref{e:hilbertofinversepower} and extracting powers of $\rho$ starting at $\rho^{k+1}$ using~\eqref{e:partialfractionsrho}, we can construct a sequence $r_j$ such that
    \begin{equation*}
        |D|\rho^k=\sum_{j=1}^{k+1}r_j\rho^j
    \end{equation*}
    as desired.
\end{proof}

In the following proofs, the error term $g_N$ can change from line to line as additional terms are placed into it, but it will always lie in the space claimed in the statement of the corresponding lemma.

We also remark that we will bound the $H^{k,\sigma}$ norm of a function $f$ by bounding either
\begin{equation*}
    \left\|\left(1-\partial_\alpha^2\right)^{\frac k 2}\left(1+|\alpha|^2\right)^{\frac\sigma 2}f\right\|_{L^2}
\end{equation*}
or
\begin{equation*}
    \left\|\left(1+|\alpha|^2\right)^{\frac\sigma 2}\left(1-\partial_\alpha^2\right)^{\frac k 2}f\right\|_{L^2}
\end{equation*}
depending on which is most convenient at a given point in the proof; both are equivalent to bounding the $H^{k,\sigma}$ norm because of the commutation relation between differentiation and multiplication.

\begin{proof}[Proof of Lemma~\ref{c:structureofrhofconvolutions}]
Taking a spatial dyadic decomposition \[f\approx\sum_{j=0}^\infty f_{j},\quad f_{0}=\chi (\alpha)f,\quad f_{j}=\left(\chi \left(\frac{\alpha}{2^j}\right)- \chi \left(\frac{\alpha}{2^{j-1}}\right)\right)f,\]
    where $\chi$ is a positive smooth even bump function that is adapted to $\{ |\alpha|\leq 2\}$ and equals $1$ on the ball $\{ |\alpha|\leq 1\}$. 
    Because $\alpha^{2N}f\in H^{k}$, we have
    \begin{equation*}
        \sum_{j=0}^\infty 2^{2Nj} \|f_{j}\|_{H^k}\approx\|f\|_{H^{k,2N}}<\infty.
    \end{equation*}
    
    We work first at the $L^2$ level and handle derivatives later.
    We evaluate the expression $\left(\rho^\ell\ast f\right)(\alpha)$ by looking at the contributions from $f_j$ for $2^j<|\alpha|$, $2^j\approx |\alpha|$, and $2^j> |\alpha|$ separately.
    In the following computations, the implicit constant in the symbol $\lesssim$ is always an absolute numerical constant not depending on $j$, $\alpha$, or $f_2$ and suppressed to make the computation clearer.

    \textbf{Case 1:} $2^j\ll |\alpha|$.
    On the support of $f_j$, we can bound $\alpha-\beta$ by a constant multiple of $\alpha$, so we have, also using H\"older's inequality,
    \begin{equation*}
        \left|\int \frac{ f_j(\beta)}{\left((\alpha-\beta)^2+1\right)^\ell}\,d\beta\right| \lesssim\frac{1}{(\alpha^2+1)^\ell}\int  f_j(\beta)\,d\beta\lesssim 2^{j/2}\rho^\ell\|f_j\|_{L^2}.
    \end{equation*}

    We can replace $\rho^\ell(\alpha-\beta)$ with powers of $(\alpha-\beta)^{-1}$ greater than or equal to $2\ell$.
    We write
    \begin{equation*}
        \frac{1}{\left((\alpha-\beta)^2+1\right)^\ell}=\frac{1}{(\alpha-\beta)^{2\ell}}+\frac{P_{2\ell-2}(\alpha-\beta)}{\left((\alpha-\beta)^2+1\right)^\ell(\alpha-\beta)^{2\ell}},
    \end{equation*}
    where $P_{2\ell-2}$ is a polynomial of degree $2\ell-2$, so that the difference is of order $-2\ell-2$.
    Repeating this process with the error term every step until the error is of order $-2N-2$ and thus can be safely added to the final remainder $g_N$, we can replaced convolution with $\rho^\ell$ by convolution with a series of even powers of $\alpha^{-1}$.

    We can write
    \begin{equation}
        \frac{1}{(\alpha-\beta)^{2m}}=\frac{1}{\alpha^{2m}}\frac{1}{\left(1-\frac \beta \alpha\right)^{2m}}=\sum_{n=2m}^{2N+2}\mu_n\frac{1}{\alpha^n}(-\beta)^{n-2m}+\frac{\mu_{2N+3}(-\beta)^{2N+3-2m}}{\alpha^{2N+3}\left(1-\frac{\beta}{\alpha}\right)^{2m}}\label{e:geometricSeries}
    \end{equation}
    for some coefficients $\mu_n$.
    Again the convolution with the final term can be placed in the overall error $g_N$.
    
    Because $f$ is assumed to be even, an odd power of $\beta$ integrated against $f_j(\beta)$ will give $0$, and only the even powers will remain.
    For even $n$, we can perform the same decomposition as above and write
    \begin{equation*}
        \int f_j(\beta)\mu_n\frac{1}{\alpha^n}(-\beta)^{n-2m} d\beta =\frac{\nu_{n,j}}{\alpha^n}=\sum_{p=\frac n 2}^N \nu_{p,n}\rho^p+\eta_{2N+2},
    \end{equation*}    
where the remainder $\eta_{2N+2}$ is of order $-2N-2$ and can thus be added to the overall error $g_N$.

    Performing this decomposition for each $n$ in~\eqref{e:geometricSeries}, and then doing that expansion for each $m$ between $\ell$ and $N$, inclusive, we get
    \begin{equation*}
        \rho^\ell \ast f_j = \sum_{m=\ell}^{N} \lambda_m \rho^m + g_N, \qquad g_N\in H^{k,2N}.
    \end{equation*}

    \textbf{Case 2:} $2^j\approx |\alpha|$.  We have, by Young's convolution inequality,
    \begin{equation*}
        \|\rho^\ell\ast  f_j\|_{L^\infty}\lesssim \|\rho^\ell\|_{L^2}\| f_j\|_{L^2}\lesssim \| f_j\|_{L^2}.
    \end{equation*}

    \textbf{Case 3:} $2^j\gg |\alpha|$.
    Here we have, bounding the $\rho^\ell$ term directly by $2^{-2j\ell}$ and using H\"older's inequality 
    \begin{align*}
        \left|\int \frac{ f_j(\beta)}{\left(1+(\alpha-\beta)^2\right)^\ell}d\beta\right|&\lesssim 2^{-2j\ell}2^{j/2}\| f_j\|_{L^2}.
    \end{align*}

    Summing these up over $j$, letting $m$ be the value of $j$ for which $2^{m-1}<|\alpha|<2^m$, we have a pointwise bound
    \begin{equation}
        |\left(\rho^\ell \ast  f\right)(\alpha)|\lesssim \rho^\ell(\alpha)\sum_{2^j<|\alpha|} 2^{j/2}\|\partial^i_\alpha f_j\|_{L^2}+\| f_{m}\|_{L^2}+\sum_{2^{j-1}\ge |\alpha|} 2^{j(1-4\ell)/2}\| f_j\|_{L^2}.
        \label{e:lemmaClaimIntermediate}
    \end{equation}

    We now need to show that for $j > \left\lceil\log_2 |\alpha|\right\rceil$, $\rho^\ell \ast f_j$ can be grouped into the error term $g_N$ lying in $H^{k,2N}$.

    For the case $2^j \approx |\alpha|$, we have the estimate 
    \begin{equation}
        \int \alpha^{4N} \left\| f_{m} \right\|_{L^2}^2\, d\alpha\lesssim \|f\|_{L^2}^2<\infty.\label{e:lemmaMiddleTerm}
    \end{equation}
    For the case $2^j \gg |\alpha|$, we have, bounding the integral in $\alpha$ by a sum and rearranging,
    \begin{align*}
        \int \alpha^{4N} \left(\sum_{2^{j-1}\ge|\alpha|}2^{j(1-4\ell)/2} \|f_j\|_{L^2}\right)^2\, d\alpha&\lesssim \sum_{m=0}^\infty 2^{m(1+4N)}\left(\sum_{j=m}^\infty 2^{j(1-4\ell)/2}\|f_j\|_{L^2}\right)^2\\
        &\lesssim \sum_{m=0}^\infty \left(\sum_{j=m}^\infty 2^{j(1-2\ell+2N)}\|f_j\|_{L^2}\right)^2\\
        &\lesssim \sum_{m=0}^\infty \left(\sum_{j=m}^\infty 2^{j(2-4\ell)}\right)\left(\sum_{j=m}^\infty 2^{2Nj}\|f_j\|_{L^2}\right)^2.
    \end{align*}
    The first inner sum is a convergent geometric series converging to a constant multiple of $2^{-m(4\ell-2)}$, and the second inner sum can be controlled by $\|f\|_{H^{0,2N}}$.
    This means that
    \begin{equation}
        \int \alpha^{4N} \left(\sum_{2^{j-1}\ge|\alpha|}2^{j(1-4\ell)/2} \|f_j\|_{L^2}\right)^2\, d\alpha\lesssim \|f\|_{H^{0,2N}}^2<\infty.\label{e:lemmaFinalTerm}
    \end{equation}
    Combining this with~\eqref{e:lemmaMiddleTerm} and applying Cauchy-Schwarz inequality, we see that the decomposition~\eqref{e:lemmaClaimIntermediate} gives the desired decomposition of $\rho^\ell\ast f$ from~\eqref{e:structureOfRhoAstf} at the $L^2$ level.

    We now prove the remainder term lies in $H^{2K,2N}$ for any positive integer $K$.
    By interpolation, this ensures that $g_N\in H^{\infty,2N}$ as desired.

    A computation shows that 
    \begin{equation}
        \partial^{2K}_\alpha\rho^\ell=\sum_{j=\ell+K}^{\ell+2K} p_j\rho^j\label{e:structureOfPartial2krho}
    \end{equation}
    for some real coefficients $p_j$.
    Writing
    \begin{equation*}
        g_N=\left(\rho^\ell\ast f-\sum_{j=\ell}^N q_j\rho^j\right)
    \end{equation*}
    and differentiating $2K$ times, placing all $2K$ derivatives on the $\rho^\ell$ term in the convolution, we have
    \begin{equation*}
        \partial_\alpha^{2K}g_N=\left(\sum_{j=\ell+K}^{\ell+2K} p_j\rho^j\ast f - \sum_{j=\ell}^{N}\sum_{n=K}^{2K}q_{j,n}\rho^{j+n}\right),
    \end{equation*}
    where $q_{j,n}$ are real coefficients arising from applying~\eqref{e:structureOfPartial2krho} to $q_j\rho^{j}$ in the expansion up to order $N$ of $g_N$.
    
    Repeating the argument above shows that the remainder when subtracting this new power series in $\rho$ from the sum of the convolutions with powers of $\rho$ must lie in $H^{0,2N}$.
    This means that $g_N\in H^{\infty,2N}$ as desired.    
\end{proof}

\begin{proof}[Proof of Lemma~\ref{l:structureofmodDf}]
    By applying Lemma~\ref{l:structureofmodDrho} inductively, we know that for each integer $i\ge 0$, $|D|^i\rho$ is a polynomial in $\rho$ of degree $i+1$.
    This means that for any integer $j\ge 1$, we can find a sequence of coefficients $p_i$ such that 
    \begin{equation}
        \rho^j=\sum_{i=0}^{j-1}p_i |D|^i \rho.\label{e:structureOfRhoJ}
    \end{equation}

    Using the fact that the Fourier transform of $\rho$ equals $\sqrt{\frac{\pi}{2}}e^{-|\xi|}$,~\eqref{e:structureOfRhoJ} tells us that the Fourier transform of a polynomial in $\rho$ is given by a sum of terms of the form $|\xi|^ie^{-|\xi|}$:
    \begin{equation*}
        \mathcal F\left(\sum_{i=1}^j p_i \rho^i\right)=
        \sum_{i=0}^{j-1} q_i |\xi|^i e^{-|\xi|}.
    \end{equation*}

Homogeneity considerations tell us that, heuristically, $|D|$ should act like convolution with $\alpha^{-2}$.
    To make this more precise and give a full asymptotic expansion, we re-express its symbol $|\xi|$ in a careful way to obtain a more detailed understanding of the (mildly singular) low-frequency behavior.
    
    Using the Taylor expansions at the origin of the terms $|\xi|^i e^{-|\xi|}$, we can find coefficients $c_j$ depending on $N$ and a $C^{2N}$ function $R_{2N}$ such that

    \begin{equation}
        |\xi|=c_0+\sum_{j=1}^{2N} c_j|\xi|^{j-1}e^{-|\xi|}+R_{2N}(\xi).\label{e:structureOfModXi}
    \end{equation}
    Taking the inverse Fourier transform, we see that we can represent $|D|$ as a sum of convolution with a multiple of the $\delta$ distribution, convolution with a polynomial in $\rho$, and convolution with a remainder term $\check R_{2N}(\alpha)$.
    
    Because $R_{2N}(\xi)$ grows like $|\xi|$ asymptotically, convolution with $\check R_{2N}$ loses exactly one derivative.
    Because $R_{2N}(\xi)$ is $2N$-times continuously differentiable in $\xi$, its kernel decays strictly faster than $\alpha^{2N}$ and thus convolution with $\check R_{2N}$ preserves spatial decay of order $\langle \alpha \rangle^{-2N}$.
    This means that $\check R_{2N}$ sends $H^{k,2N}$ to $H^{k-1,2N}$.
    
    Since convolution with $\delta$ is the identity, it remains to analyze the effect of convolution with a polynomial in $\rho$.
    Convolution with a smooth function does not lose any derivatives, so we only need to consider how convolution with a power of $\rho$ affects spatial decay.

   Given any function $f\in H^{k,2N}$, we can construct the expansion~\eqref{e:structureOfModXi} of $|\xi|$ to order $2N$ and extract the coefficient on $\rho^j$ in the final decomposition that comes from~\eqref{e:structureOfRhoAstf} at each order $j$.
    
    The final remainder $g_N$ is constructed from the $\delta$ and $\check R_{2N}$ terms, the errors from each of the powers $\rho^j$, and the terms of the form $\rho^j$ for $j>N$.
    Considering the spaces these contributions lie in, we see that $g_N\in H^{k-1,2N}$ as claimed.
\end{proof} 

\begin{proof}[Proof of Lemma~\ref{l:structureOfInverseOperator}]
    As in the proof of the preceding lemma, we approximate the symbol  $(1+|\xi|)^{-1}$ by functions of the form $|\xi|^{j-1}e^{-|\xi|}$ to handle the (mildly singular) behavior at the origin, where the symbol is not smooth,
    \begin{equation}
        \frac{1}{1+|\xi|}=\sum_{j=0}^{2N-1} (-1)^j|\xi|^j+\frac{|\xi|^{2N}}{1+|\xi|}.\label{e:lemma54firstExpansion}
    \end{equation}

    Taking Taylor expansions around the origin, we can write
    \begin{equation}
        \sum_{j=0}^{2N-1} (-1)^j|\xi|^j=\sum_{j=1}^{2N}c_j|\xi|^{j-1}e^{-|\xi|}+\tilde R_{2N}(\xi),\label{e:lemma54secondExpansion}
    \end{equation}
    where $\tilde R_{2N}(\xi)$ is $2N$ times continuously differentiable.
    Note that unlike in the previous proof, the constant coefficient $c_0$ is zero.
    
    The initial error term $\frac{|\xi|^{2N}}{1+|\xi|}$ is also $2N$ times continuously differentiable since the factors of $\xi$ in the numerator absorb the $\delta^{j}$-type singularities from derivatives falling on the denominator.
    Moreover, all the $|\xi|^{j-1}e^{-|\xi|}$ terms decay at infinity faster than any polynomial, so combining the expansions~\eqref{e:lemma54firstExpansion} and~\eqref{e:lemma54secondExpansion}, we have a sequence of coefficients $c_j$ such that
    \begin{equation*}
        \frac{1}{1+|\xi|}=\sum_{j=1}^{2N}c_j|\xi|^{j-1}e^{-|\xi|}+R_{2N}(\xi),
    \end{equation*}
    where $R_{2N}$ is a $C^{2N}$ function decaying like $|\xi|^{-1}$ for large $\xi$.

    By repeatedly applying Lemma~\ref{l:structureofmodDrho} and taking the inverse Fourier transform, we see that we can represent $(1+|D|)^{-1}$ as a sum of convolution with a polynomial in $\rho$ and convolution with a remainder term $\check{R}_{2N}(\alpha)$.

    Convolution with a polynomial in $\rho$ gives an expansion of the desired form~\eqref{e:claimedFormInverse} with an error term $g_N\in H^{k+1,2N}$ by Lemma~\ref{l:structureofrhoconvolutions}.

    The remainder term $R_{2N}(\xi)$ is in $C^{2N}$ and decays like $|\xi|^{-1}$, so convolution with $\check R_{2N}$ preserves spatial decay and gains one derivative in $L^2$, so that $\check R_{2N}$ is smoothing of order $1$ and $\check R_{2N}(\alpha)\ast f\in H^{k+1,2N}$. 

    Summing the contributions to the error from the convolutions with powers of $\rho$ (which is in $H^{\infty,2N}$) and the error from convolution with $\check R_{2N}$, we have $g_N\in H^{k+1,2N}$ as desired.
\end{proof}

\section{Continuation of solitary waves for velocity} \label{s:perturbation}

In this section, we discuss the continuation problem to solitary waves with higher speed.
By Theorem~\ref{t:TheoremTwo} we know that for velocities close to $-\frac g \gamma$, a solitary wave exists, and the construction guarantees that it depends continuously on the velocity. 
We now ask to what extent this family of solitary waves can be extended continuously as a function of the velocity.

We first investigate the local problem, finding conditions under which the existence of a solitary wave at some velocity $c_1$ in the above range implies the existence of a nearby solitary wave at a close velocity $c_2$. 
Using a continuity argument, the local result then 
implies a sufficient continuation criteria for the 
solitary waves family, and the failure of the local result gives some possible features of the solitary wave of maximal speed (if it exists).
We conclude with the proof of Theorem~\ref{t:maximal} characterizing the behavior near or at the endpoint velocity.

To set the stage for our results, we start
with the Babenko equation at a velocity $c_1$ 
as above, as in \eqref{e:BabenkoEqnLR}.
\begin{equation*}
    (g+ c_1\gamma-c_1^2|D|)U_1 = - \frac{\gamma^2}{2}U_1^2-gU_1|D|U_1 -\frac{g}{2}|D|U_1^2 +\frac{\gamma^2}{2}\left(U_1|D|U_1^2 - U_1^2|D|U_1 - \frac{1}{3}|D|U_1^3\right).
\end{equation*}
Given a  solution $U_1$ of velocity $c_1$, a key role is played by the associated linearized operator $\cL_{U_1}$, defined by
\begin{equation}
    \begin{aligned}
    \cL_{U_1}w&=\left(g+c_1\gamma+\gamma^2U_1\right)w-c_1^2|D|w\\
    &+g\left(U_1 |D| w +  |D| U_1 w+|D|(U_1 w)\right)\\
    &-\frac{\gamma^2}{2} \left(2U_1 |D|(U_1 w)+w|D|U_1^2-U_1^2|D|w-2U_1w|D|U_1-|D|(U_1^2w)\right),
    \end{aligned}\label{e:linearizedExtension}
\end{equation}
which will be used in a fixed point argument to find a solitary wave solution close to $U_1$ with velocity $c_2$.

The operator $\mathcal L_{U_1}$ is self-adjoint in $L^2$ because $|D|$ is self-adjoint and the coefficients given in terms of $U_1$ and its fractional derivatives are all real.

Using the paradifferential calculus as in Section \ref{s:ellipticity}, we can distribute $|D|$ to each factor like $\partial_\alpha$.
The third line of \eqref{e:linearizedExtension} has commutator structures
and will not contribute to the top-order behavior of $\cL_{U_1}$, so that $\cL_{U_1}$ is elliptic as long as $\sup_\alpha U_1 < \frac{c_1^2}{2g}$.
For $c_1$ close enough to $-\frac{g}{\gamma}$, 
\begin{equation*}
    \sup_\alpha U_1\lesssim |g+c_1\gamma|\ll \frac{c_1^2}{2g},
\end{equation*} so that $\cL_{U_1}$ is elliptic for velocities close to the critical velocity.

Moreover, because the solitary wave $U_1$ decays at infinity and $\cL_{U_1}$ is elliptic, its continuous spectrum is determined purely by the constant-coefficient portion $g+c_1\gamma-c_1^2|D|$,
so that the continuous spectrum of $\cL_{U_1}$ is 
$(-\infty,g+c_1\gamma]$.

To analyze the discrete spectrum of $\cL_{U_1}$, we perform the rescalings from Section~\ref{s:slowexist}, writing
\begin{equation*}
    U_1=\frac{g+c_1\gamma}{\gamma^2}\phi\left(\frac{(g+c_1\gamma)\alpha}{c_1^2}\right),
\end{equation*}
and using the fixed-point construction of Section~\ref{s:slowexist} to write
\begin{equation}
    U_1=\frac{g+c_1\gamma}{\gamma^2}\rho\left(\frac{(g+c_1\gamma)\alpha}{c_1^2}\right)+O_{L^\infty}\left((g+c_1\gamma)^2\right),
\end{equation}
we can view $\cL_{U_1}$ as a perturbation of a rescaled version of $-1-|D|+\rho$, where the perturbation coming from the difference between $U_1$ and (rescaled) $\rho$ and the higher-order terms in $\cL_{U_1}$ are all $O_{L^\infty}\left((g+c_1\gamma)^2\right)$.

We can now use the upper semicontinuity of the spectrum to guarantee that for $c_1$ close enough to $-\frac g \gamma$, the discrete spectrum remains close to that of $-1-|D|+\rho$.

The discrete spectrum of $-1-|D|+\rho$ has only one zero eigenvalue, with corresponding eigenvector $\rho'$.
This eigenvalue can be associated to the translation symmetry of the equation, so by working in the class of even functions (which is preserved under $|D|$ and multiplication, and which contains $U_1$), we can guarantee that the discrete spectrum of $\cL_{U_1}$ is bounded away from $0$ and thus that $\cL_{U_1}$ is invertible. 

\begin{proposition}\label{p:localExtension}
Suppose that for some fixed nonzero vorticity $\gamma$ we have an even solitary wave solution $U_1$ of velocity $c_1$ in the weighted Sobolev
space $X$. 
Assume in addition that the self-adjoint operator $\cL_{U_1}$ arising from linearizing~\eqref{e:BabenkoEqnLR} around the solution $U_1$
 has no zero eigenvalues
(when restricted to even functions).
Then for the velocity $c_2$ in a small neighborhood of $c_1$, we also have (an even) solitary wave in the same space, depending smoothly on the velocity $c_2$.

The size $\epsilon'$ of the interval around $c_1$ of possible velocities depends on the norm of the initial solitary wave profile $\|U_1\|_X$,
the distance between the maximal height of $U_1$ and the maximal height $\frac{c_1^2}{2g}$, and the smallest absolute value $|\lambda_m|$ of an eigenvalue (other than the trivial zero eigenvalue with corresponding eigenvector $V_1$) of $\mathcal L_{U_1}$. 
\end{proposition}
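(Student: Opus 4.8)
The plan is to run a fixed-point argument for the perturbation $w := U_2 - U_1$, with the linearized operator $\cL_{U_1}$ of~\eqref{e:linearizedExtension} playing the role that the constant-coefficient operator $\mathcal L$ played in Section~\ref{s:slowexist}. Subtracting the Babenko equation~\eqref{e:BabenkoEqnLR} at velocity $c_1$ from the one at velocity $c_2$ and writing $U_2 = U_1 + w$, the part of the difference that is linear in $w$ is exactly $\cL_{U_1}w$, and the remainder splits into a source term proportional to $c_2 - c_1$ and a genuinely quadratic/cubic expression $\mathcal Q(w)$ in $w$:
\begin{equation*}
    \cL_{U_1} w = \mathcal Q(w) - (c_2 - c_1)\bigl(\gamma U_1 - (c_1 + c_2)|D|U_1\bigr) - (c_2 - c_1)\bigl(\gamma w - (c_1 + c_2)|D|w\bigr),
\end{equation*}
where $\mathcal Q$ collects terms such as $-\tfrac{\gamma^2}{2}w^2 - g\,w|D|w - \tfrac g 2 |D|w^2$ and the cubic corrections. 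Since $w$ will be small in $H^2 \hookrightarrow W^{1,\infty}$, each term of $\mathcal Q$ lies in $H^1$ with norm controlled quadratically by $\|w\|_{H^2}$, and the first-order $w$-terms carrying a factor $c_2 - c_1$ are a small perturbation kept on the right. A fixed point of $w \mapsto \cL_{U_1}^{-1}(\text{right-hand side})$ is then an even solitary wave of velocity $c_2$.

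The key input is the quantitative invertibility of $\cL_{U_1}$. As recorded before the statement, for admissible $c_1$ the operator $\cL_{U_1}$ is self-adjoint on $L^2$, its top-order symbol is $(2gU_1 - c_1^2)|\xi|$ (the $U_1^2|\xi|$ contributions from the cubic terms canceling, as in the paradifferential bookkeeping of Section~\ref{s:ellipticity}), so it is elliptic of order $1$ precisely when $\sup_\alpha U_1 < c_1^2/(2g)$; its continuous spectrum is $(-\infty, g + c_1\gamma]$ and hence bounded away from $0$ by the sign condition $g + c_1\gamma < 0$; and, restricted to even functions, its discrete spectrum is finite and, by the standing hypothesis, omits $0$. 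Letting $\lambda_m$ be the nonzero eigenvalue of smallest modulus, the spectral gap gives $\|\cL_{U_1}^{-1}h\|_{L^2} \lesssim \max(|\lambda_m|^{-1}, |g+c_1\gamma|^{-1})\|h\|_{L^2}$ on even functions, and then the elliptic regularity of Section~\ref{s:ellipticity} upgrades this to $\|\cL_{U_1}^{-1}h\|_{H^2} \lesssim \|h\|_{H^1}$, with implicit constants depending only on $|\lambda_m|$, $|g+c_1\gamma|$, and the gap $c_1^2/(2g) - \sup_\alpha U_1$.

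With $\cL_{U_1}^{-1}$ under control, the contraction and self-mapping estimates in $H^2_e$ follow exactly the scheme of Section~\ref{s:slowexist}: the bilinear and trilinear bounds give a Lipschitz constant of size $O(\|w\|_{H^2} + \|w'\|_{H^2} + \|U_1\|_{H^2} + |c_2 - c_1|)$ for $\mathcal Q(w) - \mathcal Q(w')$, while the $(c_2-c_1)$-source contributes $O(|c_2-c_1|)$ to the size of the image, so that choosing the radius $\delta \sim |c_2 - c_1|$ closes the argument for $|c_2 - c_1|$ below some threshold $\epsilon'$. Passing to the weighted space $X$ is done as in Section~\ref{s:slowexist} by multiplying the equation by $\eta\alpha$: the resulting commutator terms are controlled using $[\alpha, |D|] = -H$ together with the commutators of $\alpha$ with the coefficients of $\cL_{U_1}$, which are of order $0$ because $\alpha$ only lands on the $|D|$ factors and on the (weighted) $\alpha$-dependence of $U_1 \in X$; thus for small $\eta$ the map contracts on a small ball of $X$. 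Evenness of the fixed point $w(c_2)$ is automatic, since $\cL_{U_1}$, $\mathcal Q$ and the source all preserve even functions and the fixed point is unique; and smooth dependence on $c_2$ follows from the implicit function theorem applied to the smooth (polynomial) map $(w, c_2) \mapsto \cL_{U_1}w - (\text{right-hand side})$ on $X \times \mathbb R$, whose $w$-differential at $(0, c_1)$ is the invertible operator $\cL_{U_1}$ — or, equivalently, by differentiating the fixed-point relation. Tracking the constants, $\epsilon'$ depends exactly on $\|U_1\|_X$ (nonlinear estimates), the maximal-height gap (ellipticity of $\cL_{U_1}$), and $|\lambda_m|$ (the inverse bound), as asserted.

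I expect the main obstacle to be the interplay between the quasilinear structure of the Babenko equation and the weighted estimates. The $\mathcal Q$-terms containing $|D|w$ are harmless once $w$ is known to be small, but $\cL_{U_1}$ itself carries the \emph{non-small} coefficient $U_1$ in its principal part, so one has to verify carefully — repeating the cancellation of the $U_1^2|\xi|$ contributions from Section~\ref{s:ellipticity} — that $\cL_{U_1}$ is genuinely elliptic with the advertised symbol, and then check that the $\alpha$-weight commutators preserve this structure and do not spoil the elliptic-regularity gain in the weighted norm. A secondary difficulty, already isolated in the hypothesis, is the need for a quantitative lower bound on the modulus of the nonzero (even) eigenvalues of $\cL_{U_1}$: near the critical velocity this comes for free from upper semicontinuity of the spectrum around that of $-1-|D|+\rho$, but for general $c_1$ it must simply be assumed, which is exactly why it appears in the statement.
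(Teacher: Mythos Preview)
Your proposal is correct and follows essentially the same route as the paper: subtract the Babenko equations at the two velocities, isolate the linearized operator $\cL_{U_1}$, invert it on even functions using the spectral hypothesis together with the elliptic regularity of Section~\ref{s:ellipticity}, and run the contraction in $X$ exactly as in Section~\ref{s:slowexist}, with $\delta'\sim|c_2-c_1|$. Your bookkeeping of what controls $\epsilon'$ (the norm $\|U_1\|_X$, the maximal-height gap governing ellipticity, and the smallest nonzero eigenvalue $|\lambda_m|$) matches the paper's, and you add the explicit appeal to the implicit function theorem for smooth dependence, which the paper leaves implicit.
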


\begin{proof}
We seek a second solution $U_2$ with velocity $c_2$ close to $c_1$, which would solve
\begin{equation*}
    (g+ c_2\gamma-c_2^2|D|)U_2 = - \frac{\gamma^2}{2}U_2^2-gU_2|D|U_2 -\frac{g}{2}|D|U_2^2 +\frac{\gamma^2}{2}\left(U_2|D|U_2^2 - U_2^2|D|U_2 - \frac{1}{3}|D|U_2^3\right).
\end{equation*}

Letting $w=U_2-U_1$, and subtracting the equation satisfied by $U_1$ from the equation satisfied by $U_2$, we get the difference equation
\begin{align}
    \cL_{U_1}w&=-(c_2-c_1)(\gamma-(c_2+c_1)|D|)(U_1+w)+\mathcal N(w,U_1),\label{e:extensionEqn}
    \end{align}
where $\mathcal N(w,U_1)$ represents the terms with at least two factors of $w$ and arising from the quadratic and cubic parts of the difference of the two Babenko equations for $U_1$ and $U_2$.

Assuming the hypotheses of the proposition, $\mathcal L_{U_1}^{-1}$ is well-defined, so the solution of \eqref{e:linearizedExtension} is a fixed point of
\begin{equation*}
    P_{U_1}(w)=\mathcal L_{U_1}^{-1}\left(-(c_2-c_1)(\gamma-(c_2+c_1)|D|)(U_1+w)+\mathcal N(w,U_1)\right).
\end{equation*}
We can perform a fixed point argument as in Section~\ref{s:slowexist} for small $\epsilon'=|c_2-c_1|$ and $w$ in a small ball of radius $\delta'=k\epsilon'$ around the origin in the weighted Sobolev space $X$ given by~\eqref{e:fixedpointspace} as long as neither of the following happen:
\begin{enumerate}
    \item The operator $\mathcal L_{U_1}$ becomes non-elliptic.
    This occurs if and only if 
    \begin{equation*}
        \sup\limits_\alpha U_1\ge \frac{c^2}{2g}
    \end{equation*}
    by Proposition~\ref{p:maximumheight}.

    \item The operator $\mathcal L_{U_1}$ develops another zero eigenvalue.
    The zero eigenvalue of the initial operator $\mathcal L = (-1-|D|+\rho)$ corresponds to the translational symmetry of the problem, so restricting to even functions enables the argument to go through.
    A new zero eigenvalue of $\mathcal L_{U_1}$ would require a more detailed understanding of the spectrum of $\mathcal L_{U_1}$ to handle.
\end{enumerate}

As long as neither of these two conditions are satisfied, we can extend the range of allowed velocities beyond the range given by Theorem~\ref{t:TheoremOno}.
We can construct solitary waves with velocities lying in an open interval 
$\left(c_M,-\frac{g}{\gamma}\right)$ or $\left(-\frac g \gamma,c_M\right)$
where $|c_M|$ is the supremum of possible speeds and the order of the endpoints depends on the sign of $\gamma$.

\textit{A-priori}, we do not know if the sum of the lengths of the intervals of continuation $\epsilon'_j$ from running the fixed point argument around new solitary waves $U_j$ is finite, but we can at least consider the factors affecting the length of the interval of extension, which will shrink as the operator norm of $P_{U_1}$ as an operator from $X$ to $X$ grows.
We unpack the constants affecting this operator norm.

The operator norm of $P_{U_1}$ contains a term proportional to the reciprocal of the smallest (nontranslational) eigenvalue $\lambda_m$, which appears in estimating the norm of $\cL_{U_1}^{-1}$ as an operator from $L^2$ to $L^2$.
It also contains a term proportional to the constant in the elliptic regularity estimate for $\cL_{U_1}^{-1}$, as well as additional terms depending on the size of $U_1$ which appear when we upgrade the $L^2$ estimates to higher regularity.
As in Section~\ref{s:ellipticity}, we can write $\cL_{U_1}$ paradifferentially, and, using the estimates of Lemmas~\ref{l:highHigh}-\ref{l:paraproductDifference}, view the difference
\begin{equation*}
    L_{U_1}^0\cL_{U_1}-(g+c_1\gamma-c_1^2|D|+2gU|D|)
\end{equation*}
as an order $0$ operator satisfying the estimate
\begin{equation*}
    \|L_{U_1}^0w\|_{H^s}\lesssim \|U_1\|_{H^2}\|w\|_{H^s}
\end{equation*}
for all $s\ge 0$.
The constant in the elliptic regularity estimate for $(g+c_1\gamma-c_1^2|D|+2gU|D|)U$ is proportional to
\begin{equation*}
    \left(\frac{c_1^2}{2g}-\sup_{\alpha}U_1(\alpha)\right)^{-1}
\end{equation*}
As in~\eqref{e:ellipticRegularityStart}-\eqref{e:ellipticRegularityEnd}, with $(L_{U_1}+\gamma^2 U_1)(u-v)$ replacing $\rho (u-v)$, the operator norm of $P_{U_1}$ picks up terms proportional to $\|U_1\|_{H^2}$ and the elliptic regularity constant $\left(\frac{c_1^2}{2g}-\sup_{\alpha}U_1(\alpha)\right)^{-1}$.

Collecting the various terms that influence how small $\epsilon'$ and $\delta'$ can be, we see that as the velocity approaches the maximal velocity $c_M$, if neither the maximal height condition (1) nor the spectral condition (2) are being approached, we could also have
\begin{enumerate}\setcounter{enumi}{2}
    \item The solitary wave $U$ becomes too large in the weighted Sobolev space $X$.
\end{enumerate}

As long as none of the maximal height condition for non-ellipticity (1), the spectral condition for non-invertibility (2), or the norm blowup condition (3) occur, we can run the fixed point argument and extend the range of velocities to a small open interval around $c_1$.
\end{proof}

\begin{remark}
    It is possible that condition (3) above can be relaxed to a smaller set of norms, possibly just the $L^\infty$ norms of $U_1$ and perhaps $|D|U_1$, but we do not pursue a more detailed and quantitative study of the extension problem here.
\end{remark}

\begin{proof}[Proof of Theorem~\ref{t:maximal}]
    The initial construction from Theorem~\ref{t:TheoremOno} and the local continuation result of Proposition~\ref{p:localExtension} guarantees the existence of an open interval of velocities $\left(c_M,-\frac g \gamma\right)$ for $\gamma>0$ or $\left(-\frac g \gamma,c_M\right)$ for $\gamma<0$ for which a solitary wave exists.
    Moreover, the fixed point construction shows that the map $c\mapsto U_c$ from a possible velocity $c$ to the corresponding solitary wave profile $U_c$ is continuous  from $c$ to $X$.
    
    If the interval for which solitary waves exist is finite, then this requires the length $\epsilon'_c$ of the interval of extension to be decreasing, for if $\epsilon'_c$ was bounded below then the velocity could be extended to $\pm \infty$.
    This means that if $c_M>-\infty$ for $\gamma>0$ (or $c_M<\infty$ for $\gamma<0$), there must be a sequence   $\{c_n\}$ approaching $c_M$ at which $\epsilon'_{c_n}$ is approaching $0$.
    
    By the previous proposition, we know that $\epsilon'_{c_n}$ is inversely proportional to all of \begin{enumerate}
        \item the norm of the solitary wave profile $U_n$ in $X$;

        \item the distance between the height $\sup_\alpha U_n$ of $U_n$ and the maximum height $\frac{c_n^2}{2g}$; and

        \item the absolute value of the smallest eigenvalue of $\cL_{U_n}$.
    \end{enumerate}
    This means that if the interval of possible velocities is finite, one of the conditions (2)-(4) of Theorem~\ref{t:maximal} must occur.
\end{proof}

\bibliographystyle{plain}
\bibliography{refs}
\end{document}